\documentclass[12pt,reqno,a4paper]{amsart}

\usepackage[margin=25truemm]{geometry}

\usepackage{amssymb}
\usepackage{stmaryrd}

\usepackage{graphicx,xcolor}
\usepackage{tikz}

\usepackage{amsfonts}
\usepackage{mathrsfs}
\usepackage{bm}
\usepackage{bbm}

\usepackage{xcolor}

\allowdisplaybreaks[4]


\hypersetup{
 setpagesize=false,
 hypertexnames=false,
 hyperfootnotes=true,
 bookmarksnumbered=true,%
 bookmarksopen=true,%
 colorlinks=true,%
 linkcolor=blue,
 citecolor=blue,
}

\usepackage[shortlabels]{enumitem}

\theoremstyle{plain}
    \newtheorem{theorem}{Theorem}[section]
    \newtheorem*{theorem*}{Theorem}
    \newtheorem{lemma}[theorem]{Lemma}
    \newtheorem{proposition}[theorem]{Proposition}
    \newtheorem{corollary}[theorem]{Corollary}

\theoremstyle{definition}
    \newtheorem{definition}{Definition}[section]
    \newtheorem{remark}{Remark}[section]
    

\numberwithin{equation}{section}

\newcommand{\even}{\mathord{\mathrm{even}}}
\newcommand{\odd}{\mathord{\mathrm{odd}}}

\DeclareMathOperator{\Span}{Span}

\DeclareMathOperator{\diag}{diag}




\let\Re\relax
\DeclareMathOperator{\Re}{Re}
\let\Im\relax
\DeclareMathOperator{\Im}{Im}

\DeclareMathOperator{\sech}{sech}




\newcommand{\eps}{\varepsilon}
\newcommand{\C}{\mathbb{C}}
\newcommand{\N}{\mathbb{N}}

\newcommand{\R}{\mathbb{R}}

\newcommand{\boD}{\mathcal{D}}

\newcommand{\boL}{\mathcal{L}}

\newcommand{\boR}{\mathcal{R}}

\newcommand{\boV}{\mathcal{V}}

\newcommand{\boY}{\mathcal{Y}}
\newcommand{\boZ}{\mathcal{Z}}

\newcommand{\la}{\ensuremath{\lambda}}

\newcommand{\te}{\ensuremath{\theta}}
\newcommand{\al}{\ensuremath{\alpha}}
\newcommand{\be}{\ensuremath{\beta}}
\newcommand{\gam}{\ensuremath{\gamma}}

\newcommand{\ka}{\ensuremath{\kappa}}
\newcommand{\si}{\ensuremath{\sigma}}
\newcommand{\Si}{\ensuremath{\Sigma}}
\newcommand{\om}{\ensuremath{\omega}}


\makeatletter												%
  \@addtoreset{equation}{section}						%
\makeatother

\newcommand{\nor}[2]{\left\| {#1} \right\|_{#2}}		
\newcommand{\ovl}[1]{\overline{#1}}					
		
\newcommand{\inp}[2]{\langle {#1} , {#2} \rangle }	

\newcommand{\Del}{{\Delta}}							
\newcommand{\del}{{\delta}}								
\newcommand{\rd}{{\partial}}								
\newcommand{\nab}{{\nabla}}							

\newcommand{\wto}{\rightharpoonup}

\newcommand{\boa}{{\bm{a}}}
\newcommand{\boy}{{\bm{y}}}
\newcommand{\boz}{{\mathbf{z}}}
\newcommand{\bal}{{\bm{\al}}}
\newcommand{\bbe}{{\bm{\be}}}
\newcommand{\bla}{{\bm{\la}}}

\newcommand{\bmu}{{\bm{\mu}}}
\newcommand{\bnu}{{\bm{\nu}}}

\newcommand{\brho}{{\bm{\rho}}}

\begin{document}

\title[Multi-solitons for delta NLS]{Multi-solitons for the nonlinear Schr\"{o}dinger equation with repulsive Dirac delta potential}

\author[S. Gustafson]{Stephen Gustafson}
\address[S. Gustafson]{University of British Columbia, 1984 Mathematics Rd., Vancouver, V6T1Z2, Canada.}
\email{gustaf@math.ubc.ca}

\author[T. Inui]{Takahisa Inui}
\address[T. Inui]{Department of Mathematics, Graduate School of Science, Osaka University, Toyonaka, Osaka, 560-0043, Japan.
}
\email{inui@math.sci.osaka-u.ac.jp}

\author[I. Shimizu]{Ikkei Shimizu}
\address[I. Shimizu]{Department of Mathematics, Graduate School of Science, Osaka University, Toyonaka, Osaka, 560-0043, Japan.}
\email{shimizu@cr.math.sci.osaka-u.ac.jp}

\date{\today}
\maketitle

\begin{abstract}
We prove the existence of multi-soliton solutions for the nonlinear Schr\"{o}dinger equation with repulsive Dirac delta potential and $L^2$-supercritical focusing nonlinear term. Our main contribution is to treat the unmoving part of the multi-solitons, which is the ground state of the equation. The linearized operator around it has two unstable eigenvalues. This is the main difference from NLS without potential, whose existence of multi-solitons is investigated by C\^{o}te, Martel, and Merle (2011). 
\end{abstract}

\tableofcontents


\section{Introduction}

We consider the following nonlinear Schr\"{o}dinger eqaution with repulsive Dirac delta potential in one dimension: 
\begin{align}
\label{deltaNLS}
\tag{$\delta$NLS}
	i \partial_{t} u + \partial_{x}^{2} u + \gamma \delta u +|u|^{p-1}u=0,
	\quad (t,x) \in \mathbb{R} \times \mathbb{R},
\end{align}
where $\gamma <0$, $\delta$ denotes the Dirac delta at the origin, and $p>1$. The condition $\gamma<0$ means that the potential is repulsive. From the view point of the scaling, we say that the equation is $L^2$-subcritical when $1<p<5$, $L^2$-critical when $p=5$, and $L^2$-supercritical when $p>5$. We mainly consider the $L^2$-supercritical case in the present paper. 
Here, the Schr\"{o}dinger operator $-\Delta_{\gamma}:=-\partial_{x}^{2} -\gamma \delta $ is defined by 
\begin{align*}
	- \Delta_{\gamma}f &:=- \partial_{x}^{2}f \text{ for } f \in \mathcal{D}(-\Delta_{\gamma}),
	\\
	\mathcal{D}(- \Delta_{\gamma}) &:= \{ f \in H^{1}(\mathbb{R}) \cap H^{2}(\mathbb{R}\setminus\{0\}): f'(0+) - f'(0-)=- \gamma f(0)\}. 
\end{align*}
Since $\gamma<0$, $-\Delta_{\gamma}$ is a nonnegative self-adjoint operator on $L^{2}(\mathbb{R})$ (see \cite{AGHKH88} for more details), which implies that the linear Schr\"{o}dinger propagator $e^{it\Delta_{\gamma}}$ is well-defined on $L^{2}(\mathbb{R})$ by Stone's theorem. The operator $-\Delta_{\gamma}$ may also be defined by its quadratic form:
\begin{align*}
	q( f , g) := \int_{\mathbb{R}} f'(x)\overline{g'(x)} dx - \gamma f(0)\overline{g(0)} 
\end{align*} 
for $f,g \in H^{1}(\mathbb{R})$. See e.g. \cite{Kat95,Sch12}. 

It is known that the equation \eqref{deltaNLS} is locally well-posed in the energy space $H^{1}(\mathbb{R})$. Moreover, the energy $E_\gamma$ and mass $M$, which are defined by
\begin{align*}
	E_{\gamma}(f)&:= \frac{1}{2}\|\partial_{x} f\|_{L^{2}}^{2} - \frac{\gamma}{2} |f(0)|^{2}  - \frac{1}{p+1}\|f\|_{L^{p+1}}^{p+1},
	\\
	M(f)&:= \|f\|_{L^{2}}^{2},
\end{align*}
are conserved (see \cite{GHW04,FOO08}). 

It is also known that there exists a ground state standing wave solution $e^{i\omega t}Q_{\omega,\gamma}$, where $Q_{\omega,\gamma}$ is the positive function defined by 
\begin{align*}
	Q_{\omega,\gamma}(x):= \left[ \frac{(p+1)\omega}{2} \sech^{2}\left\{ \frac{(p-1)\sqrt{\omega}}{2}|x| + \tanh^{-1}\left( \frac{\gamma}{2\sqrt{\omega}} \right) \right\} \right]^{\frac{1}{p-1}}
\end{align*}
for $\omega>\gamma^2/4$. See e.g. \cite{FuJe08}. 
This function $Q_{\omega,\gamma}$ is a unique positive solution of 
\begin{align*}
	-\partial_{x}^{2}Q_{\omega,\gamma} -\gamma \delta Q_{\omega,\gamma} + \omega Q_{\omega,\gamma} =Q_{\omega,\gamma}^{p}.
\end{align*}

We are interested in the existence of multi-soliton solutions for \eqref{deltaNLS}. First, we recall results for NLS without potential. 
We consider the following nonlinear Schr\"{o}dinger equation:
\begin{align}
\label{NLS}
	i\partial_t u +\partial_x^2 u +|u|^{p-1}u=0, \quad (t,x) \in \mathbb{R} \times  \mathbb{R},
\end{align}
where $p>5$, which is \eqref{deltaNLS} with $\gamma=0$. 
The function $e^{i\omega t}Q_{\omega,0}$ for $\omega>0$ is a ground state standing wave solution of the equation \eqref{NLS}. The equation \eqref{NLS} is gauge invariant, translation invariant, and Galilean invariant. That is, if $u$ is a solution, then 
\begin{align*}
	u_1(t,x) := u(t, x-v_1 t -x_1)e^{i\left(\frac{1}{2}v_1 x - \frac{1}{4}v_1^2 t + \theta_1  \right)}
\end{align*}
is also a solution for $v_1,x_1,\theta_1 \in \mathbb{R}$. 
Therefore, for given $v_1,x_1,\theta_1 \in \mathbb{R}$ and $\omega_1 >0$, the function 
\begin{align*}
	\mathcal{R}_1(t,x):= Q_{\omega_1,0}(x-v_1 t -x_1)e^{i\left(\frac{1}{2}v_1 x - \frac{1}{4}v_1^2 t +\omega_1 t + \theta_1  \right)}
\end{align*}
is a one-soliton solution of the equation \eqref{NLS}. The following theorem showed existence of a multi-soliton solution. 

\begin{theorem*}[\cite{CMM11}]
Assume $p>5$. Let $K\geq 2$, $v_1,...,v_K,x_1,...,x_K,\theta_1,...,\theta_K \in \mathbb{R}$, and $\omega_1,...,\omega_K >0$. We assume that $v_1<v_2<...<v_K$. Define 
\begin{align*}
	\mathcal{R}(t,x)&:=\sum_{k=1}^{K}\mathcal{R}_k(t,x),
	\\
	\mathcal{R}_k(t,x)&:= Q_{\omega_k,0}(x-v_k t -x_k)e^{i\left(\frac{1}{2}v_k x - \frac{1}{4}v_k^2 t +\omega_k t + \theta_k  \right)}.
\end{align*}
Then there exists $T_0 \in \mathbb{R}$, $C,c_0>0$, and a solution $u\in C([T_0,\infty);H^1(\mathbb{R}))$ to \eqref{NLS} such that 
\begin{align*}
	\|u(t) - \mathcal{R}(t)\|_{H^1} \leq C e^{-c_0 t}
\end{align*}
for $t \in [T_0,\infty)$. 
\end{theorem*}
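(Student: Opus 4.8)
The plan is to follow the now-standard compactness scheme for constructing multi-solitons in a regime where the linearized flow is unstable, as developed by C\^ote--Martel--Merle: build, for an increasing sequence of times $T_n\to+\infty$, solutions $u_n$ on $[T_0,T_n]$ that remain within an exponentially shrinking tube around $\mathcal R$, and then pass to a weak limit. First I would record that $\mathcal R=\sum_k\mathcal R_k$ is an approximate solution: since $v_1<\dots<v_K$ the bumps separate linearly in $t$, so $i\partial_t\mathcal R+\partial_x^2\mathcal R+|\mathcal R|^{p-1}\mathcal R$ has $H^1$-norm $\lesssim e^{-2\sigma_0 t}$, where $\sigma_0>0$ is governed by $\min_k\sqrt{\omega_k}$ and $\min_{i\ne j}|v_i-v_j|$; the rate $c_0$ of the theorem will be a small fraction of $\min_k\{\sigma_0,e_k\}$, where $\pm e_k$ are the unstable eigenvalues introduced below. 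Local well-posedness in $H^1(\R)$ (valid for all $p>1$ in one dimension, since $H^1\hookrightarrow L^\infty$) lets us solve backward from $T_n$.

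As long as $u_n(t)$ remains in a small neighborhood of $\mathcal R(t)$, decompose $u_n(t)=\mathcal R_{\vec\Gamma(t)}(t)+\eps(t)$, where $\mathcal R_{\vec\Gamma}$ is obtained by promoting the $4K$ parameters $(\omega_k,v_k,x_k,\theta_k)$ to functions of $t$, chosen (by the implicit function theorem) so that $\eps(t)$ is orthogonal to the $4K$ directions spanning the generalized kernels of the linearized operators $\mathcal L_k$ around the $k$-th profile. In the $L^2$-supercritical regime $\partial_\omega M(Q_{\omega_k,0})<0$, so each $\mathcal L_k$ carries, besides its generalized kernel, exactly one pair of real eigenvalues $\pm e_k$ with $e_k>0$; write $\mathcal Y_k^\pm(t)$ for the (transported) eigenfunctions and $a_k^\pm(t):=(\eps(t),\mathcal Z_k^\pm(t))$ for the coefficients of $\eps$ along them, $\mathcal Z_k^\pm$ biorthogonal to $\mathcal Y_k^\pm$. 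Under a bootstrap assumption of the form $\|\eps(t)\|_{H^1}+\sum_k(|a_k^+(t)|+|a_k^-(t)|)+\sum_k(|\omega_k(t)-\omega_k|+\dots)\le e^{-c_0 t}$ on a subinterval $[t^*,T_n]$, the usual manipulations yield: (i) modulation bounds $|\tfrac{d}{dt}\omega_k|+|\tfrac{d}{dt}v_k|+\dots\lesssim\|\eps(t)\|_{H^1}+e^{-2\sigma_0 t}$; (ii) a Lyapunov functional $\mathcal G(t)$ built from the energy, the mass, and localized momenta (cut off between consecutive bumps) that is coercive modulo the bad modes, $\mathcal G(t)\ge\tfrac12\|\eps(t)\|_{H^1}^2-C\sum_k\big(a_k^+(t)^2+a_k^-(t)^2\big)$, with $|\tfrac{d}{dt}\mathcal G(t)|\lesssim e^{-\sigma_0 t}\|\eps(t)\|_{H^1}^2+e^{-3\sigma_0 t}$; and (iii) the modal ODEs $\big|\tfrac{d}{dt}a_k^\pm(t)\mp e_k a_k^\pm(t)\big|\lesssim e^{-\sigma_0 t}\|\eps(t)\|_{H^1}+e^{-2\sigma_0 t}$.

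Next I would close these estimates by backward integration. The forward-unstable modes $a_k^+$ are contractive under backward evolution and may be set to $0$ at $T_n$, giving $|a_k^+(t)|\lesssim e^{-2\sigma_0 t}$; integrating $\mathcal G$ backward from $T_n$ and using its coercivity then reduces matters to controlling the forward-stable modes $a_k^-$. These grow under backward integration, so they cannot simply be set to $0$; instead take final data $u_n(T_n)=\mathcal R(T_n)+\sum_k b_k\,\mathcal Y_k^-(T_n)$ with $\vec b=(b_1,\dots,b_K)$ ranging over a ball $B_n\subset\R^K$ of radius $\sim e^{-c_0 T_n}$, so that $\eps(T_n)=\sum_k b_k\mathcal Y_k^-(T_n)$ and $a_k^-(T_n)=b_k$. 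If, for every $\vec b\in B_n$, the solution exited the tube $\{\|\eps\|_{H^1}\le e^{-c_0 t}\}$ at some first time $t^{**}(\vec b)\in(T_0,T_n)$, then the estimates above force the exit to occur through the $a^-$-coordinates saturating the budget, and the sign $+e_k$ in (iii) makes the rescaled quantity $\sum_k a_k^-(t)^2 e^{2c_0(T_n-t)}$ strictly increasing there (here one uses $c_0<\min_k e_k$); hence $\vec b\mapsto\big(e^{c_0(T_n-t^{**})}a_k^-(t^{**})\big)_k$ would be a continuous retraction of $B_n$ onto its boundary sphere, which is impossible. Therefore some $\vec b=\vec b^{\,n}$ yields a solution $u_n$ on all of $[T_0,T_n]$ with $\|u_n(t)-\mathcal R(t)\|_{H^1}\le Ce^{-c_0 t}$, $C$ independent of $n$.

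Finally, $(u_n(T_0))_n$ is bounded in $H^1(\R)$; extracting a weakly convergent subsequence $u_n(T_0)\rightharpoonup u_\infty(T_0)$ and using local well-posedness together with weak-$H^1$ continuity of the flow (plus strong $L^2_{\mathrm{loc}}$ convergence to pass to the limit in the nonlinearity), the solution $u$ issued from $u_\infty(T_0)$ satisfies, for each fixed $t\ge T_0$, $\|u(t)-\mathcal R(t)\|_{H^1}\le\liminf_n\|u_n(t)-\mathcal R(t)\|_{H^1}\le Ce^{-c_0 t}$, and $u\in C([T_0,\infty);H^1(\R))$, which is the assertion. The main obstacle is the middle two steps: because the unstable eigenvalue destroys coercivity of the linearized energy, one must simultaneously engineer a localized Lyapunov functional with both the right coercivity-modulo-bad-modes and the right sign of its time derivative, fit the interaction, modulation-drift, and nonlinear errors into the exponential budget so the bootstrap closes, and run the finite-dimensional topological shooting that removes the backward-unstable modes. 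It is precisely the number of such modes attached to the stationary component that changes in the present paper, as its abstract notes.
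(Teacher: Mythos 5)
Your proposal follows essentially the same route as the paper's method (the C\^ote--Martel--Merle scheme, which the paper itself adapts in Section 3 for the delta potential): backward resolution from a sequence of final times $T_n$, modulation around the soliton sum, a localized energy--mass--momentum functional that is coercive modulo the exponentially unstable modes, direct integration of the backward-contracting modes, a Brouwer no-retraction shooting argument for the backward-growing modes, and a compactness limit, so it is correct in outline. The only deviations are minor: the paper modulates just translations and phases (the directions $\partial_xQ$, $iQ$, $Y^\pm$ enter through the coercivity statement rather than through a full $4K$-parameter generalized-kernel orthogonality), the time derivative of the localized functional is only bounded by $(C/L)\|\varepsilon\|_{H^1}^2+Ce^{-3c_0t}$ with $L$ the cutoff scale (the bootstrap closes by taking $L$ large), not by an exponentially small multiple of $\|\varepsilon\|_{H^1}^2$ as you assert, and the final limit is justified via the uniform spatial decay of $u_n(T_0)$ together with local well-posedness in $H^s$, $s<1$, rather than by invoking weak continuity of the $H^1$ flow.
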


\begin{remark}
In fact, for NLS with $H^1$-subcritical nonlinearity in general dimensions, the existence of the multi-solitons is known. See \cite{Mer90,MaMe06,CMM11}. 
\end{remark}

We turn now to equation \eqref{deltaNLS}. We will construct multi-soliton solutions for \eqref{deltaNLS}. If $v \neq 0$, then the interaction between the Dirac delta potential at the origin and the moving one-soliton 
\begin{align*}
	Q_{\omega_1,0}(x-v_1 t -x_1)e^{i\left(\frac{1}{2}v_1 x - \frac{1}{4}v_1^2 t +\omega_1 t + \theta_1  \right)}
\end{align*}
are exponentially decaying in time. Thus we expect that we can construct multi-soliton for \eqref{deltaNLS} if their solitons have non-zero velocity. Moreover, if one of them has zero velocity, then we expect that the multi-soliton with $Q_{\omega,\gamma}$ replacing $Q_{\omega,0}$ can be constructed.


\subsection{Main results}

Before giving our results, we introduce some notations. 
Let $K$ be a positive integer. 
We set $\llbracket 1, K\rrbracket := \{1,2,...,K\}$. For a given $K$, we let 
\begin{align*}
	v_1, ....,v_K \in \mathbb{R},
	\quad
	\omega_{1},,...,\omega_{K}\in (0,\infty),
	\quad
	x_{1},,...,x_{K}\in \mathbb{R},
	\quad 
	\theta_{1},...,\theta_{K}\in \mathbb{R}.
\end{align*}
We assume that $v_k \neq v_j$ for any distinct $j,k \in \llbracket 1,K \rrbracket$. Therefore, in what follows, we assume that 
\begin{align*}
	v_1<v_2< ....<v_K 
\end{align*}
by renumbering. 
We additionally assume that $\omega_{k_0} > \gamma^2/4$ and $x_{k_0} =0$ if $k_0 \in \llbracket 1, K \rrbracket$ satisfies $v_{k_0} =0$. 
For simplicity of the notation, we set 
\begin{align*}
	\gamma_k:=
	\begin{cases}
	\gamma & \text{ for } k \text{ such that }v_k=0,
	\\
	0 & \text{ for } k \text{ such that }v_k\neq 0. 
	\end{cases}
\end{align*}
We define 
\begin{align*}
	\mathcal{R}(t,x)&:=\sum_{k=1}^{K}\mathcal{R}_k(t,x),
	\\
	\mathcal{R}_k(t,x)&:= Q_{\omega_k,\gamma_k}(x-v_k t -x_k)e^{i\left(\frac{1}{2}v_k x - \frac{1}{4}v_k^2 t +\omega_k t + \theta_k  \right)}.
\end{align*}
We note that if $k_0 \in \llbracket 1, K \rrbracket$ satisfies $v_{k_0} =0$ then $\mathcal{R}_{k_0}(t,x)= Q_{\omega_{k_0},\gamma}(x)e^{i(\omega_{k_0}t+\theta_{k_0})}$. 

The following  is our main theorem. 

\begin{theorem}
\label{thm1.1}
Let $p>5$ and $K\geq 2$. We assume that there exists $k_{0} \in \llbracket 1,K\rrbracket$ such that $v_{k_{0}}=0$ (and additionally that $\omega_{k_{0}}>\gamma^{2}/4$ and $x_{k_{0}} =0$). 
Then, there exist $T_{0}\in \mathbb{R}$, $C,c_{0}>0$ and a solution $u \in C([T_{0},\infty): H^{1}(\mathbb{R}))$ to \eqref{deltaNLS} such that 
\begin{align*}
	\| u(t) -\mathcal{R}(t)\|_{H^{1}} \leq C e^{-c_{0}t}
\end{align*}
for all $t\in [T_{0},\infty)$. 
\end{theorem}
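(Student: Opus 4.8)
The plan is to follow the classical compactness/energy-method strategy for constructing multi-solitons (as in \cite{Mer90,MaMe06,CMM11}), adapted to handle the stationary soliton built on the ground state $Q_{\omega_{k_0},\gamma}$ of $-\Delta_\gamma$. First I would fix an increasing sequence of times $T_n \to \infty$ and solve the equation \eqref{deltaNLS} backward in time from $t = T_n$ with final data $u_n(T_n) = \mathcal{R}(T_n)$. The goal is then to prove the uniform backward estimate $\|u_n(t) - \mathcal{R}(t)\|_{H^1} \leq C e^{-c_0 t}$ on a time interval $[T_0, T_n]$ with $T_0$ and $C$ independent of $n$; passing to the limit $n \to \infty$ (via local well-posedness in $H^1$ and weak compactness) then yields the desired solution $u$. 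This reduces everything to a uniform-in-$n$ bootstrap estimate on the error $\varepsilon_n := u_n - \mathcal{R}$, carried out on the maximal interval where $\|\varepsilon_n(t)\|_{H^1} \leq e^{-(c_0/2) t}$, say.

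The core of the argument is a modulated energy-type functional. For the moving solitons $\mathcal{R}_k$, $k \neq k_0$, one uses the standard Galilean-boosted localized conservation laws: introduce smooth cutoffs $\phi_k(t,x)$ separating the spatial regions around each soliton center $x = v_k t + x_k$ (the separation grows linearly in $t$ because the $v_k$ are distinct), and form a localized energy-mass-momentum combination $\mathcal{F}_k(t) = E_\gamma^{\loc} + \tfrac{v_k^2}{4} M^{\loc} - v_k P^{\loc}$ associated to the $k$-th piece, plus the analogous non-moving functional $E_\gamma + \omega_{k_0} M$ near the origin for $k = k_0$. Since the potential $\gamma\delta$ is located at the origin, for $k \neq k_0$ the relevant soliton profile $Q_{\omega_k,0}$ lives far from the support of $\delta$, and the contribution of the potential term to $\mathcal{F}_k$ and to its time derivative is exponentially small — this is exactly the heuristic flagged in the introduction. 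Summing $\mathcal{F}(t) = \sum_k \mathcal{F}_k(t)$, one shows $\mathcal{F}(t)$ is coercive in $\varepsilon_n$ modulo the finitely many unstable/zero directions, and that $\frac{d}{dt}\mathcal{F}(t)$ is controlled by $e^{-c t}$ plus quadratic-in-$\varepsilon$ terms; integrating backward from $T_n$ closes the estimate.

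The main obstacle — and, per the abstract, the main novelty over \cite{CMM11} — is the non-stationary soliton $\mathcal{R}_{k_0}$: the linearized operator $L_{\omega_{k_0},\gamma}$ around $Q_{\omega_{k_0},\gamma}$ in the $L^2$-supercritical regime has \emph{two} unstable (real) eigenvalues $\pm e_0$ in addition to the generalized kernel coming from the gauge symmetry (translation symmetry is broken by the potential, which actually simplifies the kernel but does not remove the instability). To control the unstable directions one needs a topological/shooting argument: introduce modulation parameters $(\omega_{k_0}(t), \theta_{k_0}(t))$ — and, for the moving solitons, the full set $(\omega_k, v_k, x_k, \theta_k)(t)$ — fixed by orthogonality conditions against the generalized kernel, and then project $\varepsilon_n$ onto the one-dimensional unstable subspaces, obtaining scalar quantities $a_n^\pm(t)$ satisfying differential inequalities of the form $|\frac{d}{dt} a_n^\pm \mp e_0 a_n^\pm| \lesssim e^{-c t} + \|\varepsilon_n\|_{H^1}^2$. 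The stable component $a_n^-$ is automatically small going backward; the unstable component $a_n^+$ is handled by a Brouwer-type fixed-point argument, choosing the final data perturbation $u_n(T_n) = \mathcal{R}(T_n) + \sum_{\text{unstable}} b_n^\pm$ within a small ball so that the flow lands on the stable manifold, i.e. $a_n^+(T_0) = 0$. One must check the final data perturbations $b_n^\pm$ are themselves $O(e^{-c_0 T_n})$ so they vanish in the limit and do not destroy the convergence $u_n(T_n) \to \mathcal{R}(\infty)$. Assembling the coercivity of $\mathcal{F}$, the modulation estimates, and the topological control of $a_n^\pm$ into a single consistent bootstrap on $[T_0, T_n]$, uniformly in $n$, is the technical heart of the proof.
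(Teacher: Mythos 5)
Your overall strategy is essentially the paper's: backward integration from times $T_n\to\infty$, a uniform backward estimate built from localized, Galilean-boosted mass/momentum/energy functionals (with the delta--moving-soliton interaction exponentially small), modulation by orthogonality conditions, coercivity of the linearized quadratic form modulo finitely many directions, and a Brouwer-type (topological) choice of final-data components of size $O(e^{-\frac32 c_0 T_n})$ along the unstable directions (this is exactly Propositions \ref{ube} and \ref{P3.3}). One correction on the spectral input: for $\gamma<0$ the linearized operator around $Q_{\omega_{k_0},\gamma}$ has \emph{two distinct} positive real eigenvalues $\mathfrak{y}_{\omega,\gamma}$ and $\mathfrak{z}_{\omega,\gamma}$ (Proposition \ref{xP2.1}), with even and odd eigenfunctions respectively, not a single pair $\pm e_0$; the odd unstable mode is precisely what the broken translation invariance produces, and it is why the shooting argument runs over $K+1$ unstable parameters $(\boa^+,b^+)$ and why the modulation at $k_0$ carries a phase but no translation parameter.

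The genuine gap is the limiting step. ``Local well-posedness in $H^1$ and weak compactness'' does not suffice to pass to the limit: weak $H^1$ convergence of the data $u_n(T_0)$ gives no control of the flow map, so by itself it does not yield $u_n(t)\rightharpoonup u_*(t)$ for $t>T_0$, which is what transfers the bound $\|u_n(t)-\mathcal{R}(t)\|_{H^1}\le Ce^{-c_0 t}$ to the limit. The paper's argument uses the uniform spatial decay of $u_n(T_0)$ (a consequence of the uniform backward estimate) to upgrade weak $H^1$ convergence to \emph{strong} $H^s$ convergence for $0<s<1$, and then invokes local well-posedness with continuous dependence of \eqref{deltaNLS} in $H^s$, $1/2<s<1$, to propagate this strong convergence forward in time, finally taking weak $H^1$ limits at each fixed $t$. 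That subcritical $H^s$ theory for the delta potential is not off-the-shelf: the paper proves it in Appendix \ref{appB} via the norm equivalence $\mathcal{D}((-\Delta_\gamma+\lambda)^{s/2})=H^s$ for $s\in(0,3/2)$. Your proposal needs this ingredient (or some substitute stability-in-a-weaker-norm statement) to close; as written, the compactness step would fail.
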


\begin{remark}
We do not need to consider the case $K=1$ here since the unmoving soliton $\mathcal{R}_1$ is a ground state standing wave solution to \eqref{deltaNLS}. 
\end{remark}

The following theorem is also obtained by a similar argument.

\begin{theorem}
\label{thm1.2}
Let $p>5$ and $K\geq 1$. We assume that $v_{k}\neq 0$ for any $k\in \llbracket 1,K\rrbracket$. 
Then, there exist $T_{0}\in \mathbb{R}$, $C,c_{0}>0$ and a solution $u \in C([T_{0},\infty): H^{1}(\mathbb{R}))$ to \eqref{deltaNLS} such that 
\begin{align*}
	\| u(t) -\mathcal{R}(t)\|_{H^{1}} \leq C e^{-c_{0}t}
\end{align*}
for all $t\in [T_{0},\infty)$. 
\end{theorem}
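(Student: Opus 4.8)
\emph{The plan is to treat Theorem~\ref{thm1.2} as the ``decoupled'' case of Theorem~\ref{thm1.1}, and to reduce it essentially to the argument of C\^ote--Martel--Merle \cite{CMM11} for \eqref{NLS}.} The point is that since every $v_k\neq 0$, each profile $\mathcal{R}_k$ travels away from the origin at a linear rate, so the Dirac delta is felt only through exponentially small tails; no soliton has to be analyzed via the $Q_{\omega,\gamma}$ ground state, which is exactly the simplification relative to Theorem~\ref{thm1.1}. I would run the classical compactness scheme: fix an increasing sequence $T_n\uparrow\infty$, let $u_n$ solve \eqref{deltaNLS} with data at $t=T_n$ close to $\mathcal{R}(T_n)$ (see below), prove \emph{uniform in $n$} estimates $\|u_n(t)-\mathcal{R}(t)\|_{H^1}\le Ce^{-c_0t}$ on $[T_0,T_n]$ by a bootstrap, and then extract a limit $u$ on $[T_0,\infty)$ using $H^1$ local well-posedness together with weak compactness, the uniform decay guaranteeing no loss in the limit.

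\emph{The error term.} Write $\mathcal{E}:=i\partial_t\mathcal{R}+\partial_x^2\mathcal{R}+\gamma\delta\mathcal{R}+|\mathcal{R}|^{p-1}\mathcal{R}$. Each moving profile $\mathcal{R}_k$ is an exact soliton of \eqref{NLS}, so
\[
\mathcal{E}=\gamma\,\mathcal{R}(t,0)\,\delta+\Big(|\mathcal{R}|^{p-1}\mathcal{R}-\sum_{k=1}^{K}|\mathcal{R}_k|^{p-1}\mathcal{R}_k\Big).
\]
The nonlinear remainder is, up to exponentially small errors, concentrated where two trajectories $x=v_kt+x_k$ meet, which for $v_1<\dots<v_K$ happens only on a bounded time interval, hence it is $O(e^{-ct})$ in $L^2$. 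For the potential term, $\mathcal{R}(t,0)=\sum_k Q_{\omega_k,0}(-v_kt-x_k)e^{i(\cdots)}$ and each summand is $O(e^{-ct})$ because $v_k\neq0$ and $Q_{\omega_k,0}$ decays exponentially; testing against $H^1$ functions (which are continuous in one dimension) gives $\|\mathcal{E}(t)\|_{H^{-1}}\lesssim e^{-ct}$. This is the only place the delta potential appears, and it appears at the harmless exponential level.

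\emph{Modulation and localized action.} Following \cite{CMM11}, decompose $u_n(t)=\mathcal{R}(t)+\varepsilon(t)$ after replacing the fixed parameters $(\omega_k,v_k,x_k,\theta_k)$ by modulated ones, chosen so that $\varepsilon(t)$ satisfies the orthogonality conditions annihilating the generalized kernel of the operator $\mathcal{L}_k$ linearized around each $Q_{\omega_k,0}$. Using smooth cutoffs $\phi_k$ following the separation of the trajectories, I would form the localized action
\[
\mathcal{G}(t)=\sum_{k=1}^{K}\Big(E^{\phi_k}_0(u_n(t))+\omega_k M^{\phi_k}(u_n(t))+v_k P^{\phi_k}(u_n(t))\Big),
\]
with $E^{\phi_k}_0,M^{\phi_k},P^{\phi_k}$ the $\phi_k$-localized energy, mass and momentum of \eqref{NLS}. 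For $t$ large the support of $\phi_k$ is disjoint from a fixed neighborhood of $x=0$, so \emph{no} $\gamma|u_n(0)|^2$ term enters $\mathcal{G}$, and both the functional and its virial-type coercivity coincide with those in \cite{CMM11}. One then gets, modulo the unstable directions, $\|\varepsilon(t)\|_{H^1}^2\lesssim \mathcal{G}(t)-\mathcal{G}(T_n)+e^{-cT_n}+\sum_k|a_k(t)|^2$ together with the almost-monotonicity $|\tfrac{d}{dt}\mathcal{G}(t)|\lesssim e^{-ct}$ (the $e^{-ct}$ coming from $\mathcal{E}$ and from the exponentially small flux of mass/momentum between separating bumps), where $a_k(t)$ is the projection of $\varepsilon(t)$ onto the unstable eigenmode of $\mathcal{L}_k$.

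\emph{Topological step, conclusion, and main obstacle.} Since $p>5$, each $\mathcal{L}_k$ has real eigenvalues $\pm e_k$ with $e_k>0$ and eigenfunctions $\mathcal{Y}_k^{\pm}$, and the scalar parameters obey $\big|\tfrac{d}{dt}a_k(t)-e_k a_k(t)\big|\lesssim e^{-ct}+\|\varepsilon(t)\|_{H^1}^2$. Taking data $u_n(T_n)=\mathcal{R}(T_n)+\sum_k b^n_k\mathcal{Y}_k^{+}$ with $\bm{b}^n$ of size $e^{-cT_n}$, a Brouwer degree argument selects $\bm{b}^n$ so that $|a_k(t)|\lesssim e^{-ct}$ on $[T_0,T_n]$; feeding this into the coercivity bound and closing the bootstrap yields $\|u_n(t)-\mathcal{R}(t)\|_{H^1}\lesssim e^{-c_0t}$ uniformly in $n$, and passing to the limit gives the solution $u$ of Theorem~\ref{thm1.2}. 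I expect essentially all of this to be routine given \cite{CMM11}; the only point requiring care is the bookkeeping that \emph{every} occurrence of the delta potential---in $\mathcal{E}$, in the conserved quantities, in the modulation equations, and in the monotonicity of $\mathcal{G}$---contributes $O(e^{-ct})$, which is automatic precisely because no soliton sits at $x=0$. In particular Theorem~\ref{thm1.2} introduces no genuinely new analytic difficulty, the new phenomenon (two unstable eigenvalues of the operator linearized around $Q_{\omega,\gamma}$) occurring only in Theorem~\ref{thm1.1}.
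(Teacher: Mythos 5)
There is a genuine gap, and it sits exactly at the point your proposal waves away. Your coercivity and almost-monotonicity steps require the cutoffs $\phi_k$ to form a partition of unity following the midpoints of the trajectories (as in \cite{CMM11} and in the paper's $\psi_k$): otherwise the functional $\mathcal{G}$ only controls $\varepsilon$ near the solitons, and the part of $\varepsilon$ living in the gaps --- in particular near $x=0$ --- is not controlled, so the bootstrap $\|\varepsilon(t)\|_{H^1}\lesssim e^{-c_0t}$ cannot be closed. But if the $\phi_k$ do form a partition of unity, then exactly one of them is identically $1$ in a neighborhood of the origin for all large $t$, so your claim that ``the support of $\phi_k$ is disjoint from a fixed neighborhood of $x=0$'' is false, and the delta does enter: the time derivative of the localized momentum $\mathcal{P}_{k}$ on the region containing the origin picks up boundary terms at $x=0$ coming from the jump condition $u_x(0+)-u_x(0-)=-\gamma u(0)$, and these are not $O(e^{-ct})$ and are not bounded by $\|u\|_{H^1}^2$ (this is precisely why the paper's localized virial identity, Lemma \ref{lvi}, assumes $g=0$ near the origin, and why Remark \ref{rmk3.1} flags $\frac{d}{dt}\mathcal{P}_{k_0}$ as the problematic term). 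When all $v_k\neq 0$, that region is attached to a nonzero velocity, so the offending term enters $\frac{d}{dt}\mathcal{G}$ with a nonzero coefficient and your almost-monotonicity estimate $|\frac{d}{dt}\mathcal{G}|\lesssim e^{-ct}$ does not follow.

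The paper's Appendix \ref{appC} supplies the missing device: adjoin a \emph{virtual} velocity $0$ to the family $\{v_1,\dots,v_K\}$ and build the partition of unity from the augmented family. Then the cutoff covering the origin carries the coefficient $v_{k_0}=0$, so its momentum term never needs to be differentiated, and in the coercivity one simply drops the nonnegative term $-\gamma|w(0)|^2\ge 0$ and uses the free quadratic form $\int\psi_{k_0}(|\partial_xw|^2+|w|^2)$ on that region (no soliton sits there). Note also that the paper keeps the \emph{global, exactly conserved} energy $E_\gamma$ in the boosted functional rather than localizing the NLS energy $E_0$ as you propose; localizing $E_0$ with origin-avoiding bumps runs into the same partition-of-unity dilemma, and using $E_0$ globally costs a term $\frac{\gamma}{2}\frac{d}{dt}|u(0,t)|^2$ that is not obviously exponentially small. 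So, contrary to your closing remark, Theorem \ref{thm1.2} does require one new (if small) idea beyond \cite{CMM11}; the rest of your scheme (error estimate for $\mathcal{E}$, modulation, unstable directions $\mathcal{Y}_k^{\pm}$ with a Brouwer-type final-data argument, compactness via $H^s$ well-posedness) matches the paper.
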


\begin{remark}
Unlike Theorem \ref{thm1.1}, the case $K=1$ with non-zero velocity is of interest. Indeed, since the equation \eqref{deltaNLS} is not Galilean invariant, the moving soliton $\mathcal{R}_1$ is not a solution. Thus, the existence of moving one-soliton is non-trivial. 
\end{remark}

\begin{remark}
Theorem \ref{thm1.2} in the case of $K=1$ shows that the previous global dynamics results in \cite{IkIn17, ArIn22, Inu23p}  is optimal, where it was proved that solutions to \eqref{deltaNLS} with $E_{\gamma}(u_{0})M(u_{0})^{\frac{1-s_{c}}{s_{c}}} \leq E_{0}(Q_{1,0})M(Q_{1,0})^{\frac{1-s_{c}}{s_{c}}} $ either scatter or blow up (possibly grow up) in both time directions, where $s_{c}:=1/2-2/(p-1)$. Theorem \ref{thm1.2} means that there exists non-scattering, global (forward-in-time), and bounded solution whenever $E_{\gamma}(u_{0})M(u_{0})^{\frac{1-s_{c}}{s_{c}}} > E_{0}(Q_{1,0})M(Q_{1,0})^{\frac{1-s_{c}}{s_{c}}}$. 
Moreover, in a similar way, Theorem \ref{thm1.2} with $K=2$ also shows that the scattering and blow-up dichotomy result for even solutions obtained recently in \cite{GuIn23p2} is optimal. 
\end{remark}

\begin{remark}
In the subcritical case $1<p< 5$, we expect that Theorem \ref{thm1.2} can be shown in a similar way to \cite{MaMe06}. However, for multi-solitons with one zero velocity as in Theorem \ref{thm1.1}, we need to pay more attention. This is because the ground state standing wave solution $e^{i\omega t}Q_{\omega,\gamma}$ is unstable for any $1< p < 5$ in $H^1(\mathbb{R})$ (see \cite{LCFFKS08}). Thus we need to control the unstable direction even in the subcritical case unlike the usual NLS. 
However, we expect that Theorem \ref{thm1.1} can be shown by the similar method in the subcritical case except for the degenerate case $3<p<5$ and $\omega=\omega_*(p) \in (\gamma^2/4,\infty)$ satisfying $\partial_\omega \|Q_{\omega,\gamma}\|_{L^2}=0$ (see \cite[Section 8.2]{Oht11} for the instability result). 
\end{remark}

\begin{remark}
Combet \cite{Com14} showed the existence of a family of multi-soliton solutions for the usual NLS with $L^2$-supercritical nonlinearity in one dimension.  We can expect that a similar result holds for our equation but we do not develop this in the paper. 
\end{remark}


\subsection{Idea of proofs}

We use the method developed by C\^{o}te, Martel, and Merle \cite{CMM11}. 

Our main interest is the case that one soliton has zero velocity. In this case, we should consider $Q_{\omega,\gamma}(x)e^{i\omega t +i \theta}$ instead of $Q_{\omega,0}(x -x_1)e^{i\omega_1 t +i \theta_1}$.
To construct the multi-soliton for NLS without potential \eqref{NLS}, the linearized operator around $Q_{\omega,0}$ plays an important role. In the $L^2$-supercritical case, the linearized operator has one unstable eigenfunction. 
On the other hand, for \eqref{deltaNLS}, the linearized operator around $Q_{\omega,\gamma}$ has two unstable eigenfunctions, one of which is even and the other is odd. The other solitons $\mathcal{R}_k$ move away from the origin and thus their interaction with the delta at the origin exponentially decays and we can ignore it. The method of \cite{CMM11} to control the unstable direction is still applicable, taking into account the two unstable directions for the linearized operator around $Q_{\omega,\gamma}$. According to the method, local well-posedness in $H^s$ for some $s<1$ is also important for the compactness argument. We are not aware of a proof that the equation \eqref{deltaNLS} is locally well-posed in $H^s$ for some $s<1$, so we provide one by showing the equivalence of the Sobolev type norm related to $(-\Delta_\gamma + 1)^s$ and the usual fractional Sobolev $H^s$ for $s \in (0,3/2)$. 





\subsection{Construction of the paper}

Section \ref{sec2} is devoted to properties of the linearized operator. 
In Section \ref{sec2.1}, we investigate the spectrum of the linearized operator and show that it has two unstable directions when $\gamma<0$. In Section \ref{sec2.2}, we show coercivity results. Section \ref{sec3} is devoted to the main result, Theorem \ref{thm1.1}. In Section \ref{sec3.1}, we give the statement of a uniform backward estimate and show the main theorem by assuming the uniform backward estimate. We give modulation arguments in Section \ref{sec3.2}. In Section \ref{sec3.3}, we give a proof of the uniform backward estimate. In Appendix \ref{appB}, we give a proof of the local well-posedness of \eqref{NLS} in $H^s$ for $s\in (1/2,1)$, which is used in the proof of the main result by a compactness argument. To show it, we give the equivalency of the Sobolev norm related to $(-\Delta_\gamma + 1)^s$ and the usual fractional Sobolev norm with order $s$ for $s \in (0,3/2)$. In Appendix \ref{appC}, we give the modifications required for the proof of Theorem \ref{thm1.2}. 


\section{Linearized operator}
\label{sec2}


\subsection{Spectrum of the linearized operator}

\label{sec2.1}

Throughout this section, let $\gamma \leq 0$ and $\omega >\gamma^2/4$. 
We also use the notation
\begin{align*}
	\langle f , g \rangle_{L^2}= \langle f , g \rangle :=\Re \int f\overline{g}dx.
\end{align*}

We consider the operator
\begin{align*}
	\boL_{\om,\gam} f := -iL^+_{\om,\gam} f_1 +  L^-_{\om,\gam} f_2,\qquad f=f_1+if_2 \in \boD(-\Del_{\gam}),
\end{align*}
where
\begin{align*}
	L^+_{\om,\gam} := - \Del_\gam+ \om - p Q_{\om,\gam}^{p-1}, &&
	L^-_{\om,\gam} := - \Del_\gam + \om - Q_{\om,\gam}^{p-1}.
\end{align*}
%
$\boL_{\om,\gam}$ corresponds to the linear part of the equation \eqref{deltaNLS} for perturbations around $Q_{\om,\gam}$. Indeed, 
if we write $u (t,x) = e^{i\om t} \left( Q_{\om,\gam} + v(t,x) \right)$, then \eqref{deltaNLS} is equivalent to
\begin{align}
\label{eq2.1}
\rd_t v = \boL_{\om,\gam} v + O(v^2).
\end{align}
Note that $\boL_{\om,\gam}$ is just $\R$-linear, not $\C$-linear. 
For this reason, we sometimes write $\boL_{\om,\gam}$ in the matrix form
\begin{align*}
	\boL_{\om,\gam} =
	\begin{pmatrix}
	0 & L^-_{\om,\gam} 
	\\
	-L^+_{\om,\gam} & 0
	\end{pmatrix}
	,\qquad \boD(\boL_{\om,\gam}) = \boD(-\Del_{\gam})^2
\end{align*}
by the identification $\C\sim \R^2$. \par
%
%
The first part of the main claim in this section is the spectral properties of $\boL_{\om,\gam}$ as follows. The proof is given in Section \ref{sec2.1.2}. 
\begin{proposition}
\label{xP2.1}
The following are true. 
\begin{enumerate}
\item If $\gam=0$, then
\begin{align*}
\si (\boL_{\omega,0}) \setminus i\R = \{ \pm \mathfrak{y}_{\om,0} \} 
, &&
\mathfrak{y}_{\om,0} >0.
\end{align*}
\item 
If $\gamma<0$, then
\begin{align*}
\si (\boL_{\omega,\gam}) \setminus i\R = \{ \pm \mathfrak{y}_{\om,\gam} ,\pm \mathfrak{z}_{\om,\gam} \} 
, &&
\mathfrak{y}_{\om,\gam},\ \mathfrak{z}_{\om,\gam} >0.
\end{align*}
\item For $\gam\le 0$, the eigenspace associated with $\pm \mathfrak{y}_{\om,\gam}$ is one-dimensional, spanned by an even function $Y^\pm_{\om,\gam} \in \boD(-\Del_\gam)$ with 
$\inp{Y^\pm_{\om,\gam}}{Q_{\om,\gam}}_{L^2} = 0$. 
\item For $\gam < 0$, the eigenspace associated with $\pm \mathfrak{z}_{\om,\gam}$ is one-dimensional, spanned by an odd function $Z^\pm_{\om,\gam} \in \boD(-\Del_\gam)$. 
\end{enumerate}
\end{proposition}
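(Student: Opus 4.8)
\textbf{Proof proposal for Proposition \ref{xP2.1}.}

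The plan is to analyze the spectrum of $\boL_{\om,\gam}$ by the standard ``linearized operator'' strategy, exploiting the decomposition of $L^2(\R)$ into even and odd subspaces, both of which are preserved by $\boL_{\om,\gam}$ since $Q_{\om,\gam}$ is even and $-\Del_\gam$ commutes with the reflection $x\mapsto -x$. On each invariant subspace the analysis reduces to understanding $L^\pm_{\om,\gam}$, so the first step is to collect spectral information on these two scalar Schr\"odinger operators. Both are of the form $-\Del_\gam + \om - V$ with $V = pQ^{p-1}_{\om,\gam}$ or $Q^{p-1}_{\om,\gam}$ a positive, even, exponentially decaying potential; hence both have essential spectrum $[\om,\infty)$ and finitely many eigenvalues below $\om$. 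The key facts I would establish are: (i) $L^-_{\om,\gam} Q_{\om,\gam} = 0$, so $0$ is an eigenvalue of $L^-_{\om,\gam}$ with the positive (hence ground-state, hence simple) eigenfunction $Q_{\om,\gam}$; therefore $L^-_{\om,\gam} \ge 0$, and it is strictly positive on the orthogonal complement of $Q_{\om,\gam}$. Since $Q_{\om,\gam}$ is even, $L^-_{\om,\gam}$ is strictly positive on the odd subspace. (ii) $L^+_{\om,\gam}$ has exactly one negative eigenvalue, with an even, positive eigenfunction, and $\Ker L^+_{\om,\gam}$ is understood: when $\gam=0$ one has $L^+_{\om,0}(\partial_x Q_{\om,0})=0$ from translation invariance, but when $\gam<0$ translation invariance is broken, $\partial_x Q_{\om,\gam}$ fails the $\delta$-interface condition, and one expects $\Ker L^+_{\om,\gam}=\{0\}$ on the natural domain; this sign/kernel count comes from the explicit $\sech$-type formula for $Q_{\om,\gam}$, Sturm oscillation theory applied on $\R\setminus\{0\}$ together with the jump condition, or a perturbative argument in $\gam$ starting from the known $\gam=0$ picture. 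I would also record that $\partial_x Q_{\om,\gam}$ is odd with $L^+_{\om,\gam}(\partial_x Q_{\om,\gam}) = -\gam \delta'$-type distributional correction supported at $0$; the clean statement I need is the negativity index and kernel of $L^+_{\om,\gam}$ restricted to the even and to the odd subspace separately.

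Next I would pass to $\boL_{\om,\gam}$ itself. An eigenvalue $\lambda\notin i\R$ with eigenfunction $f=f_1+if_2$ satisfies $L^-_{\om,\gam} f_2 = \lambda f_1$ and $-L^+_{\om,\gam} f_1 = \lambda f_2$, hence $L^+_{\om,\gam} L^-_{\om,\gam} f_2 = -\lambda^2 f_2$; writing $\mu=-\lambda^2$, real eigenvalues off the imaginary axis correspond to $\mu>0$, i.e. to negative eigenvalues $-\mu$ of the (self-adjoint after symmetrization) operator $L^+_{\om,\gam}L^-_{\om,\gam}$, equivalently of $(L^-_{\om,\gam})^{1/2}L^+_{\om,\gam}(L^-_{\om,\gam})^{1/2}$ on the range of $L^-_{\om,\gam}$. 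I would count these negative eigenvalues on the even and odd subspaces using the spectral information from step one: on the even subspace, $L^-_{\om,\gam}>0$ and $L^+_{\om,\gam}$ has exactly one negative eigenvalue and trivial kernel (even part), which by a standard argument (as in Grillakis--Shatah--Strauss or the Pego--Weinstein-type analysis) gives exactly one positive $\mu$, hence the real pair $\pm\mathfrak{y}_{\om,\gam}$ with $\mathfrak{y}_{\om,\gam}=\sqrt{\mu}>0$, and the eigenfunction is even; on the odd subspace, $L^-_{\om,\gam}>0$ there as well, so the count reduces to the number of negative eigenvalues of $L^+_{\om,\gam}$ restricted to odd functions. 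When $\gam=0$ this number is zero ($L^+_{\om,0}\ge 0$ on the odd subspace since $\partial_x Q_{\om,0}$ spans its kernel and it is positive elsewhere), giving no extra real spectrum — this is case (1). When $\gam<0$ the repulsive $\delta$ pushes an eigenvalue of $L^+_{\om,\gam}$ below $0$ on the odd subspace (again read off from the explicit profile, or from a continuity/eigenvalue-perturbation argument as $\gam$ decreases from $0$, tracking the zero eigenvalue of $L^+_{\om,0}$ associated to $\partial_x Q_{\om,0}$), producing exactly one more positive $\mu$ and hence the odd pair $\pm\mathfrak{z}_{\om,\gam}$ — this is cases (2) and (4). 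Simplicity of each eigenvalue follows from simplicity of the corresponding eigenvalue of $L^+_{\om,\gam}$ on the respective subspace (ground-state-type simplicity / Sturm theory). The orthogonality $\inp{Y^\pm_{\om,\gam}}{Q_{\om,\gam}}=0$ in (3) follows because $Y^\pm_{\om,\gam}=f_1^\pm + i f_2^\pm$ has $f_2^\pm\in\Ran L^-_{\om,\gam}\perp \Ker L^-_{\om,\gam} = \Span\{Q_{\om,\gam}\}$, and for the real-part component one uses the eigenvalue relation $L^+_{\om,\gam} f_1^\pm = -\lambda f_2^\pm$ together with $\inp{f_2^\pm}{Q_{\om,\gam}}=0$ and self-adjointness; I would be slightly careful here about exactly which inner product ($\langle\cdot,\cdot\rangle$ as defined, the real part of the $L^2$ pairing) enters, but the orthogonality statement is the expected one.

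The main obstacle I anticipate is the spectral bookkeeping for $L^+_{\om,\gam}$ with $\gam<0$: precisely establishing that it has exactly one negative eigenvalue on the even subspace and exactly one on the odd subspace, and that $\Ker L^+_{\om,\gam}=\{0\}$ on the full operator domain incorporating the $\delta$-interface condition. The difficulty is that the $\delta$-potential breaks translation invariance, so one loses the free zero-eigenfunction $\partial_x Q$; the cleanest route is probably to use the explicit closed form of $Q_{\om,\gam}$ (a shifted $\sech^{2/(p-1)}$) to solve the relevant ODEs on $(0,\infty)$ and $(-\infty,0)$ explicitly — the equations $L^+ g = 0$ and $L^+ g = \om g$ are solvable in terms of the background soliton's known kernel — and then impose the jump condition $g'(0+)-g'(0-)=-\gam g(0)$ to determine which solutions survive, thereby pinning down kernels and, via a continuity/homotopy argument in $\gam$ or Sturm oscillation counting, the negativity indices. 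Everything else (essential spectrum via Weyl, absence of embedded eigenvalues or their irrelevance here, reduction to $L^+L^-$, simplicity) is standard and I would treat it briskly, citing the analogous computations for NLS without potential.
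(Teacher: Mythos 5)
Your overall skeleton is close to the paper's: both split into even/odd sectors, reduce the eigenvalue problem for $\boL_{\om,\gam}$ to the scalar operators $L^\pm_{\om,\gam}$, and close the argument with a Grillakis--Shatah--Strauss-type bound on the unstable dimension. The paper differs in two ways: it takes the variational route of Chang--Gustafson--Nakanishi--Tsai, minimizing the quotient $\inp{L^+_{\om,\gam}f}{f}_{L^2}/\inp{(L^-_{\om,\gam})^{-1}f}{f}_{L^2}$ over even functions orthogonal to $Q_{\om,\gam}$ and over odd functions (rather than analyzing $L^+_{\om,\gam}L^-_{\om,\gam}$), and it simply cites Le Coz et al.\ for the spectral data of $L^\pm_{\om,\gam}$ (two negative eigenvalues of $L^+$, even and odd) instead of re-deriving them; your Sturm/ODE plan for that input is viable, and indeed the odd sector is clean because odd $H^1$ functions vanish at the origin so the jump condition is automatic there.

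The genuine gap is in your even-sector count. You assert that $L^-_{\om,\gam}>0$ on the even subspace --- false, since $Q_{\om,\gam}$ is even and spans $\ker L^-_{\om,\gam}$ --- and that one negative even eigenvalue of $L^+_{\om,\gam}$ together with positivity of $L^-_{\om,\gam}$ "gives exactly one positive $\mu$" by a standard argument. Because of $\ker L^-_{\om,\gam}$, the relevant quadratic form lives on the orthogonal complement of $Q_{\om,\gam}$, and whether $L^+_{\om,\gam}$ still has a negative direction there is precisely the Vakhitov--Kolokolov slope condition $\partial_\om \nor{Q_{\om,\gam}}{L^2}^2<0$, which holds here only because $p>5$ (the paper's Lemma \ref{L2.5} (3)--(4), used to get $\mu_1<0$ in Step 1 of its proof). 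With only the inputs you list, your argument would equally "produce" an even unstable eigenvalue in the subcritical range $1<p<5$, where there is none; so the supercritical slope condition is the engine of the even-sector existence, and it never appears in your proposal. Two smaller points in the same vein: the exclusion of any further spectrum off $i\R$ and the simplicity claims need more than a brisk remark --- the paper gets both by capping the total unstable dimension at $2$ via \cite[Theorem 5.8]{GSS90} (using $n(L^+_{\om,\gam})=2$) and exhibiting the two independent even/odd eigenfunctions; simplicity of the eigenvalues of $L^+_{\om,\gam}$ alone does not transfer to $\boL_{\om,\gam}$, and your symmetrization $(L^-_{\om,\gam})^{1/2}L^+_{\om,\gam}(L^-_{\om,\gam})^{1/2}$ degenerates on the even sector exactly because of $\ker L^-_{\om,\gam}$.
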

%
%
%
%
%
%
The new assertion of this paper is the case $\gam<0$, while 
the case $\gam=0$ has been already proven by \cite[Corollary 3.1]{Gri88} (see also \cite{Sch06}). 
In this case, the new unstable mode emerges due to the eigenvalues branching from the kernel of $\boL_{\om,0}$.

\subsubsection{Lemmas}

We only consider $\gam<0$. 
We first recall the spectral properties of $L_{\omega,\gamma}^{\pm}$ obtained in \cite{LCFFKS08}. 
%
%
%
\begin{lemma}[The spectrum of $L^\pm_{\om,\gam}$]
\label{xL2.2}
The following properties hold:
\begin{enumerate}
\item We have
\begin{align*}
	\si (L^-_{\om,\gam}) = \{ 0 \} \cup \Si^- \cup [\om,\infty),\qquad 
	\Si^- \subset [ \rho^- , \om)
	\quad \text{for some } \rho^->0
\end{align*}
where $\Si^-$ consists of 
point spectrum. 
Moreover, 
\begin{align*}
	\ker (L^-_{\om,\gam}) = \Span \{ Q_{\om,\gam} \}.
\end{align*}
\item 
We have
\begin{align*}
	&\si (L^+_{\om,\gam}) = \{ \la_0(\gam) ,\la_1(\gam) \} \cup \Si^+(\gam) \cup [\om,\infty),
\quad \la_0(\gam) < \la_1(\gam) <0,
	\\
	&\Si^+(\gam) \subset [ \rho^+ , \om)
	\quad \text{for some } \rho^+>0,
\end{align*}
where $\Si^+$ consists of discrete points. 
Moreover, 
\begin{align*}
	\dim \ker (L^+_{\om,\gam} - \la_0(\gam)) = \dim \ker (L^+_{\om,\gam} - \la_1(\gam)) =1, 
\end{align*}
and the eigenfunction associated with $\la_0(\gam)$ (resp. $\la_1(\gam)$) is even (resp. odd).
\end{enumerate}
\end{lemma}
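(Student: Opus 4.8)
The plan is to recover Lemma~\ref{xL2.2} from the analysis of \cite{LCFFKS08}, organized into three pieces: the essential spectrum, the operator $L^-_{\om,\gam}$, and the operator $L^+_{\om,\gam}$. \emph{Essential spectrum.} Since $\gam<0$, the repulsive point interaction $-\Del_\gam$ has no eigenvalues and $\sigma(-\Del_\gam)=\sigma_{\mathrm{ess}}(-\Del_\gam)=[0,\infty)$; as $Q_{\om,\gam}^{p-1}$ is bounded and decays exponentially, multiplication by it is $-\Del_\gam$-compact, so by Weyl's theorem $\sigma_{\mathrm{ess}}(L^\pm_{\om,\gam})=[\om,\infty)$ and below $\om$ there are at most finitely many eigenvalues, each of finite multiplicity. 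That there are no eigenvalues in $[\om,\infty)$ follows from the standard asymptotics of solutions at energies $\ge\om$ (oscillatory above $\om$, linear growth at $\om$), which the point interaction at $0$ does not affect.

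\emph{The operator $L^-_{\om,\gam}$.} The equation for $Q_{\om,\gam}$ gives $L^-_{\om,\gam}Q_{\om,\gam}=0$ with $Q_{\om,\gam}>0$, and $e^{-tL^-_{\om,\gam}}$ is positivity improving: for $\gam<0$ the Green's function of $-\Del_\gam+M$ is strictly positive, hence so is that of $-\Del_\gam+W$ for bounded $W$ via the Trotter formula. A strictly positive eigenfunction is therefore the ground state, so $0=\min\sigma(L^-_{\om,\gam})$ is simple, $\ker(L^-_{\om,\gam})=\Span\{Q_{\om,\gam}\}$, there is no negative spectrum, and the remaining eigenvalues form a finite set $\Sigma^-$; being separated from the isolated eigenvalue $0$ and not accumulating at $\om$, it lies in $[\rho^-,\om)$ for some $\rho^->0$.

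\emph{The operator $L^+_{\om,\gam}$ (the main point).} Since $Q_{\om,\gam}$ is even, $L^+_{\om,\gam}$ commutes with $x\mapsto-x$ and decomposes as $L^+_{\mathrm{even}}\oplus L^+_{\mathrm{odd}}$; odd elements of $\boD(-\Del_\gam)$ vanish at $0$, so $L^+_{\mathrm{odd}}$ is unitarily equivalent to $-\partial_x^2+\om-pQ_{\om,\gam}^{p-1}$ on $L^2(0,\infty)$ with Dirichlet condition at $0$, while for even elements the jump condition becomes the Robin condition $f'(0+)=-\tfrac{\gam}{2}f(0)$. The shift $y=x-a$ with $a=a(\gam):=-\tfrac{2}{(p-1)\sqrt\om}\tanh^{-1}\!\big(\tfrac{\gam}{2\sqrt\om}\big)>0$ turns the potential into $\om-pQ_{\om,0}^{p-1}$ on $(-a,\infty)$, reducing both sectors to half-line restrictions of the translation-invariant $L^+_{\om,0}$, whose kernel on $\R$ is spanned by the odd function $Q_{\om,0}'$, which has a single zero. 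Then: (i) each sector's lowest eigenvalue is simple with a positive half-line ground state, whose even (resp. odd) extension — exactly compatible with the Robin (resp. Dirichlet) condition — is an even (resp. odd) eigenfunction of $L^+_{\om,\gam}$; (ii) at $\gam=0$ the odd/Dirichlet bottom equals $0$ with eigenfunction $Q_{\om,0}'$, so by strict monotonicity of Dirichlet eigenvalues under the enlargement $(-a,\infty)$ the odd bottom, call it $\la_1(\gam)$, is $<0$ for $\gam<0$; (iii) the even/Robin bottom $\la_0(\gam)$ lies strictly below the Dirichlet bottom on the same interval (strict interlacing between the finite Robin and the Dirichlet condition), so $\la_0(\gam)<\la_1(\gam)<0$, with parities as in (i); (iv) $0\notin\sigma(L^+_{\om,\gam})$ for every $\gam\in(-2\sqrt\om,0)$, since any solution of $L^+_{\om,\gam}u=0$ that is $L^2$ at both ends is a multiple of $Q_{\om,0}'$, suitably translated, on each half-line, and continuity together with the jump condition at $0$ would force $(\log Q_{\om,0})''(-a)=0$, impossible because $(\log Q_{\om,0})''<0$ everywhere. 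Finally, (iv), continuity of the eigenvalues in $\gam$, and the known count at $\gam=0$ (one negative eigenvalue, plus the simple zero eigenvalue sitting in the odd sector, which descends by (ii)) show that no eigenvalue can cross $0$, so $L^+_{\om,\gam}$ has exactly the two negative eigenvalues $\la_0(\gam)<\la_1(\gam)$, each simple because it lies in a single parity sector, and the remaining eigenvalues form a finite set $\Sigma^+(\gam)\subset(0,\om)$, contained in $[\rho^+,\om)$ since $0$ is not in the spectrum.

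\emph{Main obstacle.} The delicate step is the $L^+$ count: establishing exactly two negative eigenvalues, none at $0$, with the asserted even/odd labelling, uniformly over $\gam\in(-2\sqrt\om,0)$. This is where the explicit profile $Q_{\om,\gam}$ is indispensable — it pins down the shift $a(\gam)$, drives the matching computation excluding $0$ from $\sigma(L^+_{\om,\gam})$, and legitimizes the comparison with the solvable $\gam=0$ operator; the parity splitting, domain monotonicity, and Robin–Dirichlet interlacing then complete it. The essential-spectrum and $L^-$ parts are routine.
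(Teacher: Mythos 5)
You should know at the outset that the paper does not prove this lemma at all: it is stated as a recollection of the spectral analysis of Le~Coz--Fukuizumi--Fibich--Ksherim--Sivan \cite{LCFFKS08}, so your argument is necessarily a different route --- a self-contained proof. Your route is sound, and the computations that carry it check out: the parity decomposition with the even sector becoming the Robin condition $f'(0+)=-\tfrac{\gamma}{2}f(0)$ and the odd sector the Dirichlet condition; the identity $Q_{\omega,\gamma}(x)=Q_{\omega,0}(|x|-a(\gamma))$ reducing both sectors to half-line restrictions of $L^+_{\omega,0}$ on $(-a,\infty)$; strict domain monotonicity giving the odd bottom $\lambda_1(\gamma)<0$ from the Dirichlet zero mode $Q_{\omega,0}'$ at $\gamma=0$; the form comparison (Robin form restricted to $H^1_0$ equals the Dirichlet form, plus unique continuation at the endpoint for strictness) giving $\lambda_0(\gamma)<\lambda_1(\gamma)$; and the exclusion of $0$ from $\sigma(L^+_{\omega,\gamma})$, where matching the decaying solutions $c_\pm Q_{\omega,0}'(x\mp a)$ through the jump condition, combined with the profile's own jump relation $Q_{\omega,0}'(a)=\tfrac{\gamma}{2}Q_{\omega,0}(a)$, indeed forces $(\log Q_{\omega,0})''(\pm a)=0$, contradicting $(\log Q_{\omega,0})''=-\tfrac{2k^2}{p-1}\sech^2(k\,\cdot)<0$. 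The $L^-$ part (Perron--Frobenius with the explicitly positive Green's function of $-\Delta_\gamma+M$ for $\gamma<0$) and the essential-spectrum part (Weyl, since the resolvent of $-\Delta_\gamma$ differs from that of $-\Delta_0$ by a rank-one term) are routine, as you say; note that absence of embedded eigenvalues is not even needed for the statement. Compared with simply importing \cite{LCFFKS08}, your proof buys a transparent mechanism for where the second (odd) unstable direction of $L^+$ comes from --- the descent of the translational zero mode once the repulsive delta breaks translation invariance --- which is exactly the heuristic the paper mentions after Proposition \ref{xP2.1}.

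The one place you should expand is the final count. ``No eigenvalue can cross $0$'' needs to be implemented: the family $\gamma\mapsto L^+_{\omega,\gamma}$ is norm-resolvent continuous on $(-2\sqrt{\omega},0)$, and since $0\notin\sigma(L^+_{\omega,\gamma})$ there by your step (iv), the negative spectral projection varies norm-continuously and has locally constant (hence constant) rank; the value of that rank must then be transferred from $\gamma=0$, where $0$ \emph{is} an eigenvalue of the limit operator, so you must count total multiplicity in an interval $(-\infty,\mu]$ with $\mu\in(0,\omega)$ chosen below the rest of $\sigma(L^+_{\omega,0})$, obtaining exactly $2$ for $\gamma$ close to $0^-$, and combine this upper bound with the lower bound from (ii)--(iii). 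Relatedly, ``each simple because it lies in a single parity sector'' is not by itself a justification --- a priori $\lambda_1(\gamma)$ could coincide with the second even-sector eigenvalue; simplicity and the parity labelling follow only after the total negative multiplicity is pinned to $2$ together with $\lambda_0(\gamma)<\lambda_1(\gamma)$. With those two points written out, the proof is complete.
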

%
%
We also recall the properties of $Q_{\om,\gam}$.  
%
%
\begin{lemma}
\label{L2.5}
The following are true.
\begin{enumerate}
\item $L^+_{\om,\gam} Q_{\om,\gam} = - (p-1) Q_{\om,\gam}^p$.

\item $\rd_\om Q_{\om,\gam} \in \boD(-\Del_\gam)_{\even}$, and $L^+_{\om,\gam} \rd_\om Q_{\om,\gam} = -Q_{\om,\gam}$.

\item (Vakitov--Kolokolov condition) 
\begin{align*}
\rd_{\om} \nor{Q_{\om,\gam}}{L^2(\R)}^2 = 
\frac 1{2\om} \left( \frac{1}{p-1} - \frac 14 \right) \nor{Q_{\om,\gam}}{L^2(\R)}^2 
-\frac{1}{p-1} \frac{\rd_\om \al_\om}{\sqrt{\om}} Q_{\om,\gam}^2 (0),
\end{align*}
where 
$
\al_\om := \tanh^{-1} \left( \frac{\gam}{2\sqrt{\om}} \right)
$.
In particular, if $p>5$ and $\gam\le 0$, then $\rd_{\om} \nor{Q_{\om,\gam}}{L^2(\R)}^2 <0$.
\item If $p>5$ and $\gam \le 0$, then 
\begin{align*}
\inf_{\substack{f \in \boD(-\Del_\gam) \\ f\perp Q_{\om,\gam} ,\ \nor{f}{L^2} =1}} 
\inp{L^+_{\om,\gam} f}{f}_{L^2} < 0.
\end{align*}
\end{enumerate}
\end{lemma}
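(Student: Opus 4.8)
The plan is to exhibit an explicit test function $f \perp Q_{\om,\gam}$ for which the quadratic form $\inp{L^+_{\om,\gam} f}{f}$ is strictly negative, using parts (1)--(3) of the lemma as building blocks. The natural candidate is built from $\rd_\om Q_{\om,\gam}$, since part (2) gives $L^+_{\om,\gam} \rd_\om Q_{\om,\gam} = -Q_{\om,\gam}$, which directly ties the quadratic form to the Vakhitov--Kolokolov quantity $\rd_\om \nor{Q_{\om,\gam}}{L^2}^2$. Concretely, I would set $g := \rd_\om Q_{\om,\gam}$ and then let $f := g - \frac{\inp{g}{Q_{\om,\gam}}}{\nor{Q_{\om,\gam}}{L^2}^2} Q_{\om,\gam}$ be its projection orthogonal to $Q_{\om,\gam}$; since $Q_{\om,\gam} \in \ker L^+_{\om,\gam}$ by the relation $L^+_{\om,\gam} Q_{\om,\gam} = -(p-1)Q_{\om,\gam}^p$... wait, $Q_{\om,\gam}$ is \emph{not} in the kernel of $L^+_{\om,\gam}$; rather $Q_{\om,\gam} \in \ker L^-_{\om,\gam}$. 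So one must be a little more careful: the correction term $\propto Q_{\om,\gam}$ does contribute to $\inp{L^+_{\om,\gam} f}{f}$, but its contribution can be computed explicitly from part (1), namely $\inp{L^+_{\om,\gam} Q_{\om,\gam}}{Q_{\om,\gam}} = -(p-1)\nor{Q_{\om,\gam}}{L^{p+1}}^{p+1}$, together with the cross term $\inp{L^+_{\om,\gam} Q_{\om,\gam}}{g} = -(p-1)\inp{Q_{\om,\gam}^p}{\rd_\om Q_{\om,\gam}} = -\tfrac{p-1}{p+1}\rd_\om \nor{Q_{\om,\gam}}{L^{p+1}}^{p+1}$.

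Carrying this out, $\inp{L^+_{\om,\gam} f}{f}$ expands into three terms: $\inp{L^+_{\om,\gam} g}{g} = -\inp{Q_{\om,\gam}}{g} = -\tfrac12 \rd_\om \nor{Q_{\om,\gam}}{L^2}^2$, which is strictly \emph{positive} when $p>5$ by part (3); a cross term $-2\tfrac{\inp{g}{Q_{\om,\gam}}}{\nor{Q_{\om,\gam}}{L^2}^2}\inp{L^+_{\om,\gam} g}{Q_{\om,\gam}}$; and the correction term $\tfrac{\inp{g}{Q_{\om,\gam}}^2}{\nor{Q_{\om,\gam}}{L^2}^4}\inp{L^+_{\om,\gam} Q_{\om,\gam}}{Q_{\om,\gam}}$. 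Since the leading term has the wrong sign, a single choice of scalar multiple of $\rd_\om Q_{\om,\gam}$ will not suffice; instead I would take a two-parameter family $f = a\, g + b\, Q_{\om,\gam}$ (then orthogonalize, or equivalently work on the two-dimensional span of $g$ and $Q_{\om,\gam}$ intersected with $Q_{\om,\gam}^\perp$) — but that intersection is one-dimensional, so this reduces to the single corrected $f$ above. The resolution is the standard one from Weinstein/Grillakis-Shatah-Strauss theory: one uses instead the test function associated with solving $L^+_{\om,\gam} \chi = Q_{\om,\gam}$, i.e. $\chi = -\rd_\om Q_{\om,\gam}$, and evaluates $\inp{L^+_{\om,\gam} P\chi}{P\chi}$ where $P$ is the $L^2$-projection off $Q_{\om,\gam}$; a direct computation shows this equals $\inp{L^+_{\om,\gam}\chi}{\chi} - \tfrac{\inp{\chi}{Q_{\om,\gam}}^2}{\nor{Q_{\om,\gam}}{L^2}^2}\cdot(\text{something})$, and the dominant contribution is $\inp{L^+_{\om,\gam}\chi}{\chi} = -\inp{\chi}{Q_{\om,\gam}} = \rd_\om \nor{Q_{\om,\gam}}{L^2}^2 < 0$ up to a correction that I must check does not overwhelm it.

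The main obstacle, then, is precisely controlling that projection correction term: one needs $\inp{L^+_{\om,\gam} P\chi}{P\chi} < 0$, and after the algebra this amounts to verifying that the sign is governed by $\rd_\om\nor{Q_{\om,\gam}}{L^2}^2$ and not spoiled by the $Q_{\om,\gam}$-component of $\chi$. The cleanest route is to invoke the well-known variational characterization: for a Schr\"odinger operator $L^+$ with exactly one negative eigenvalue (here $\la_0(\gam)<0$ from Lemma \ref{xL2.2}(2); note the odd negative eigenvalue $\la_1(\gam)$ corresponds to an odd eigenfunction, while $\rd_\om Q_{\om,\gam}$ and $Q_{\om,\gam}$ are even, so on the even subspace $L^+_{\om,\gam}$ has exactly one negative direction) and trivial kernel on the relevant subspace, the quantity $\inf\{\inp{L^+ f}{f} : f\perp Q,\ \nor{f}{L^2}=1\}$ is negative if and only if $\inp{(L^+)^{-1}Q}{Q} > 0$, where $(L^+)^{-1}$ is taken on the orthogonal complement of the kernel. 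Since $(L^+_{\om,\gam})^{-1}Q_{\om,\gam} = -\rd_\om Q_{\om,\gam}$ (modulo kernel, but $L^+$ has trivial kernel here, so exactly), we get $\inp{(L^+_{\om,\gam})^{-1}Q_{\om,\gam}}{Q_{\om,\gam}} = -\inp{\rd_\om Q_{\om,\gam}}{Q_{\om,\gam}} = -\tfrac12 \rd_\om\nor{Q_{\om,\gam}}{L^2}^2 > 0$ by part (3), and the claim follows. I would carry out the steps in this order: (i) restrict to the even subspace and record from Lemma \ref{xL2.2} that $L^+_{\om,\gam}$ there has exactly one negative eigenvalue and trivial kernel; (ii) recall $L^+_{\om,\gam}\rd_\om Q_{\om,\gam} = -Q_{\om,\gam}$ and $\rd_\om\nor{Q_{\om,\gam}}{L^2}^2 < 0$ from parts (2), (3); (iii) apply the spectral/variational lemma (reproving it in a line if needed, via diagonalizing $L^+$ against its negative eigenfunction) to conclude the infimum over $\{f \perp Q_{\om,\gam}\}$ is strictly negative.
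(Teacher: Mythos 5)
Your final plan (i)--(iii) is correct and is essentially the paper's own argument: the paper proves (4) by testing with the explicit function $\phi - \frac{\inp{Q_{\om,\gam}}{\phi}_{L^2}}{\inp{Q_{\om,\gam}}{\rd_\om Q_{\om,\gam}}_{L^2}}\,\rd_\om Q_{\om,\gam}$ (with $\phi$ the even negative eigenfunction of $L^+_{\om,\gam}$), which is precisely the negative direction one obtains on unwinding the constrained-minimization criterion you invoke, the key input $\inp{(L^+_{\om,\gam})^{-1}Q_{\om,\gam}}{Q_{\om,\gam}}_{L^2} = -\tfrac12\rd_\om\nor{Q_{\om,\gam}}{L^2}^2>0$ coming from parts (2)--(3) exactly as in your step (ii), and for $\gam<0$ the paper additionally notes the still simpler alternative of testing with the odd negative eigenfunction, which is automatically orthogonal to the even $Q_{\om,\gam}$. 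Note only that your intermediate paragraph has a sign slip --- with $\chi=-\rd_\om Q_{\om,\gam}$ one has $\inp{L^+_{\om,\gam}\chi}{\chi}_{L^2}=\inp{Q_{\om,\gam}}{\chi}_{L^2}=-\tfrac12\rd_\om\nor{Q_{\om,\gam}}{L^2}^2>0$, not $<0$ --- but that exploratory route is superseded by your final argument, which is sound.
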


\begin{proof}
See \cite{FuJe08} for (1) and (3). (2) follows from direct calculations. Substituting for $f$ the normalization of $\phi - \frac{\langle Q_{\omega,\gamma}, \phi \rangle_{L^2}}{ \langle Q_{\omega,\gamma}, \partial_\omega Q_{\omega,\gamma} \rangle_{L^2}} \partial_\omega Q_{\omega,\gamma}$, where $\phi$ is an even eigenfunction of $L_{\omega,\gamma}^+$ or substituting a normalized odd eigenfunction $\psi$ of $L_{\omega,\gamma}^+$, we get (4). 
\end{proof}

\subsubsection{Proof of Proposition \ref{xP2.1}}
\label{sec2.1.2}

We now prove Proposition \ref{xP2.1}. 
We apply the argument of 
\cite{CGNT07} under even/odd restrictions. 
Note that the even case is already investigated in \cite{GuIn22p}. 
For the reader's convenience, however, we give a proof including both cases. 
For a subspace $X\subset L^2(\R)$, we define
$$
X_\perp := 
\{f\in X\ |\ \inp{f}{Q_{\omega,\gamma}}_{L^2} =0  \}. 
$$
We divide the proof into several steps.\medskip\par
\textbf{Step 1.} Consider
\begin{equation}\label{x2.3}
\mu_1 := \inf_{
(H^1_{\even})_\perp \setminus \{ 0\}
} 
\frac{\inp{L^+_{\omega,\gamma} f}{f}_{L^2} }{\inp{(L^-_{\omega,\gamma})^{-1} f}{f}_{L^2} }.
\end{equation}
This is well-defined since Lemma \ref{xL2.2} implies that $L^-_{\om,\gam}$ is bijective mapping from $\boD (-\Del_\gam)_\perp \to L^2_\perp$. 
%
%
We claim that $-\infty < \mu_1 < 0$. First note that for $f\in (H^1_{\even})_\perp$, we have
$$
\inp{(L^-_{\omega,\gamma})^{-1}f}{f}_{L^2} = \inp{g}{L^-_{\omega,\gamma}g}_{L^2} \ge \rho^- \nor{g}{L^2}^2,\qquad g:=(L^-_{\omega,\gamma})^{-1}f.
$$
Hence, by Lemma \ref{L2.5} (4), we have $\mu_1<0$. 
We next show $\mu_1>-\infty$. For this, 
let $f\in H^1_\perp$, and write $g= (L^-_{\omega,\gamma})^{-1}f$. 
First note that $f\in \boD((L^-_{\omega,\gamma})^{1/2})$, since 
Friedrichs' second representation theorem implies 
$\boD((L^-_{\omega,\gamma})^{1/2}) = H^1(\R)$. 
Then for $\varepsilon_1>0$, we have
$$
\begin{aligned}
\nor{f}{L^2}^2 &= \inp{f}{L^-_{\omega,\gamma} g}_{L^2} = \inp{(L^-_{\omega,\gamma})^{1/2} f}{(L^-_{\omega,\gamma})^{1/2} g} 
\le \nor{(L^-_{\omega,\gamma})^{1/2} f}{L^2} \nor{(L^-_{\omega,\gamma})^{1/2} g}{L^2} \\
&\le \varepsilon_1 \nor{(L^-_{\omega,\gamma})^{1/2} f}{L^2}^2 + C_{\varepsilon_1} \nor{(L^-_{\omega,\gamma})^{1/2} g}{L^2}^2\\
&= \varepsilon_1 \inp{L^-_{\omega,\gamma}f}{f}_{L^2} + C_{\varepsilon_1} \inp{L^-_{\omega,\gamma}g}{g}_{L^2} \\
&= \varepsilon_1 \inp{L^+_{\omega,\gamma}f}{f}_{L^2} + (p-1)\varepsilon_1 \int_\R Q_{\omega,\gamma}^{p-1} |f|^2 dx
 + C_{\varepsilon_1} \inp{f}{(L^-_{\omega,\gamma})^{-1} f}_{L^2}\\
&\le \varepsilon_1 \inp{L^+_{\omega,\gamma}f}{f}_{L^2} + C_0 (p-1)\varepsilon_1 \nor{f}{L^2}^2
 + C_{\varepsilon_1} \inp{f}{(L^-_{\omega,\gamma})^{-1} f}_{L^2}.
\end{aligned}
$$
Now we take $\varepsilon_1$ such that $C_0 (p-1) \varepsilon_1 = \frac 12$. Then we obtain
\begin{equation}\label{2.8}
0\le \frac{1}{C} \nor{f}{L^2}^2 \le \inp{L^+_{\omega,\gamma}f}{f}_{L^2} + C \inp{f}{(L^-_{\omega,\gamma})^{-1} f}_{L^2},
\end{equation}
which yields
$$
-C \le \frac{\inp{L^+_{\omega,\gamma}f}{f}_{L^2}}{ \inp{(L^-_{\omega,\gamma})^{-1} f}{f}_{L^2} }.
$$
Hence the claim follows. 
\medskip\par
\textbf{Step 2.} We next claim that the infimum in \eqref{x2.3} is attained. Let $\{f_n \}$ be a sequence with 
$$
\inp{(L^-_{\omega,\gamma})^{-1} f_n}{f_n}_{L^2} =1 ,\qquad \text{and}\qquad 
\inp{L^+_{\omega,\gamma}f_n}{f_n}_{L^2} \xrightarrow{n\to\infty} \mu_1.
$$
Then by \eqref{2.8}, $\{f_n\}$ is bounded in $L^2$. 
Since $\mu_1<0$, we may assume that $\inp{L^+_{\omega,\gamma}f_n}{f_n}_{L^2}<0$ for all $n$, which yields
$$
\nor{\rd_x f_n}{L^2}^2 - \gamma |f_n(0)|^2 + \omega \nor{f_n}{L^2}^2 < p \int_\R Q_{\omega,\gamma}^{p-1} |f_n|^2 dx \le C \nor{f_n}{L^2}^2.
$$
Thus $\{f_n\}$ is, in fact, bounded in $H^1(\R)$. Therefore, 
there exists $f\in H^1(\R)$ such that $f_n \wto f$ in $H^1(\R)$ by taking subsequence if necessary. 
Clearly $f\in  (H^1_{\even})_\perp$. 
Also, we have
$$
f_n(0) \to f(0)\qquad\text{and}\qquad 
\int_\R Q_{\omega,\gamma}^{p-1} |f_n|^2 dx \to \int_\R Q_{\omega,\gamma}^{p-1} |f|^2 dx,
$$
where the latter follows from the Rellich--Kondrachov theorem, after the restriction to a uniformly bounded domain in $n$. Therefore, by lower semi-continuity of norms, we have
$$
\inp{L^+_{\omega,\gamma}f}{f}_{L^2} \le \liminf_{n\to\infty} \inp{L^+_{\omega,\gamma}f_n}{f_n}_{L^2} = \mu_1.
$$
This especially implies $f\neq 0$. 
Moreover, 
\begin{align*}
	0 &\le \inp{(L^-_{\omega,\gamma})^{-1} (f-f_n)}{(f-f_n)}_{L^2} 
	\\
	&= 
	\inp{(L^-_{\omega,\gamma})^{-1} f}{f}_{L^2} -2 \inp{(L^-_{\omega,\gamma})^{-1} f}{f_n}_{L^2} 
	+ \inp{(L^-_{\omega,\gamma})^{-1} f_n}{f_n}_{L^2},
\end{align*}
and taking $\liminf_{n\to\infty}$ gives
$$
\begin{aligned}
0\le - \inp{(L^-_{\omega,\gamma})^{-1} f}{f}_{L^2} + \liminf_{n\to\infty}\inp{(L^-_{\omega,\gamma})^{-1} f_n}{f_n}_{L^2} \\
\Longleftrightarrow \qquad 
\inp{(L^-_{\omega,\gamma})^{-1} f}{f}_{L^2} \le \liminf_{n\to\infty}\inp{(L^-_{\omega,\gamma})^{-1} f_n}{f_n}_{L^2}  = 1.
\end{aligned}
$$
Therefore, since $\mu_1<0$, we get
$$
\inp{L^+_{\omega,\gamma}f}{f}_{L^2} \le \mu_1 \le \mu_1 \inp{(L^-_{\omega,\gamma})^{-1} f}{f}_{L^2}, 
$$
and thus $f$ attains the minimum.\medskip\par
\textbf{Step 3.} By the Lagrange multiplier theorem, the minimum $\xi_1 \in (H_{\even}^1)_{\perp}\setminus\{0\}$ should satisfy
$$
L^+_{\omega,\gamma} \xi_1 = \al (L^-_{\omega,\gamma})^{-1} \xi_1 + \be Q_{\omega,\gamma}
$$
for some $\al,\be\in\R$. 
Then, taking inner product with $\xi_1$ implies $\al= \mu_1$. 
Letting $
\mathfrak{y}_{\om,\gam}
:= \sqrt{-\mu_1} >0$, we have
$$
\begin{pmatrix}
0 & L^-_{\omega,\gamma} \\
- L^+_{\omega,\gamma} & 0
\end{pmatrix}
\begin{bmatrix}
\xi_1\\
\pm \mathfrak{y}_{\om,\gam} (L^-_{\omega,\gamma})^{-1} \xi_1 
\mp \mathfrak{y}_{\om,\gam}^{-1} \be Q_{\omega,\gamma}
\end{bmatrix}
=
\pm \mathfrak{y}_{\om,\gam} 
\begin{bmatrix}
\xi_1\\
\pm \mathfrak{y}_{\om,\gam} (L^-_{\omega,\gamma})^{-1} \xi_1 
\mp \mathfrak{y}_{\om,\gam}^{-1} \be Q_{\omega,\gamma}
\end{bmatrix}
.
$$
Thus $\pm \mathfrak{y}_{\om,\gam}$ are eigenvalues, with even eigenfunctions 
$Y^\pm_{\om,\gam} := \xi_1 \pm i (\mathfrak{y}_{\om,\gam} (L^-_{\omega,\gamma})^{-1} \xi_1 - \mathfrak{y}_{\om,\gam}^{-1} \be Q_{\omega,\gamma})$. \medskip\par
\textbf{Step 4.} We next consider
$$
\mu_2 := \inf_{u\in H^1_{\odd}\setminus \{0\}} 
\frac{\inp{L^+_{\omega,\gamma} f}{f}_{L^2} }{\inp{(L^-_{\omega,\gamma})^{-1} f}{f}_{L^2} }.
$$
Since $H^1_{\odd}\subset H^1_\perp$, $(L^-_{\omega,\gamma})^{-1}$ is well-defined on $H^1_{\odd}$. 
Moreover, we have $\mu_2 <0$ by Lemma \ref{xL2.2} (2). 
Hence, by the same argument 
as above, we have $-\infty < \mu_2 < 0$, and also the infimum is attained by some $\xi_2\in H^1_{\odd}\setminus \{0\}$. By variational argument, we have
$$
L^+_{\omega,\gamma} \xi_2 = \al (L^-_{\omega,\gamma})^{-1} \xi_2.
$$
Taking inner product with $\xi_2$, we have $\al=\mu_2<0$. 
Define $\mathfrak{z}_{\om,\gam} = \sqrt{-\mu_2}$. Then 
$$
\begin{pmatrix}
0 & L^-_{\omega,\gamma} \\
- L^+_{\omega,\gamma} & 0
\end{pmatrix}
\begin{bmatrix}
\xi_2\\
\pm \mathfrak{z}_{\om,\gam} (L^-_{\omega,\gamma})^{-1} \xi_2 
\end{bmatrix}
=
\pm \mathfrak{z}_{\om,\gam} 
\begin{bmatrix}
\xi_2\\
\pm \mathfrak{z}_{\om,\gam}  (L^-_{\omega,\gamma})^{-1} \xi_2
\end{bmatrix}
.
$$
Thus $\pm \mathfrak{z}_{\om,\gam} $ is an eigenvalue, with odd eigenfunctions $Z^\pm_{\om,\gam} 
:= \xi_2 \pm i \mathfrak{z}_{\om,\gam}  (L^-_{\omega,\gamma})^{-1} \xi_2$. \medskip\par
\textbf{Step 5.} Finally, we claim that no other spectrum exist in $\C\setminus i\R$. Since $Y^+_{\om,\gam} \in H^1_{\even}$ and $Z^+_{\om,\gam} \in H^1_{\odd}$, we have 
$\inp{Y^+_{\om,\gam}}{Z^+_{\om,\gam}} = 0$, 
and thus 
$Y^+_{\om,\gam}, Z^+_{\om,\gam}$ are linearly independent. 
On the other hand, by Lemma \ref{xL2.2} and the Grillakis--Shatah--Strauss theorem (see \cite[Theorem 5.8]{GSS90}), the dimension of eigenspace for eigenvalues with positive real part should be no more than $2$. Thus, $\mathfrak{y}_{\om,\gam}, \mathfrak{z}_{\om,\gam}$ are the all such eigenvalues. 
The negative side follows by symmetry. Hence the proof is complete.


%
%

\subsection{Coercivity}
\label{sec2.2}

Next we consider the quadratic form
$$
B_{\om,\gam} (u,w) := 
B^+_{\om,\gam} (\Re u,\Re w) + 
B^-_{\om,\gam} (\Im u,\Im w),\quad 
u,w\in H^1(\R),
$$
where for $f,g\in H^1(\R)$, we define
\begin{align*}
	B^+_{\om,\gam} (f,g) :=& \inp{L^+_{\om,\gam} f}{g}_{L^2} \\
=&\int_{\R} \rd_x f \rd_x g dx + \om \int_{\R} fg dx - \gam f(0)g(0) 
	-\int_{\R} p Q_{\om,\gam}^{p-1} fg dx,
	\\
	B^-_{\om,\gam} (f,g) :=& 
\inp{L^-_{\om,\gam} f}{g}_{L^2} \\
=& 
\int_{\R} \rd_x f \rd_x g dx + \om \int_{\R} fg dx - \gam f(0)g(0) 
	-\int_{\R} Q_{\om,\gam}^{p-1} fg dx.
\end{align*}
$B_{\om,\gam}$ corresponds to the Hessian of the action functional around $Q_{\om,\gam}$; namely,
\begin{equation*}
E_\gam (u) + \frac \om 2 M (u)= 
E_\gam (Q_{\om,\gam}) + \frac \om 2 M (Q_{\om,\gam}) 
+\frac 12 B_{\om,\gam} (u,u) + o(|u|^2).
\end{equation*}
By this, 
since $E_\gam$ and $M$ are conserved quantities for \eqref{deltaNLS}, 
the coercive subspace of $B_{\om,\gam}$ can be controlled by conserved quantities. 
In this section, we investigate the coercivity of $B_{\om,\gam}$ when $\gam<0$, 
in the same spirit as in \cite{DuRo10,CMM11} where the case $\gam=0$ is considered. \par
The main claim of this section is the following:

\begin{proposition}
\label{xP2.4}
Let $\gamma<0$. Set
$$
G := \Span \{
iQ_{\om,\gam},\ i\Re Y^+_{\om,\gam}, \Im Y^+_{\om,\gam}, 
i\Re Z_{\om,\gam}^+, \Im Z_{\om,\gam}^+
\}.
$$
Then there exists $c>0$ such that
\begin{equation}\label{2.11}
B_{\om,\gam} (f,f) \ge c \nor{f}{H^1}^2,\qquad \forall f\in G^\perp.
\end{equation}
\end{proposition}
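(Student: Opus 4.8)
The plan is to prove the coercivity of $B_{\om,\gam}$ on $G^\perp$ by separating the even and odd parts of $f$ and treating each separately, since both $L^\pm_{\om,\gam}$ preserve parity and hence $B_{\om,\gam}$ splits as a direct sum over the even and odd subspaces. Writing $f = f_{\even} + f_{\odd}$, the constraints defining $G^\perp$ decouple: the even part is orthogonal to $iQ_{\om,\gam}$, $i\Re Y^+_{\om,\gam}$, $\Im Y^+_{\om,\gam}$ (since $Q_{\om,\gam}$ and $Y^+_{\om,\gam}$ are even), and the odd part is orthogonal to $i\Re Z^+_{\om,\gam}$, $\Im Z^+_{\om,\gam}$. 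So it suffices to prove two separate coercivity estimates, one on the even sector and one on the odd sector. The even sector is exactly the situation already handled in \cite{GuIn22p} (the $\gam=0$ computation of \cite{DuRo10,CMM11} adapted to the delta), so I would either cite that directly or reproduce the standard argument; the genuinely new piece is the odd sector.

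For each sector the strategy is the classical one: first establish coercivity of $B^+_{\om,\gam}$ and $B^-_{\om,\gam}$ separately on the appropriate finite-codimension subspaces, then combine. For $B^-_{\om,\gam}(\Im u, \Im u) = \inp{L^-_{\om,\gam} \Im u}{\Im u}$: by Lemma \ref{xL2.2}(1), $L^-_{\om,\gam}\ge 0$ with kernel spanned by $Q_{\om,\gam}$ and essential spectrum starting at $\om$, so on $\{Q_{\om,\gam}\}^\perp$ we get $\inp{L^-_{\om,\gam} g}{g} \gtrsim \nor{g}{L^2}^2$, which upgrades to an $H^1$ bound in the usual way (add a multiple of $\nor{g}{L^2}^2$ and use the form definition to absorb $\nor{\rd_x g}{L^2}^2$, handling the $-\gam g(0)^2$ term, which is $\ge 0$ since $\gam<0$, and the $Q^{p-1}$ term by interpolation). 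Since $\Im u$ in the odd sector is orthogonal to $\Im Z^+_{\om,\gam} = \mathfrak z_{\om,\gam} (L^-_{\om,\gam})^{-1}\xi_2$ — wait, more precisely I need $\Im u \perp \{Q_{\om,\gam}\}$, but in the odd sector $\Im u$ is automatically odd hence already $\perp Q_{\om,\gam}$; so $B^-$-coercivity on the odd sector is immediate without even using the $Z$-constraint. For $B^+_{\om,\gam}(\Re u, \Re u)$ on the odd sector: $L^+_{\om,\gam}$ restricted to odd functions has, by Lemma \ref{xL2.2}(2), exactly one negative eigenvalue $\la_1(\gam)$ with odd eigenfunction $\xi_2$ (note $\la_0(\gam)$ has an even eigenfunction, so it does not appear in the odd sector) and no kernel in the odd subspace (the kernel $Q_{\om,\gam}$ is even). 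Standard spectral theory then gives: there is $c>0$ such that $\inp{L^+_{\om,\gam} g}{g}\ge c\nor{g}{H^1}^2$ for all odd $g$ with $g\perp \xi_2$.

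The remaining step is to remove the gap between "$\Re u \perp \xi_2$" and "$\Re u \perp \Re Z^+_{\om,\gam}$", i.e. to show that the single bad direction $\xi_2$ in $B^+$ is genuinely controlled by the single constraint coming from $G$. Here I would follow the argument of \cite{CMM11} (and its predecessor in the $\gam=0$ case): the key algebraic fact is that the quadratic form $B^+_{\om,\gam}$ on the odd space, while having one negative direction, becomes coercive once one restricts to the orthogonal complement of $\Re Z^+_{\om,\gam}$, because $\inp{L^+_{\om,\gam}\xi_2}{\xi_2} = \mu_2 \inp{(L^-_{\om,\gam})^{-1}\xi_2}{\xi_2} < 0$ and the relation $L^+_{\om,\gam}\xi_2 = \mu_2 (L^-_{\om,\gam})^{-1}\xi_2$ from Step 4 ties the $B^+$-negative direction to the $B^-$-structure in precisely the way needed; one decomposes a general odd $g$ with $g\perp \Re Z^+_{\om,\gam}$ along $\xi_2$ and its $B^+$-orthogonal complement and checks the cross terms have a favorable sign, using $\mu_2<0$. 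Combining the two sector estimates — $B^+$-coercivity modulo $\Re Z^+_{\om,\gam}$, and $B^-$-coercivity modulo $\Im Z^+_{\om,\gam}$ in the imaginary part, with the $iQ_{\om,\gam}$, $Y^+$ constraints handling the even sector identically — yields \eqref{2.11}.

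The main obstacle I expect is the bookkeeping in this last step: correctly identifying which of the five constraints in $G$ is "spent" killing which bad spectral direction in which sector, and verifying the sign of the cross terms when decomposing along $\xi_2$ (the argument is linear-algebraic but easy to get turned around, especially since $\Re Z^+_{\om,\gam} = \xi_2$ itself while $\Im Z^+_{\om,\gam}$ is a different vector, so the real and imaginary constraints play asymmetric roles). A secondary technical point, shared with all such proofs, is the passage from $L^2$-coercivity to $H^1$-coercivity, but for $\gam<0$ the boundary term $-\gam|f(0)|^2$ has the good sign, so this is if anything easier than in some related problems; no continuity/compactness subtlety beyond what is already used in Section \ref{sec2.1} is needed.
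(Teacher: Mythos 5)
Your route is genuinely different from the paper's. The paper does not split into parity sectors at all: it first proves the auxiliary Lemma \ref{P2.8} (coercivity of $B_{\om,\gam}$ on the complement of the three-dimensional space $G_1=\Span\{iQ_{\om,\gam},g_0,g_1\}$), and then argues by contradiction that a vector $h\in G^\perp$ with $B_{\om,\gam}(h,h)\le 0$ would, together with $iQ_{\om,\gam}$, $Y^+_{\om,\gam}$, $Z^+_{\om,\gam}$, span a four-dimensional subspace on which $B_{\om,\gam}\le 0$ (the orthogonality relations defining $G^\perp$ kill all the cross terms), which must meet $G_1^\perp$ nontrivially, a contradiction; a final weak-limit argument upgrades strict positivity to $H^1$-coercivity. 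Your parity decomposition plus a sector-wise ``one negative direction, one constraint'' criterion is a legitimate alternative, and your observations that the even sector reduces to the known case and that $B^-_{\om,\gam}$ is automatically coercive on odd functions are correct.

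However, as written your odd-sector bookkeeping is inverted, and this is exactly where the argument would fail if carried out literally. With $\inp{f}{g}=\Re\int f\overline{g}\,dx$, membership in $G^\perp$ gives $\Re f\perp \Im Y^+_{\om,\gam},\ \Im Z^+_{\om,\gam}$ and $\Im f\perp Q_{\om,\gam},\ \Re Y^+_{\om,\gam},\ \Re Z^+_{\om,\gam}$ (this is \eqref{2.12} in the paper). So the constraint available on the real part in the odd sector is $\Re f\perp \Im Z^+_{\om,\gam}=\mathfrak{z}_{\om,\gam}(L^-_{\om,\gam})^{-1}\xi_2$, not $\Re f\perp \Re Z^+_{\om,\gam}=\xi_2$ as in your final combination step; the constraint $\perp\Re Z^+_{\om,\gam}$ sits on the imaginary part, where, as you yourself note, it is not needed. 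Coercivity of $L^+_{\om,\gam}$ on odd functions orthogonal to $\xi_2$ is neither established nor available to you. The good news is that the constraint you actually have is precisely the one your key identity handles: from $L^+_{\om,\gam}\xi_2=\mu_2(L^-_{\om,\gam})^{-1}\xi_2$ one gets $\inp{(L^+_{\om,\gam})^{-1}\Im Z^+_{\om,\gam}}{\Im Z^+_{\om,\gam}}=\mathfrak{z}_{\om,\gam}^2\,\mu_2^{-1}\inp{(L^-_{\om,\gam})^{-1}\xi_2}{\xi_2}<0$, and the standard one-negative-eigenvalue criterion, applied to $L^+_{\om,\gam}$ restricted to $H^1_{\odd}$ (single negative eigenvalue $\la_1(\gam)$, no kernel there), then gives coercivity on $\{\Im Z^+_{\om,\gam}\}^\perp\cap H^1_{\odd}$. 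Note also that the ``single bad direction'' of $B^+_{\om,\gam}$ on odd functions is the eigenfunction $g_1$ of $\la_1(\gam)$, not $\xi_2$; what matters is the sign condition for the constraint vector, not an identification of the constraint with the eigenfunction. With the roles swapped (and the analogous even-sector statement, using $\Re f\perp\Im Y^+_{\om,\gam}$ and $\Im f\perp Q_{\om,\gam}$, taken from the cited reference), your plan goes through, including the routine $L^2$-to-$H^1$ upgrade.
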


%

\subsubsection{Bound of unstable dimension} 
%
We first prepare an auxiliary lemma which gives a bound of the unstable dimension for $B_{\om,\gam}$. To begin with, we write 
the eigenfunctions of $L^+_{\omega,\gamma}$ associated with $\la_0(\gamma)$, $\la_1(\gamma)$ as $g_0$, $g_1$, respectively. 
Then we claim the following.

\begin{lemma}
\label{P2.8}
Set
$$G_1 := \Span \{ iQ_{\omega,\gamma} , \ g_0 ,\ g_1 \}.$$
Then there exists $c>0$ such that
\begin{equation}\label{2.9}
B_{\omega,\gamma} (f,f) \ge c \nor{f}{H^1}^2,\qquad \forall f\in G_1^\perp.
\end{equation}
\end{lemma}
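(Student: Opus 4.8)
The plan is to decompose the quadratic form $B_{\om,\gam}$ into its real and imaginary parts, namely $B_{\om,\gam}(f,f) = B^+_{\om,\gam}(\Re f, \Re f) + B^-_{\om,\gam}(\Im f, \Im f) = \inp{L^+_{\om,\gam}(\Re f)}{\Re f}_{L^2} + \inp{L^-_{\om,\gam}(\Im f)}{\Im f}_{L^2}$, and to establish coercivity separately for each piece under the stated orthogonality conditions. First I would handle the $L^-_{\om,\gam}$ term: by Lemma \ref{xL2.2}(1), $L^-_{\om,\gam} \ge 0$ with one-dimensional kernel $\Span\{Q_{\om,\gam}\}$ and the rest of the spectrum bounded below by $\min\{\rho^-,\om\}>0$. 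The condition $f \perp iQ_{\om,\gam}$ forces $\Im f \perp Q_{\om,\gam}$ in $L^2$, so the spectral theorem gives $\inp{L^-_{\om,\gam}(\Im f)}{\Im f}_{L^2} \gtrsim \nor{\Im f}{L^2}^2$; upgrading this $L^2$-coercivity to $H^1$-coercivity is the standard argument using the explicit form of $B^-_{\om,\gam}$ (add a multiple of $\nor{\Im f}{L^2}^2$ to control $\nor{\rd_x (\Im f)}{L^2}^2$ via the exponentially localized potential term $Q_{\om,\gam}^{p-1}$).

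Next I would treat the $L^+_{\om,\gam}$ term, which is the more delicate piece since $L^+_{\om,\gam}$ has two negative eigenvalues $\la_0(\gam) < \la_1(\gam) < 0$ by Lemma \ref{xL2.2}(2), with eigenfunctions $g_0$ (even) and $g_1$ (odd). The conditions $\Re f \perp g_0$ and $\Re f \perp g_1$ remove exactly the two unstable directions; on the orthogonal complement of $\Span\{g_0,g_1\}$ the operator $L^+_{\om,\gam}$ is nonnegative, but possibly with a kernel or with spectrum accumulating at $0$ from above — so I need to rule this out. Since $\si(L^+_{\om,\gam}) = \{\la_0,\la_1\} \cup \Si^+ \cup [\om,\infty)$ with $\Si^+ \subset [\rho^+,\om)$ discrete, there is a positive spectral gap above $0$ unless $0$ itself is an eigenvalue. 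A short argument is needed to check $0 \notin \si(L^+_{\om,\gam})$: by Lemma \ref{L2.5}(1), $L^+_{\om,\gam} Q_{\om,\gam} = -(p-1)Q_{\om,\gam}^p$, and combined with Lemma \ref{L2.5}(2)–(3) (the Vakhitov–Kolokolov sign $\rd_\om \nor{Q_{\om,\gam}}{L^2}^2 < 0$ for $p>5$) one sees there is no nontrivial kernel element; alternatively, by Sturm oscillation theory $L^+_{\om,\gam}$ with two negative eigenvalues and a positive even ground state $Q_{\om,\gam}^p$-type structure cannot have $0$ as an eigenvalue. Hence $\inp{L^+_{\om,\gam} h}{h}_{L^2} \ge \delta \nor{h}{L^2}^2$ for $h \perp \Span\{g_0,g_1\}$ and some $\delta > 0$, and again this is boosted to an $H^1$-bound as above.

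Finally I would combine the two pieces: for $f \in G_1^\perp$ we have $\Re f \perp g_0, g_1$ and $\Im f \perp Q_{\om,\gam}$, so $B_{\om,\gam}(f,f) \ge c(\nor{\Re f}{H^1}^2 + \nor{\Im f}{H^1}^2) = c\nor{f}{H^1}^2$. The main obstacle I anticipate is the verification that $0 \notin \si(L^+_{\om,\gam})$ — i.e., that removing the two negative eigenfunctions $g_0, g_1$ leaves a strictly positive operator with a genuine spectral gap — since the coercivity constant degenerates if $L^+_{\om,\gam}$ has a zero-energy resonance or eigenvalue. Everything else is the routine passage from $L^2$-coercivity on a spectral subspace to $H^1$-coercivity, exploiting the exponential decay of $Q_{\om,\gam}$ so that the nonlinear potential terms are relatively compact perturbations of $-\rd_x^2 + \om - \gam\delta$.
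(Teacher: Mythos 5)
Your proposal is correct and follows the same basic strategy as the paper: split $B_{\om,\gam}(f,f)=\inp{L^+_{\om,\gam}\Re f}{\Re f}_{L^2}+\inp{L^-_{\om,\gam}\Im f}{\Im f}_{L^2}$, observe that $f\in G_1^\perp$ exactly means $\Re f\perp g_0,g_1$ and $\Im f\perp Q_{\om,\gam}$, and invoke Lemma \ref{xL2.2} to get $L^2$-coercivity on this subspace. The only genuine difference is the passage from $L^2$- to $H^1$-coercivity: you do it quantitatively, bounding the potential terms by $C\nor{\cdot}{L^2}^2$ (and using $-\gam|f(0)|^2\ge 0$) and then interpolating the resulting bound $B_{\om,\gam}(f,f)\ge \nor{\rd_x f}{L^2}^2-C\nor{f}{L^2}^2$ with the $L^2$-coercivity, whereas the paper argues by contradiction with a normalized sequence $\nor{f_n}{H^1}=1$, $B_{\om,\gam}(f_n,f_n)\to0$, deduces $\nor{f_n}{L^2}\to0$ from the $L^2$-bound, and then reads off $\nor{\rd_x f_n}{L^2}\to0$ from the explicit form of $B_{\om,\gam}$. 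Both upgrades are standard; yours has the mild advantage of producing an explicit constant, the paper's avoids choosing parameters.

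One remark on what you call the main obstacle: no separate verification that $0\notin\si(L^+_{\om,\gam})$ is needed, because Lemma \ref{xL2.2}(2) (quoted from \cite{LCFFKS08}) already lists the full spectrum as $\{\la_0(\gam),\la_1(\gam)\}\cup\Si^+(\gam)\cup[\om,\infty)$ with $\la_0<\la_1<0<\rho^+$ and $\Si^+\subset[\rho^+,\om)$, so after removing the two simple negative eigendirections $g_0,g_1$ the operator is bounded below by $\rho^+>0$ on the complement; there is no kernel and no spectrum accumulating at $0$. Moreover, the ad hoc justifications you sketch for this point are not sound as stated: the Vakhitov--Kolokolov sign $\rd_\om\nor{Q_{\om,\gam}}{L^2}^2<0$ does not by itself exclude a kernel of $L^+_{\om,\gam}$ (for $\gam=0$ the same sign condition holds while $\rd_x Q_{\om,0}\in\ker L^+_{\om,0}$), and counting negative eigenvalues via Sturm theory does not preclude a zero eigenvalue without a separate ODE argument taking the jump condition at the origin into account. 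Since the needed spectral facts are exactly what Lemma \ref{xL2.2} supplies, you should simply cite it there; with that substitution your argument is complete.
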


\begin{proof}
By Lemma \ref{xL2.2}, there exists $c>0$ such that
\begin{equation}\label{2.10}
B_{\omega,\gamma}(f,f) \ge c \nor{f}{L^2}^2 \qquad \forall f\in G_1^\perp.
\end{equation}
To upgrade it into $H^1$-coercivity, 
we 
argue by contradiction. Suppose \eqref{2.9} is not ture, then 
there exists $\{f_n\}\subset G_1^\perp$ such that
$$
\nor{f_n}{H^1} =1\qquad \text{and}\qquad B_{\omega,\gamma} (f_n,f_n) \xrightarrow{n\to\infty} 0.
$$
By compactness, taking subsequence if necessary, we have $f_n \wto  f_*$ for some $f_*\in H^1(\R)$. 
By \eqref{2.10}, we have $f_n\to 0$ in $L^2(\R)$, and thus $f_*=0$. 
Hence
$$\begin{aligned}
\nor{\rd_x f_n}{L^2}^2 
= &B_{\omega,\gamma} (f_n,f_n)  +\gamma |f_n(0)|^2 -\omega \nor{f_n}{L^2}^2 \\
&\hspace{10pt} +p\int_\R Q_{\omega,\gamma}^{p-1} |f_{1n}|^2 dx 
+\int_\R Q_{\omega,\gamma}^{p-1} |f_{2n}|^2 dx 
\xrightarrow{n\to\infty} 0,
\end{aligned}
$$
which implies $f_n \to 0$ in $H^1(\R)$. This contradicts $\nor{f_n}{H^1}=1$.
\end{proof}

\subsubsection{Proof of Proposition \ref{xP2.4}}

%
%

We divide the proof into several steps.\medskip\par
\textbf{Step 1.} Let us denote $B[f] := B_{\omega,\gamma} (f,f)$, and $Y_{\om,\gam}^\pm = 
Y_1\pm iY_2$, $Z_{\om,\gam}^\pm = Z_1 \pm iZ_2$. 
We omit subscripts $\omega,\gamma$ and just write $Y^\pm,Z^\pm$.  We first show
\begin{equation}\label{x2.8}
B [f] >0,\qquad \forall f\in G^\perp \setminus \{0\}.
\end{equation}
%
Suppose the contrary. Then there exists $h\in G^\perp\setminus \{0\}$ such that $B[h] \le 0$. 
Then $h$ satisfies
\begin{equation}\label{2.12}
\inp{Q_{\omega,\gamma}}{h_2}_{L^2} = \inp{Y_1}{h_2}_{L^2} = 
\inp{Y_2}{h_1}_{L^2} = \inp{Z_1}{h_2}_{L^2} =
\inp{Z_2}{h_1}_{L^2} = 0,\qquad B[h] \le 0.
\end{equation}
%
Now we claim
\begin{equation}\label{2.13}
\forall f \in E := \Span \{ iQ_{\omega,\gamma} ,\ Y^+,\  Z^+,\ h\} ,\qquad 
B[f] \le 0.
\end{equation}
First note that 
$$
B_{\omega,\gamma} (iQ_{\omega,\gamma} , f) = 0,\qquad \forall f \in H^1(\R).
$$
Moreover, for $f=f_1+if_2\in H^1(\R)$, we have
$$
\begin{aligned}
B_{\omega,\gamma} (Y^+ , f) 
= \inp{L^+_{\omega,\gamma}Y_1}{f_1}_{L^2} +  \inp{L^-_{\omega,\gamma}Y_2}{f_2}_{L^2} 
= -\mathfrak{y}_{\om,\gam} \inp{Y_2}{f_1}_{L^2} + \mathfrak{y}_{\om,\gam} \inp{Y_1}{f_2}_{L^2}, 
\end{aligned}
$$
$$
\begin{aligned}
B_{\omega,\gamma} (Z^+ , f) 
= \inp{L^+_{\omega,\gamma}Z_1}{f_1}_{L^2} +  \inp{L^-_{\omega,\gamma}Z_2}{f_2}_{L^2} 
= -\mathfrak{z}_{\om,\gam} \inp{Z_2}{f_1}_{L^2} + \mathfrak{z}_{\om,\gam} \inp{Z_1}{f_2}_{L^2}. 
\end{aligned}
$$
Therefore
$$
\left\{
\begin{aligned}
&B_{\omega,\gamma} (Y^+ , h) =  -\mathfrak{y}_{\om,\gam} \inp{Y_2}{h_1}_{L^2} + 
\mathfrak{y}_{\om,\gam} \inp{Y_1}{h_2}_{L^2} =0,\\
&B_{\omega,\gamma} (Z^+ , h) = -\mathfrak{z}_{\om,\gam} \inp{Z_2}{h_1}_{L^2} + \mathfrak{z}_{\om,\gam} \inp{Z_1}{h_2}_{L^2} =0,\\
&B_{\omega,\gamma} (Y^+ , Z^+) = -\mathfrak{y}_{\om,\gam} \inp{Y_2}{Z_1}_{L^2} + 
\mathfrak{y}_{\om,\gam} \inp{Y_1}{Z_2}_{L^2} =0,
\end{aligned}
\right.
$$
where first two follow from \eqref{2.12}, and the last one follows from
$$
\inp{Y_j}{Z_k}_{L^2} =0 \qquad ({}^\forall j,k=1,2)
$$
by $H^1_{\even}\perp H^1_{\odd}$
. Also,
$$
\left\{
\begin{aligned}
&B[Y^+] = -\mathfrak{y}_{\om,\gam} \inp{Y_2}{Y_1}_{L^2} + \mathfrak{y}_{\om,\gam} \inp{Y_1}{Y_2}_{L^2} =0,\\
&B[Z^+] = -\mathfrak{z}_{\om,\gam} \inp{Z_2}{Z_1}_{L^2} + \mathfrak{z}_{\om,\gam} \inp{Z_1}{Z_2}_{L^2} =0.
\end{aligned}
\right.
$$
Hence, combining these with the assumption $B[h]\le 0$, we have \eqref{2.13}.\medskip\par
%
\textbf{Step 2.} Next we claim $\dim_\R E= 4$, 
which leads to contradiction to Lemma \ref{P2.8}, since $E\cap G_1^\perp$ 
cannot be empty by $\dim_\R G_1 \le 3$. 
It suffices to show that $iQ_{\om,\gam}, Y^+, Z^+ ,h$ are linearly independent. 
Suppose that for some real numbers $\al_1,\al_2,\al_3,\al_4\in\R$, we have
$$
\al_1 iQ_{\omega,\gamma} + \al_2 Y^+ + \al_3 Z^+ + \al_4 h =0. 
$$
First we take the $B_{\om,\gam}$-coupling with $Y^-$. Then
$$
\begin{aligned}
B_{\omega,\gamma} (Y^+, Y^-) 
& 
=  -\mathfrak{y}_{\om,\gam} \inp{Y_2}{Y_1}_{L^2} - \mathfrak{y}_{\om,\gam} \inp{Y_1}{Y_2}_{L^2} 
= -2\mathfrak{y}_{\om,\gam}  \inp{Y_1}{Y_2}_{L^2} \\
&= 
-2 \mathfrak{y}_{\om,\gam}^2 \inp{Y_1}{ (L^-_{\omega,\gamma})^{-1} Y_1}_{L^2} \neq 0
\end{aligned}
$$
since $Y_1\in H^1_{\even} \setminus 0$ with $Y_1\perp Q_{\omega,\gamma}$. 
Moreover, 
$$
B_{\omega,\gamma} (Z^+, Y^-) 
=  -\mathfrak{z}_{\om,\gam} \inp{Z_2}{Y_1}_{L^2} - \mathfrak{z}_{\om,\gam} \inp{Z_1}{Y_2}_{L^2} =0,
$$
$$
\begin{aligned}
B_{\omega,\gamma} (h, Y^-) &= 
\inp{L^+_{\omega,\gamma} Y_1}{h_1}_{L^2} - \inp{L^-_{\omega,\gamma} Y_2}{h_2}_{L^2} \\
&=
-\mathfrak{y}_{\om,\gam} \inp{Y_2}{h_1}_{L^2} - \mathfrak{y}_{\om,\gam} \inp{Y_1}{h_2}_{L^2} =0.
\end{aligned}
$$
Hence we have $\al_2=0$. Next we consider the coupling with $Z^-$. Then
$$
\begin{aligned}
B_{\omega,\gamma} (Z^+, Z^-) 
&
=  -\mathfrak{z}_{\om,\gam} \inp{Z_2}{Z_1}_{L^2} - \mathfrak{z}_{\om,\gam} \inp{Z_1}{Z_2}_{L^2} 
= -2\mathfrak{z}_{\om,\gam} \inp{Z_1}{Z_2}_{L^2} \\
&= 
-2 \mathfrak{z}_{\om,\gam}^2 \inp{Z_1}{ (L^-_{\omega,\gamma})^{-1} Z_1}_{L^2} \neq 0,
\end{aligned}
$$
$$
\begin{aligned}
B_{\omega,\gamma} (h, Z^-) &= 
\inp{L^+_{\omega,\gamma} Z_1}{h_1}_{L^2} - \inp{L^-_{\omega,\gamma} Z_2}{h_2}_{L^2} \\
&=
-\mathfrak{z}_{\om,\gam} \inp{Z_2}{h_1}_{L^2} - \mathfrak{z}_{\om,\gam} \inp{Z_1}{h_2}_{L^2} =0.
\end{aligned}
$$
Thus we have $\al_3=0$. Inner product with $iQ_{\omega,\gamma}$ in $L^2$ yields 
$\al_1=0$. Since $h\neq 0$, we have $\al_4=0$, which concludes the claim.\medskip\par

\textbf{Step 3.} We next show the coercivity \eqref{2.11}. 
%
Suppose the contrary. Since $B[f]>0$ in $G^\perp$ by Step 1, we have
$$
\inf_{f\in G^\perp\setminus \{0\}} \frac{B_{\omega,\gamma} (f,f)}{\nor{f}{H^1(\R)}^2} =0.
$$
Thus there exists $\{f_n\}\subset G^\perp$ s.t.
$$
\nor{f_n}{H^1(\R)} =1\qquad \text{and}\qquad B_{\omega,\gamma} (f_n,f_n) \xrightarrow{n\to\infty} 0.
$$
By compactness, taking subsequence if necessary, we have $f_n \wto  f_*$ for some $f_*$ in $H^1(\R)$. 
Then the same argument as in the proof of Lemma \ref{P2.8} yields
$$
B_{\omega,\gamma} (f_*,f_*) \le \liminf_{n\to\infty} B_{\omega,\gamma} (f_n,f_n) = 0.
$$
On the other hand, since $G^\perp$ is weakly closed, we have $f_*\in G^\perp$. 
Hence \eqref{x2.8} implies $B_{\omega,\gamma} (f_*,f_*)=0$, and thus $f_*=0$. However,
$$
\begin{aligned}
&\hspace{20pt}\nor{\rd_x f_n}{L^2}^2 + \omega \nor{f_n}{L^2}^2 \\
&= B_{\omega,\gamma} (f_n,f_n)  +\gamma |f_n(0)|^2 
+p\int_\R Q_{\omega,\gamma}^{p-1} |f_{1n}|^2 dx
+\int_\R Q_{\omega,\gamma}^{p-1} |f_{2n}|^2 dx
 \xrightarrow{n\to\infty} 0,
\end{aligned}
$$
which contradicts the assumption $\nor{f_n}{H^1(\R)}=1$. 
Hence the proof is complete.

\subsubsection{Equivalent formulation of coercivity}

For later application, we derive an equivalent formulation of 
Proposition \ref{xP2.4}. 

\begin{proposition}[Coercivity for $\gamma<0$]
\label{P2.11}
There exist $c>0$ such that the following is true:
\begin{equation}\label{x2.11}
\begin{aligned}
c\nor{f}{H^1}^2 \le B_{\om,\gam} (f,f) + 
c^{-1} &\left(
\inp{f}{iQ_{\om,\gam}}^2 + \inp{f}{iY^+_{\om,\gam}}^2 
+ \inp{f}{iY^-_{\om,\gam}}^2 \right.\\
&\hspace{80pt} \left. + \inp{f}{iZ^+_{\om,\gam}}^2
+ \inp{f}{iZ^-_{\om,\gam}}^2 \right) .
\end{aligned}
\end{equation}
\end{proposition}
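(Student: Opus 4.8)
The plan is to deduce \eqref{x2.11} from Proposition \ref{xP2.4} by the standard device that trades the orthogonality constraint $f\in G^\perp$ for a quadratic penalization, after checking that the five terms in the correction on the right-hand side of \eqref{x2.11} are comparable to the $L^2$-pairings of $f$ with the five generators of $G$.

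First I would record the relevant algebra. Keeping the notation of the proof of Proposition \ref{xP2.1} — $Y^\pm_{\om,\gam}=Y_1\pm iY_2$, $Z^\pm_{\om,\gam}=Z_1\pm iZ_2$ with $Y_1,Y_2\in H^1_{\even}$ and $Z_1,Z_2\in H^1_{\odd}$ — and writing $f=f_1+if_2$, a direct computation with $\inp{u}{w}=\Re\int u\overline w\,dx$ gives
\begin{align*}
&\inp{f}{iQ_{\om,\gam}} = \inp{f_2}{Q_{\om,\gam}},\quad \inp{f}{i\Re Y^+_{\om,\gam}} = \inp{f_2}{Y_1},\quad \inp{f}{\Im Y^+_{\om,\gam}} = \inp{f_1}{Y_2},\\
&\inp{f}{iY^\pm_{\om,\gam}} = \inp{f_2}{Y_1}\mp\inp{f_1}{Y_2},
\end{align*}
and likewise with $Z$ in place of $Y$; hence $\inp{f}{iY^+_{\om,\gam}}^2+\inp{f}{iY^-_{\om,\gam}}^2 = 2\inp{f}{i\Re Y^+_{\om,\gam}}^2+2\inp{f}{\Im Y^+_{\om,\gam}}^2$, and similarly for $Z$. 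Consequently the correction term in \eqref{x2.11} is comparable, with absolute constants, to $\sum_{j=1}^5\inp{f}{e_j}^2$, where $\{e_1,\dots,e_5\}$ is the generating set of $G$ from Proposition \ref{xP2.4}, so it is enough to prove \eqref{x2.11} with $\sum_{j=1}^5\inp{f}{e_j}^2$ in place of the stated correction. (Restricting such an inequality to $f\in G^\perp$ recovers Proposition \ref{xP2.4}, which is the sense in which the two are equivalent.)

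Next I would split $f=g+r$, where $g=Pf$ is the $L^2$-orthogonal projection onto the finite-dimensional space $G\subset\boD(-\Del_\gam)\subset H^1(\R)$ and $r=f-g\in G^\perp\cap H^1(\R)$. The $e_j$ are $\R$-linearly independent: the ``imaginary'' generators $iQ_{\om,\gam},i\Re Y^+_{\om,\gam},i\Re Z^+_{\om,\gam}$ because $Q_{\om,\gam},Y_1$ are even while $Z_1$ is odd and $Y_1\perp Q_{\om,\gam}$ with $Y_1\neq 0$; the ``real'' generators $\Im Y^+_{\om,\gam},\Im Z^+_{\om,\gam}$ by parity and nonvanishing; and the two families by $\C\cong\R^2$. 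Hence the Gram matrix $(\inp{e_j}{e_k})_{j,k}$ is invertible, $P$ is bounded on $H^1(\R)$, and one has $\nor{g}{H^1}^2\le C\sum_j\inp{g}{e_j}^2 = C\sum_j\inp{f}{e_j}^2$ and $\nor{f}{H^1}^2\le 2\nor{g}{H^1}^2+2\nor{r}{H^1}^2$. Then I would expand $B_{\om,\gam}(f,f)=B_{\om,\gam}(r,r)+2B_{\om,\gam}(g,r)+B_{\om,\gam}(g,g)$, apply Proposition \ref{xP2.4} to the first term, bound the other two by $C\nor{g}{H^1}^2$ and $C\nor{g}{H^1}\nor{r}{H^1}$ using that $B_{\om,\gam}$ is a bounded bilinear form on $H^1(\R)$ (the only nonobvious term being the boundary term $-\gam f(0)g(0)$, controlled via $H^1(\R)\hookrightarrow L^\infty(\R)$), absorb the cross term with Young's inequality to reach $B_{\om,\gam}(f,f)\ge\tfrac c2\nor{r}{H^1}^2-C'\nor{g}{H^1}^2$, and finally combine with the two displayed estimates to conclude $\tfrac c4\nor{f}{H^1}^2\le B_{\om,\gam}(f,f)+C''\sum_j\inp{f}{e_j}^2$, which is \eqref{x2.11} after the reduction of the first step.

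I do not expect a genuine obstacle: all the analytic content is in Proposition \ref{xP2.4}. The only points requiring a little care are that $B_{\om,\gam}$ extends to a bounded bilinear form on $H^1(\R)$ — so that the pieces $B_{\om,\gam}(g,g)$ and $B_{\om,\gam}(g,r)$ coming off the finite-dimensional $G$ are harmless — and the routine linear algebra identifying the correction terms and inverting the Gram matrix.
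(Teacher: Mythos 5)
Your proof is correct, but it follows a genuinely different route from the paper's. The paper proves Proposition \ref{P2.11} by the same compactness scheme already used for Lemma \ref{P2.8} and for Step 3 of Proposition \ref{xP2.4}: assuming the inequality fails, it takes $f_n$ with $\nor{f_n}{H^1}=1$ and $B_{\om,\gam}(f_n,f_n)+n\sum_{j=1}^5\inp{f_n}{g_j}^2\le \tfrac1n$ (with $g_j$ the five functions $iQ_{\om,\gam}, iY^{\pm}_{\om,\gam}, iZ^{\pm}_{\om,\gam}$ appearing in \eqref{x2.11}), extracts a weak $H^1$ limit $f_*$, uses weak lower semicontinuity together with compactness of the $\delta$- and potential terms to get $B_{\om,\gam}(f_*,f_*)\le 0$ and $\inp{f_*}{g_j}=0$, concludes $f_*\in G^{\perp}$ and hence $f_*=0$ from Proposition \ref{xP2.4}, and then contradicts the normalization by showing $f_n\to 0$ strongly in $H^1$. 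You instead argue directly and quantitatively: you first identify the correction in \eqref{x2.11} with the pairings against the generators of $G$ (your span identity $\Span_{\R}\{iY^+_{\om,\gam},iY^-_{\om,\gam}\}=\Span_{\R}\{i\Re Y^+_{\om,\gam},\Im Y^+_{\om,\gam}\}$ makes explicit a point the paper uses only implicitly when it passes from $\inp{f_*}{g_j}=0$ to $f_*\in G^{\perp}$), then split $f$ into its $L^2$-projection onto the finite-dimensional $G$ plus a remainder in $G^{\perp}\cap H^1$, apply Proposition \ref{xP2.4} to the remainder, and absorb the cross terms via boundedness of $B_{\om,\gam}$ on $H^1\times H^1$ (the $\gam f(0)g(0)$ term being handled by $H^1(\R)\hookrightarrow L^{\infty}(\R)$). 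Your route avoids a second compactness extraction and yields constants traceable to the constant of Proposition \ref{xP2.4}, the Gram matrix of the generators, and the $H^1$-bound of $B_{\om,\gam}$; the paper's route is softer but shorter given the machinery already in place, and both rest entirely on Proposition \ref{xP2.4}. Two minor points in your write-up: the nonvanishing of $\Im Y^+_{\om,\gam}$ and $\Im Z^+_{\om,\gam}$, which you invoke for linear independence, deserves a word (it follows from $L^-_{\om,\gam}\Im Y^{+}_{\om,\gam}=\mathfrak{y}_{\om,\gam}\,\Re Y^{+}_{\om,\gam}$, so $\Im Y^+_{\om,\gam}=0$ would force $\xi_1=0$, and similarly for $Z$); and in fact linear independence is not needed for your key estimate $\nor{g}{H^1}^2\le C\sum_j\inp{g}{e_j}^2$ on $G$, since the map $g\mapsto(\inp{g}{e_j})_j$ is injective on $G$ as soon as the $e_j$ span $G$.
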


\begin{proof}
For simplicity of notation, we write 
$iQ_{\om,\gam}, iY^+_{\om,\gam}, iY^-_{\om,\gam}, iZ^+_{\om,\gam}, iZ^-_{\om,\gam}$ as $g_1,\cdots ,g_5$. 
We argue by contradiction.  
Suppose 
\eqref{x2.11} is not true. Then for any $n\in\N$, there exists $f_n\in H^1$ such that
$$
\frac{1}{n} \nor{f_n}{H^1}^2 \ge 
B_{\om,\gam} (f_n,f_n) + 
n \sum_{j=1}^5 \inp{f_n}{g_j}_{L^2}^2.
$$
By normalization, we may assume $\nor{f_n}{H^1}=1$ for all $n$. Then by taking subsequence, we have $f_n\wto f_*$ for some $f_*\in H^1(\R)$. 
Then we have
$$
B_{\om,\gam} (f_*,f_*) \le \liminf_{n\to\infty} B_{\om,\gam} (f_n,f_n) \le 
\liminf_{n\to\infty} \frac 1n = 0.
$$
On the other hand, noting that $B_{\om,\gam} (f_n,f_n)$ 
is bounded from below, we have
$$
\sum_{j=1}^5 \inp{f_*}{g_j}_{L^2}^2
=
\lim_{n\to\infty} \sum_{j=1}^5 \inp{f_n}{g_j}_{L^2}^2 
\le 
\liminf_{n\to\infty}
\frac 1n \left( \frac 1n  - B_{\om,\gam}(f_n,f_n) \right) = 0.
$$
Hence $f_*\in G^\perp$, and thus $f_*=0$ by Proposition \ref{xP2.4}. 
However, 
$$
\begin{aligned}
&\hspace{20pt}\nor{\rd_x f_n}{L^2}^2 + \om \nor{f_n}{L^2}^2 \\
&= B_{\om,\gam} (f_n,f_n)  +\gam |f_n(0)|^2 
+p\int_\R Q_{\om,\gam}^{p-1} |f_{1n}|^2 dx
+\int_\R Q_{\om,\gam}^{p-1} |f_{2n}|^2 dx
 \xrightarrow{n\to\infty} 0,
\end{aligned}
$$
which contradicts that $\nor{f_n}{H^1(\R)}=1$ for all $n$. Hence the proof is complete.
\end{proof}

When $\gamma=0$, we have the following coercivity result. 

\begin{proposition}[Coercivity for $\gamma=0$]
\label{P2.10}
There exists $c>0$ such that 
\begin{align*}
	c\|f\|_{H^1}^2 
	&\leq B_{\omega,0}(f,f)
	+c^{-1} \left( \langle \partial_x Q_{\omega,0} , f \rangle^2
	+\langle i Q_{\omega,0} , f \rangle^2 \right.
	\\
	&\hspace{130pt} \left. +\langle i Y_{\omega,0}^+ , f \rangle^2
	+\langle i Y_{\omega,0}^- , f \rangle^2 \right)
\end{align*}
for any $f \in H^1(\mathbb{R})$. 
\end{proposition}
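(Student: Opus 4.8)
The plan is to run the same two-step scheme used for Propositions \ref{xP2.4} and \ref{P2.11}, adapted to the translation-invariant case $\gam=0$. The only structural change is that, because \eqref{NLS} is translation invariant, $L^+_{\om,0}\rd_x Q_{\om,0}=0$, so the odd function $\rd_x Q_{\om,0}\in\ker L^+_{\om,0}$ takes over the role played for $\gam<0$ by the odd eigenfunction $g_1$ of $L^+_{\om,\gam}$ and by the odd unstable mode $Z^\pm_{\om,\gam}$; in particular $\rd_x Q_{\om,0}$ contributes a direction on which $B_{\om,0}$ \emph{vanishes} (exactly like $iQ_{\om,0}$) rather than a negative one, and by Proposition \ref{xP2.1}(1) the only unstable eigenvalues of $\boL_{\om,0}$ are $\pm\mathfrak{y}_{\om,0}$, with even eigenfunctions $Y^\pm_{\om,0}$.

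First I would establish the finite-codimension coercivity: setting
$$
G_0:=\Span\{\,iQ_{\om,0},\ \rd_x Q_{\om,0},\ iY^+_{\om,0},\ iY^-_{\om,0}\,\}
$$
(which also equals $\Span\{\,iQ_{\om,0},\ \rd_x Q_{\om,0},\ i\Re Y^+_{\om,0},\ \Im Y^+_{\om,0}\,\}$, since $iY^++iY^-=2i\Re Y^+$ and $iY^+-iY^-=-2\,\Im Y^+$), there is $c>0$ with $B_{\om,0}(f,f)\ge c\nor{f}{H^1}^2$ for every $f\in G_0^\perp$. This is the known $\gam=0$ coercivity of \cite{DuRo10,CMM11}; one may also recover it by repeating the proof of Proposition \ref{xP2.4} with $G_1:=\Span\{iQ_{\om,0},g_0,\rd_x Q_{\om,0}\}$ in place of $\Span\{iQ_{\om,\gam},g_0,g_1\}$, where $g_0$ is the even eigenfunction of $L^+_{\om,0}$ for its unique negative eigenvalue (a classical fact) and $\ker L^+_{\om,0}=\Span\{\rd_x Q_{\om,0}\}$. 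Indeed, the Grillakis--Shatah--Strauss bound still bounds the unstable dimension by $2$, there is no odd unstable mode, and the dimension count in that proof (Steps 1--2) goes through verbatim with $\rd_x Q_{\om,0}$ in place of $Z^+_{\om,\gam}$: one uses $B_{\om,0}(\rd_x Q_{\om,0},\cdot)\equiv 0$ (so $\rd_x Q_{\om,0}$ behaves like $iQ_{\om,0}$) together with parity to see that $E:=\Span\{iQ_{\om,0},\rd_x Q_{\om,0},Y^+_{\om,0},h\}$ is $4$-dimensional with $B_{\om,0}\le 0$ on it, contradicting the $L^2$-coercivity on $G_1^\perp$ since $\dim_\R G_1\le 3$.

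With this in hand, the stated inequality follows by the contradiction/compactness argument in the proof of Proposition \ref{P2.11}, applied with its five penalization functions replaced by the four functions $\rd_x Q_{\om,0},\,iQ_{\om,0},\,iY^+_{\om,0},\,iY^-_{\om,0}$: if the inequality failed there would be $f_n\in H^1$ with $\nor{f_n}{H^1}=1$, $B_{\om,0}(f_n,f_n)\to 0$, and $\inp{f_n}{g}\to 0$ for each of these four $g$; a weak $H^1$-limit $f_*$ then lies in $G_0^\perp$ and satisfies $B_{\om,0}(f_*,f_*)\le 0$, so $f_*=0$ by the previous paragraph, whence $\int_\R Q_{\om,0}^{p-1}|\Re f_n|^2\,dx\to 0$ and $\int_\R Q_{\om,0}^{p-1}|\Im f_n|^2\,dx\to 0$ by the Rellich--Kondrachov theorem, and the identity
$$
\nor{\rd_x f_n}{L^2}^2+\om\nor{f_n}{L^2}^2 = B_{\om,0}(f_n,f_n)+p\int_\R Q_{\om,0}^{p-1}|\Re f_n|^2\,dx+\int_\R Q_{\om,0}^{p-1}|\Im f_n|^2\,dx
$$
then forces $\nor{\rd_x f_n}{L^2}^2+\om\nor{f_n}{L^2}^2\to 0$, contradicting $\nor{f_n}{H^1}=1$. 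There is no genuinely new difficulty beyond Propositions \ref{xP2.4}--\ref{P2.11}; the one point needing care is the bookkeeping of which directions are removed — the odd pair $Z^\pm_{\om,\gam}$ and the odd eigenfunction $g_1$ present for $\gam<0$ collapse, when $\gam=0$, into the single null mode $\rd_x Q_{\om,0}$ — together with verifying the accompanying parity-based orthogonality and linear-independence relations.
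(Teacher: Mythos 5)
Your proposal is correct, but it takes a genuinely different route from the paper: the paper does not prove Proposition \ref{P2.10} at all, it simply invokes the known $\gam=0$ coercivity from \cite[(3.6)]{CMM11} (with proofs in \cite{DuMe08,DuRo10}), whereas you re-derive it by transplanting the paper's own $\gam<0$ machinery (Lemma \ref{P2.8}, Propositions \ref{xP2.4} and \ref{P2.11}) to the translation-invariant setting. The substitution you make is exactly the right one: since $L^+_{\om,0}\rd_x Q_{\om,0}=0$, the odd direction $\rd_x Q_{\om,0}$ is a \emph{null} direction of $B_{\om,0}$ (like $iQ_{\om,0}$), so it replaces both the odd negative eigenfunction $g_1$ and the odd unstable pair $Z^\pm_{\om,\gam}$ of the $\gam<0$ case; your dimension count ($E=\Span\{iQ_{\om,0},\rd_x Q_{\om,0},Y^+_{\om,0},h\}$ four-dimensional with $B_{\om,0}\le 0$, against $\dim_\R G_1\le 3$) and the orthogonality bookkeeping (evenness of $Y^\pm_{\om,0}$ versus oddness of $\rd_x Q_{\om,0}$, $\inp{Y_1}{Q_{\om,0}}=0$ from Proposition \ref{xP2.1}(3), $B_{\om,0}(Y^+,Y^-)=-2\mathfrak{y}_{\om,0}^2\inp{Y_1}{(L^-_{\om,0})^{-1}Y_1}\neq 0$) all check out, as does the final penalization/compactness step. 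What your route buys is self-containedness and a uniform treatment of $\gam<0$ and $\gam=0$; what it costs is that you must import the classical spectral facts for $L^\pm_{\om,0}$ (single negative eigenvalue of $L^+_{\om,0}$ with even eigenfunction, $\ker L^+_{\om,0}=\Span\{\rd_x Q_{\om,0}\}$, $L^-_{\om,0}\ge 0$ with kernel $\Span\{Q_{\om,0}\}$), which the paper's Lemma \ref{xL2.2} only records for $\gam<0$ — these are standard (Weinstein) and are precisely what the cited references rest on, so this is a citation swap rather than a gap. Two cosmetic points: the GSS bound is not actually needed in your Step 1 (the contradiction is directly with the $L^2$-coercivity on $G_1^\perp$), and in Step 2 the correct negation gives, after normalization, $B_{\om,0}(f_n,f_n)<\tfrac1n$ and $n\sum_j\inp{f_n}{g_j}^2<\tfrac1n$, hence only $\limsup_n B_{\om,0}(f_n,f_n)\le 0$ rather than convergence to $0$; this is all your argument uses, so the conclusion stands.
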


\begin{proof}
This is already obtained in \cite[(3.6)]{CMM11}. See \cite{DuMe08,DuRo10} for the proof. 
\end{proof}

%


\section{Proofs}
\label{sec3}

In what follows, we assume that there exists $k_0 \in \llbracket 1, K \rrbracket$ such that $v_{k_0}=0$. 
We write $Q_k := Q_{\om_k,\gam_k}$. 
We will apply the theorems in Section \ref{sec2} to our setting. 
For $k\in \llbracket 1,K\rrbracket$, we denote
\begin{align*}
	&\mathcal{L}_{k}:=\mathcal{L}_{\omega_{k},\gamma_{k}},
	\quad 
	\mathfrak{y}_{k}:=\mathfrak{y}_{\omega_{k},\gamma_{k}}, 
	\quad 
	Y_{k}^{\pm}(x):=Y_{\omega_{k},\gamma_{k}}^{\pm}(x).
\end{align*}
For $k=k_0$, where $\gam_{k_0}=\gam< 0$, we set
\begin{align*}
\mathfrak{z}_{k}:=\mathfrak{z}_{\omega_{k},\gamma_{k}},\quad
Z_{k}^{\pm}(x):=Z_{\omega_{k},\gamma_{k}}^{\pm}(x).
\end{align*}

\subsection{Uniform backward estimate}

\label{sec3.1}

In this section, we introduce a uniform backward estimate, which implies Theorem \ref{thm1.1} by a compactness argument as in \cite{CMM11}.  

Let 
\begin{align*}
	X_{k}(t,x):= x-v_{k}t -x_{k},
	\quad 
	\Theta_{k}(t,x):= \frac{1}{2}v_{k}x - \frac{1}{4}|v_{k}|^{2}t +\omega_{k}t + \theta_{k}.
\end{align*}
We define
\begin{align*}
	\mathcal{Y}_{k}^{\pm}(t,x) &:=  Y_{k}^{\pm}(X_{k}(t,x)) e^{i\Theta_{k}(t,x)}
	=Y_{\omega_{k},\gamma_{k}}^{\pm}(x-v_{k}t -x_{k}) e^{i\left(\frac{1}{2}v_{k}x - \frac{1}{4}|v_{k}|^{2}t +\omega_{k}t + \theta_{k}\right)},
	\\
	\mathcal{Z}_{k}^{\pm}(t,x) &:=  Z_{k}^{\pm}(X_{k}(t,x)) e^{i\Theta_{k}(t,x)}
	= Z_{\omega_{k},\gamma_{k}}^{\pm}(x-v_{k}t -x_{k}) e^{i\left(\frac{1}{2}v_{k}x - \frac{1}{4}|v_{k}|^{2}t +\omega_{k}t + \theta_{k}\right)}.
\end{align*}
We let $T_{n}$ be an increasing time sequence with $T_{n} \to \infty$.
We will show the following uniform backward estimate. 

\begin{proposition}[Uniform backward estimate]
\label{ube}
There exist $n_{0} \in \mathbb{N}$, $c_{0}>0$, $T_{0}>0$, $C>0$ (independent of $n$) such that the following holds: for each $n\geq n_{0}$, there exists $\bm{\lambda_{n}}:=(\bm{\alpha_{n}},\bm{\beta_{n}})=(\alpha_{1,n}^{\pm},..., \alpha_{K,n}^{\pm}, \beta_{n}^{\pm})_{\pm} \in \mathbb{R}^{2K+2}$ with $|\bm{\lambda_{n}}|\leq e^{-c_{0}T_{n}}$ and such that the solution $u_{n}$ to 
\begin{align}
\label{eq}
	\begin{cases}
	i \partial_{t} u_{n} + \partial_{x}^{2} u_{n} + \gamma \delta u_{n} +|u_{n}|^{p-1}u_{n}=0,
	\\
	u_{n}(T_{n})=\mathcal{R}(T_{n}) + i \sum_{k\in \llbracket 1,K \rrbracket, \pm} \alpha_{k,n}^{\pm} \mathcal{Y}_{k}^{\pm}(T_{n}) + i \sum_{\pm} \beta_{n}^{\pm} \mathcal{Z}_{k_{0}}^{\pm}(T_{n})
	\end{cases}
\end{align}
is defined on $[T_{0},T_{n}]$, and satisfies 
\begin{equation}\label{eq2.2}
	\|u_{n}(t) - \mathcal{R}(t)\|_{H^{1}} \leq C e^{-c_{0}t}
\end{equation}
for any $t \in [T_{0},T_{n}]$. 
\end{proposition}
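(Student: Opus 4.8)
The plan is to construct the solutions $u_n$ by solving the equation backward in time from $t=T_n$ and to close a uniform bootstrap estimate on the "stable" part of the remainder, while choosing the $2K+2$ parameters $\bm{\lambda_n}=(\bm{\alpha_n},\bm{\beta_n})$ by a topological (Brouwer-type) argument so as to kill the $2K+2$ unstable directions. Concretely, I would write $u_n = \mathcal{R} + \varepsilon_n$ and derive the equation $\partial_t\varepsilon_n = \sum_k \boL_k \varepsilon_n + (\text{interaction terms between distinct } \mathcal{R}_j) + (\text{error from the delta meeting moving solitons}) + O(\varepsilon_n^2)$. The key point is that, away from $t=t_{k_0}=$ (the fixed soliton's position) the delta potential only sees the moving solitons $\mathcal{R}_k$, $v_k\neq0$, through exponentially small (in $t$, since $|v_k t - v_k'' t|\gtrsim t$ and the soliton profiles are exponentially localized) terms, so all the genuinely new difficulty is concentrated at the single unmoving soliton $\mathcal{R}_{k_0}$, whose linearized operator $\boL_{k_0}=\boL_{\om_{k_0},\gam}$ has the two unstable eigenvalues $\pm\mathfrak{y}_{k_0}$, $\pm\mathfrak{z}_{k_0}$ from Proposition~\ref{xP2.1}.

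The steps, in order, would be: \textbf{(1)} A \emph{modulation} step (to be carried out in Section~\ref{sec3.2}): for a solution staying $H^1$-close to $\mathcal{R}$, decompose it as $u_n(t) = \sum_k Q_{\om_k(t),\gam_k}(\cdot - y_k(t)) e^{i\Theta_k(t,\cdot)} + \varepsilon_n(t)$ with time-dependent modulation parameters, chosen so that $\varepsilon_n(t)$ satisfies orthogonality conditions placing it (in the relevant $k_0$ block) in the space $G^\perp$ of Proposition~\ref{xP2.4}; the modulation parameters' ODEs have quadratic-plus-exponential right-hand sides. \textbf{(2)} Project $\varepsilon_n$ onto the unstable modes: set $a_{k}^\pm(t) := \langle \varepsilon_n(t), i\mathcal{Y}_k^\pm(t)\rangle$ and $b^\pm(t):=\langle \varepsilon_n(t), i\mathcal{Z}_{k_0}^\pm(t)\rangle$; compute that, modulo quadratic and exponentially small terms, $\dot a_k^\pm = \pm\mathfrak{y}_k a_k^\pm + \cdots$, $\dot b^\pm = \pm\mathfrak{z}_{k_0} b^\pm + \cdots$, so the "$-$" components are stable and the "$+$" components unstable going forward (hence stable backward in an appropriate sense). \textbf{(3)} Using the coercivity of $B_{\om,\gam}$ on $G^\perp$ (Proposition~\ref{P2.11}) together with conservation of $E_\gamma + \sum \tfrac{\om_k}{2}M$ localized near each soliton, derive a differential inequality for a Lyapunov-type quantity $\mathcal{N}(t)\sim \|\varepsilon_n(t)\|_{H^1}^2$ of the schematic form $\frac{d}{dt}\mathcal{N} \lesssim \mathcal{N}^{3/2} + (\text{unstable modes})^2 + e^{-c t}$. \textbf{(4)} Run the bootstrap backward from $T_n$: under the a priori assumption $\|\varepsilon_n(t)\|_{H^1}\le C e^{-c_0 t}$ on $[T_0,T_n]$, the Lyapunov inequality and the linear ODEs for $a_k^-,b^-$ improve the constant, provided the unstable coordinates $a_k^+(t), b^+(t)$ can be kept of size $\le e^{-c_0 t}$ for all $t$. \textbf{(5)} The topological step: the final data in \eqref{eq} is chosen precisely so that at $t=T_n$ one has $a_k^+(T_n) = \alpha_{k,n}^+$, etc. (up to the already-controlled stable/modulation corrections); regarding $(\alpha_{k,n}^+, \beta_n^+)$ (and their $-$ partners, which are actually fixed by the stable flow) as a map into $\mathbb{R}^{2K+2}$, a Brouwer fixed point / degree argument on the ball of radius $e^{-c_0 T_n}$ produces a choice of $\bm{\lambda_n}$ for which all unstable coordinates vanish identically and hence the solution exists and satisfies \eqref{eq2.2} on $[T_0,T_n]$.

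The main obstacle I expect is \textbf{Step (3)–(4)}: getting a clean \emph{uniform-in-$n$} coercivity/energy estimate at the unmoving soliton when the delta potential is present. One must localize the conserved action around $\mathcal{R}_{k_0}$ (multiplying by cutoffs), and the delta term $-\gamma|f(0)|^2$ interacts with the cutoff exactly at $x=0$ where $Q_{\om_{k_0},\gam}$ is singular in its first derivative; one needs the jump condition in $\boD(-\Delta_\gamma)$ to handle the boundary terms, and must verify that the error from cutting off does not destroy the coercivity of $B_{\om_{k_0},\gam}$ on $G^\perp$. A secondary difficulty is bookkeeping the $2K+2$ unstable directions simultaneously in the Brouwer argument — one must check the relevant map is continuous and has the right boundary behavior (nonvanishing degree), which requires that the unstable coordinates at time $t$ depend continuously on the data $\bm{\lambda_n}$, i.e., continuous dependence of \eqref{eq} in $H^1$ (and, for the compactness passage to $n=\infty$ that yields Theorem~\ref{thm1.1}, in $H^s$ for some $s<1$, whence the appeal to Appendix~\ref{appB}).
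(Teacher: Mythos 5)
Your proposal takes essentially the same route as the paper's proof: time-dependent modulation with the orthogonality conditions of Lemma \ref{Lemma:ModInd}, exponential ODE estimates for the projections $a_k^\pm,b^\pm$ onto the eigenmodes (Lemma \ref{lem2.8}), a localized boosted energy $\mathcal{G}$ whose quadratic part is controlled by the coercivity results of Propositions \ref{P2.11} and \ref{P2.10} (Lemmas \ref{lem3.9}--\ref{lem3.12}), a backward bootstrap starting at $t=T_n$, and a choice of the final data via Proposition \ref{P3.3} together with a Brouwer/exit-time argument for the backward-growing modes, exactly as in C\^ote--Martel--Merle. Only minor points differ: the paper does not modulate the frequencies $\omega_k$; in the exit-time argument the unstable coordinates are not made to vanish identically but are only kept of size $e^{-\frac{3}{2}c_0 t}$ down to $T_0$; and your stable/unstable labeling of the $\pm$ modes in Step (2) is inconsistent with Step (5) --- it is the modes that grow backward in time that require the topological choice of final data.
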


\begin{proof}[Proof of Theorem \ref{thm1.1} assuming Proposition \ref{ube}]
The proof is very similar to \cite{CMM11}, so we only give a sketch. 
By the uniform backward estimate, we obtain the following compactness property
\begin{align*}
	\lim_{A \to \infty} \sup_{n \in \mathbb{N}} \int_{|x|\geq A} |u_{n}(T_{0},x)|^{2}dx =0.
\end{align*}
Moreover, it gives us that $\{u_{n}(T_0)\}_{n \in \mathbb{N}}$ is a bounded sequence in $H^1(\mathbb{R})$ and thus we may assume that $u_{n}(T_0)$ converges to a function $\varphi_0$ weakly in $H^1(\mathbb{R})$. 
By the above compactness property, the compact Sobolev embedding $H^{1}(|x| \leq A) \hookrightarrow L^{2}(|x|\leq A)$, and the Sobolev interpolation, it holds that $u_n(T_0)$ converges to $\varphi_0$ strongly in $H^s$ for $0<s<1$. Let $u_*$ be the $H^1$-solution to \eqref{deltaNLS} with $u_*(T_0)=\varphi_0$ and $T_*$ be the forward maximal existence time. By the local well-posedness in $H^s$ for $1/2<s<1$ of \eqref{deltaNLS} (see Appendix \ref{appB}),  $u_n(t) \to u_*(t)$ in $H^s$ and thus $u_n(t) \rightharpoonup u_*(t)$ in $H^1(\mathbb{R})$. By this convergence and \eqref{eq2.2}, we get 
\begin{align*}
	\|u_*(t)\|_{H^1} \lesssim \liminf_{n \to \infty} \|u_n(t) - \mathcal{R}(t)\|_{H^1} + \|\mathcal{R}(t)\|_{H^1} \lesssim 1
\end{align*} 
for $t \in [T_0,T_*)$. This means that $u_*$ is global and \eqref{eq2.2} gives the desired estimate. 
\end{proof}


\subsection{Modulation}
\label{sec3.2}


\subsubsection{Time independent modulation}

Given $y_1^0,..., y_{k_0-1}^0,y_{k_0+1}^0,..., y_K^0\in\R$ and $\mu_1^0,...,\mu_K^0\in\R$, 
let 
$$
\boR^0 (x) := \sum_{k=1}^K Q_{k}(x -y_k^0) e^{i\left(
\frac 12 v_kx + \mu_k^0
\right)}
$$
with $y_{k_0}^0 =0$. 
For $\varepsilon>0$, we set 
\begin{align*}
	\mathcal{B}_{\varepsilon}:=\{ f \in H^{1}: \|f-\boR^0\|_{H^{1}} \leq \varepsilon\}.
\end{align*}

\begin{lemma}[Modulation for function independent of time]\label{Lemma:ModInd}
There exist $\varepsilon_0,L_{1},C_{1}>0$ such that the following holds; if  $\min\{|y_{k}^0-y_{j}^0|: k\neq j\}\geq L_{1}$, $u \in \mathcal{B}_{\varepsilon_0}$, 
then there exists a unique $C^1$ function $(y_{1}(u),...,y_{k_{0}-1}(u),y_{k_{0}+1}(u),...,y_{K}(u);\mu_{1}(u),...,\mu_{K}(u)): \mathcal{B}_{\varepsilon_0} \to \mathbb{R}^{K-1} \times \mathbb{R}^{K}$ 
such that 
\begin{align*}
	&\Re \int_{\mathbb{R}} (\partial_{x}Q_{k})(x-y_k^0 -y_{k})e^{i\left(\frac{1}{2}v_{k}x+\mu_k^0 +\mu_{k}\right)}  \overline{w(x)} dx=0\qquad (k\in \llbracket 1,K\rrbracket \setminus\{k_{0}\}),
	\\
	&\Im \int_{\mathbb{R}}  Q_{k}(x-y_k^0 - y_{k})e^{i\left(\frac{1}{2}v_{k}x+ \mu_k^0 +\mu_{k}\right)} \overline{w(x)} dx=0\qquad 
(k\in \llbracket 1,K\rrbracket \setminus\{k_{0}\}),
	\\
	&\Im \int_{\mathbb{R}}  Q_{k_0}(x)e^{i\left(\frac{1}{2}v_{k_0}x+ \mu_{k_0}^0 +\mu_{k_0}\right)} \overline{w(x)} dx=0,
\end{align*}
where
\begin{align*}
	w(x) :=u(x) - \sum_{k\in \llbracket 1, K \rrbracket \setminus\{ k_{0}\}} Q_{k}(x-y_k^0 -y_{k})e^{i\left(\frac{1}{2}v_{k}x+\mu_k^0+\mu_{k}\right)} - Q_{k_0}(x)e^{i\left(\frac{1}{2}v_{k_0}x+\mu_{k_0}^0+\mu_{k_0}\right)},
\end{align*}
with the property 
\begin{align*}
	\|w\|_{H^{1}} +\sum_{k\in \llbracket 1, K \rrbracket \setminus\{ k_{0}\}} |y_k| + \sum_{k\in \llbracket 1, K \rrbracket} |\mu_{k}| \leq C_{1} \nor{u- \boR^0}{H^1}. 
\end{align*}
\end{lemma}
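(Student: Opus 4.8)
The plan is to apply the implicit function theorem to a suitable $C^1$ map whose zero set encodes the orthogonality conditions. Concretely, I would define the parameter vector $\mathbf{q} = (y_1,\dots,y_{k_0-1},y_{k_0+1},\dots,y_K;\mu_1,\dots,\mu_K) \in \R^{K-1}\times\R^K$ (a $(2K-1)$-dimensional space) and set up the functional $\Phi = (\Phi_k^1,\Phi_k^2,\Phi_{k_0}^2) : \mathcal{B}_{\eps_0}\times U \to \R^{2K-1}$, where $U$ is a small neighborhood of $0$ in parameter space, with components
\begin{align*}
\Phi_k^1(u,\mathbf{q}) &:= \Re \int_\R (\rd_x Q_k)(x - y_k^0 - y_k) e^{i(\frac12 v_k x + \mu_k^0 + \mu_k)} \overline{w(x)}\,dx \quad (k\ne k_0),\\
\Phi_k^2(u,\mathbf{q}) &:= \Im \int_\R Q_k(x - y_k^0 - y_k) e^{i(\frac12 v_k x + \mu_k^0 + \mu_k)} \overline{w(x)}\,dx \quad (k\ne k_0),\\
\Phi_{k_0}^2(u,\mathbf{q}) &:= \Im \int_\R Q_{k_0}(x) e^{i(\frac12 v_{k_0} x + \mu_{k_0}^0 + \mu_{k_0})} \overline{w(x)}\,dx,
\end{align*}
where $w = w(u,\mathbf{q})$ is as in the statement. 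The key observation is that $\Phi(\boR^0, 0) = 0$: when $u = \boR^0$ and $\mathbf{q} = 0$ we have $w = 0$, so every integrand vanishes.

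The heart of the argument is to compute the Jacobian $D_{\mathbf{q}}\Phi$ at $(\boR^0, 0)$ and show it is invertible, uniformly in the base points as long as the separation $\min\{|y_k^0 - y_j^0| : k\ne j\} \ge L_1$ is large. Differentiating, the diagonal entries produce quantities like $\rd_{y_k}\Phi_k^2|_0 = -\Re\int (\rd_x Q_k)(x-y_k^0) Q_k(x-y_k^0)\,dx$ plus the contribution from $\rd_{y_k} w$; one checks that the dominant terms are, up to nonzero scalar factors, $\|\rd_x Q_k\|_{L^2}^2$ (for the $y_k$--$\Phi_k^1$ coupling), $\|Q_k\|_{L^2}^2$ or $\rd_\mu$-type terms (for the $\mu_k$--$\Phi_k^2$ coupling), all of which are nonzero. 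The off-diagonal entries — both those coupling different solitons $k\ne j$ and those within one soliton coupling $y_k$ to $\Phi_k^2$ or $\mu_k$ to $\Phi_k^1$ — are $O(e^{-cL_1})$ or vanish by parity/oscillation: the cross terms between distinct solitons are exponentially small in $L_1$ because $Q_k(\cdot - y_k^0)$ and $Q_j(\cdot - y_j^0)$ have exponentially separated supports, while the intra-soliton off-diagonal terms vanish because $\int (\rd_x Q_k) Q_k = 0$ (derivative of $\frac12 Q_k^2$, which decays at infinity) and because the $e^{i\frac12 v_k x}$ phase derivative contributes an odd integrand against an even one. Hence $D_{\mathbf{q}}\Phi(\boR^0,0)$ is a block-diagonal-plus-small-perturbation matrix, invertible with bounded inverse for $L_1$ large.

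With invertibility of $D_{\mathbf{q}}\Phi$ established, the implicit function theorem gives $\eps_0 > 0$ and unique $C^1$ functions $\mathbf{q}(u)$ on $\mathcal{B}_{\eps_0}$ with $\Phi(u,\mathbf{q}(u)) = 0$ and $\mathbf{q}(\boR^0) = 0$; uniformity of $\eps_0$, $C_1$ follows from the uniform bounds on $\|D_{\mathbf{q}}\Phi^{-1}\|$ and on the $C^1$ norm of $\Phi$ on a fixed-size neighborhood (both depending only on $p$, $\gamma$, the $\omega_k$, $v_k$, and $L_1$). The Lipschitz estimate $\|w\|_{H^1} + \sum_{k\ne k_0}|y_k| + \sum_k |\mu_k| \le C_1 \|u - \boR^0\|_{H^1}$ comes from the chain rule: $\|\mathbf{q}(u)\|\lesssim \|\mathbf{q}(u) - \mathbf{q}(\boR^0)\| \lesssim \|u - \boR^0\|_{H^1}$ since $\mathbf{q}$ is $C^1$ with controlled derivative, and then $\|w\|_{H^1} \le \|u - \boR^0\|_{H^1} + \|\boR^0 - (\text{modulated sum})\|_{H^1} \lesssim \|u-\boR^0\|_{H^1} + \|\mathbf{q}(u)\|$. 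I expect the main obstacle to be the bookkeeping in the Jacobian computation — carefully separating the genuinely nonzero diagonal contributions from the exponentially small cross terms and the parity-induced vanishing terms, and verifying the constants can be taken uniform in the admissible base points $y_k^0$ — rather than any conceptual difficulty; this is the standard modulation lemma and the only mild subtlety is that here $y_{k_0}$ is held fixed at $0$ (no translation parameter for the stationary soliton), so the $k_0$-block of $\Phi$ has only the phase component $\Phi_{k_0}^2$.
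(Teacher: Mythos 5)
Your proposal is correct and follows essentially the same route as the paper: set up the map $\Phi$ encoding the orthogonality conditions, verify $\Phi(\boR^0,\bm{0})=\bm{0}$, show its parameter-Jacobian at that point is the diagonal matrix with entries $\|\rd_x Q_k\|_{L^2}^2$, $\|Q_k\|_{L^2}^2$ plus $O(e^{-cL_1})$ corrections (the intra-soliton cross terms vanishing since $\int \rd_x Q_k\, Q_k\,dx=0$ and the integrands are real), and conclude by the implicit function theorem with bounds uniform in the base points, exactly as in the paper's proof.
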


\begin{proof}
For simplicity of notation, we suppose $k_0=K$. For $\boy = (y_k)_{k=1}^{K-1}$ and $\bmu = (\mu_k)_{k=1}^{K}$, let
$$
w(u,\boy,\bmu) := u - \sum_{k=1}^K Q_k (\cdot -y_k^0-y_k) e^{i\left(\frac 12 v_k x +\mu_k^0+\mu_k \right)}
$$
with identification $y_K=0$. Obviously, we have 
$w(\boR^0,\bm0,\bm0 ) =0.
$ 
Define $\Phi:B_{\varepsilon_0}\times \R^{K-1}\times \R^K \to \R^{K-1}\times \R^K$ by
$$
\Phi (u,\boy,\bmu) =
\Phi_{\boy^0,\bmu^0} (u,\boy,\bmu) := \left( (\rho_k^1 (u,\boy,\bmu)_{k=1}^{K-1}, (\rho_k^2 (u,\boy,\bmu)_{k=1}^{K} )
\right)
$$
with $\boy^0 := (\boy^0_k)_{k=1}^{K-1}$, $\bmu^0 := (\mu_k^0)_{k=1}^{K}$ and
$$
\rho_k^1 (u,\boy,\bmu) := 
\Re \int_{\mathbb{R}} (\partial_{x}Q_{k})(x-y_k^0-y_{k})e^{i\left(\frac{1}{2}v_{k}x+\mu_k^0+\mu_{k}\right)}  \overline{w(u,\boy,\bmu)}dx,\qquad k\in \llbracket 1,K-1\rrbracket,
$$
$$
\rho_j^2(u,\boy,\bmu) := 
\Im \int_{\mathbb{R}} Q_j (x -y_j^0 - y_j) e^{i\left(\frac{1}{2}v_{j}x+\mu_j^0+\mu_{j}\right)}  \overline{w(u,\boy,\bmu)}dx, 
\qquad j\in \llbracket 1,K\rrbracket.
$$
First we note $\Phi(\boR^0,\bm{0},\bm{0})=\bm{0}$. By calculation, for $k,k'\in \llbracket 1,K-1\rrbracket$ and $j,j'\in \llbracket 1,K\rrbracket$, we have
$$
\begin{aligned}
\frac{\rd \rho^1_{k}}{\rd y_{k'}} (\boR^0,\bm{0},\bm{0}) 
&=
\Re \int_{\mathbb{R}} (\partial_{x}Q_{k})(x-y_{k}^0)
(\partial_{x}Q_{k'})(x-y_{k'}^0)
e^{i\left(\frac{1}{2} (v_{k} -v_{k'})x+(\mu_{k}^0- \mu_{k'}^0)\right)} dx
\\
&=
\left\{
\begin{aligned}
&\nor{\rd_x Q_k}{L^2}^2 & \text{if } k=k',\\
&O\left(e^{- \min\{ \sqrt{\omega_{k}},\sqrt{\omega_{k'}}\}|y_{k}^0-y_{k'}^0|}\right) & 
\text{if } k\neq k';
\end{aligned}
\right.
\end{aligned}
$$
$$
\begin{aligned}
\frac{\rd \rho^1_{k}}{\rd \mu_{j'}} (\boR^0,\bm{0},\bm{0}) 
&=
\left\{
\begin{aligned}
&0 & \text{if } k=j'\\
&-\Im \int_{\mathbb{R}} \rd_x Q_{k}(x-y_{k}^0) Q_{j'}(x-y_{j'}^0)e^{i\left(\frac{1}{2}(v_{k}-v_{j'})x+(\mu_{k}^0-\mu_{j'}^0)\right)}dx & \text{if } j\neq k'
\end{aligned}
\right.\\
&=
O\left(e^{- \min\{ \sqrt{\omega_{k}},\sqrt{\omega_{j'}}\}|y_{k}^0-y_{j'}^0|}\right);
\end{aligned}
$$
$$
\begin{aligned}
\frac{\rd \rho^2_{j}}{\rd y_{k'}} (\boR^0,\bm{0},\bm{0}) 
&=
\left\{
\begin{aligned}
&0 & \text{if } j=k'\\
&\Im \int_{\mathbb{R}} Q_{j}(x-y_{j}^0)\partial_{x}Q_{k'}(x-y_{k'}^0)e^{i\left(\frac{1}{2}(v_{j}-v_{k'})x+(\mu_{j}^0-\mu_{k'}^0)\right)}dx & \text{if } j\neq k'
\end{aligned}
\right.
\\
&= O\left(e^{- \min\{ \sqrt{\omega_{j}},\sqrt{\omega_{k'}}\}|y_{j}^0-y_{k'}^0|}\right);
\end{aligned}
$$
$$
\begin{aligned}
\frac{\rd \rho^2_{j}}{\rd \mu_{j'}} (\boR^0,\bm{0},\bm{0}) 
&=
\Re \int_{\mathbb{R}} Q_{j}(x-y_{j}^0) 
Q_{j'}(x-y_{j'}^0) 
e^{i\left(\frac{1}{2}(v_{j} -v_{j'})x+(\mu_{j}^0 -\mu_{j'}^0)\right)} dx
\\
&=
\left\{
\begin{aligned}
&\nor{Q_j}{L^2}^2 & \text{if } j=j',\\
&O\left(e^{- \min\{ \sqrt{\omega_{j}},\sqrt{\omega_{j'}}\}|y_{j}^0-y_{j'}^0|}\right) & 
\text{if } j\neq j'.
\end{aligned}
\right.
\end{aligned}
$$
Hence, if $\min\{|y_{k}^0-y_{j}^0|: k\neq j\}\geq L_{1}$, then
$$
\nab_{\boy,\bmu} \Phi(\boR^0,\bm{0},\bm{0}) = D + O(e^{-C L_1})
$$
for some $C>0$, where
$$
D= 
\begin{pmatrix}
\diag \left( \nor{\rd_x Q_k}{L^2}^2 \right)_{k\in \llbracket 1,K-1\rrbracket} & 0\\
0 & \diag \left( \nor{Q_j}{L^2}^2 \right)_{j\in \llbracket 1,K\rrbracket}
\end{pmatrix}
.
$$
In particular, $\nab_{\boy,\bmu} \Phi
(\boR^0,\bm{0},\bm{0})$ is invertible if $L_1$ is sufficiently large, and
\begin{equation}\label{mod1}
\nor{(\nab_{\boy,\bmu} \Phi
 (\boR^0,\bm{0},\bm{0}))^{-1}}{} \le C\quad 
\text{with } C \text{ independent of } \boy^0,\bmu^0.
\end{equation}
Thus, by the implicit function theorem, there exist 
$\varepsilon_0>0$ and a unique $C^{1}$-class function $(\boy (u), \bmu (u)): \mathcal{B}_{\varepsilon_0} \to \mathbb{R}^{K-1} \times \mathbb{R}^{K}$ such that 
$
	(\boy(\boR^0);\bmu(\boR^0))=(\bm{0},\bm{0})
$ 
and $\Phi(u,\boy(u),\bmu(u)) = \bm{0}$ 
for all $u \in \mathcal{B}_{\varepsilon_0}$. 
Then $\frac{d\boy}{du}$, $\frac{d\bmu}{du}$ is bounded in $\mathcal{B}_{\varepsilon_0}$, with uniform bound in $\boy^0,\bmu^0$, since we have \eqref{mod1} and since 
$
\nor{\frac{\rd \Phi}{\rd u} (u,\boy,\bmu)}{H^{-1}} 
$ 
is uniformly bounded in $H^1\times \R^{K-1}\times \R^K$. 
Hence, for $\varepsilon>0$ we have
$$
|(\boy(u), \bmu(u)) - (\boy(\boR^0), \bmu(\boR^0))| \le C\varepsilon
$$
for $u\in \mathcal{B}_\varepsilon$. 
The rest of the proof is straightforward.
\end{proof}

\subsubsection{Time dependent modulation}

We will apply the modulation to the solution of \eqref{eq}. By the well-posedness, the solution $u_{n}$ to \eqref{eq} exists for $t$ close to $T_{n}$. 
Moreover, if $\bm{\lambda_{n}}:=(\bm{\alpha_{n}},\bm{\beta_{n}})$ is sufficiently small, then the final data $u_n(T_n)$ satisfies
\begin{align*}
	\|u_n(T_n) - \mathcal{R}(T_n)\|_{H^1} 
	&\leq 
	\|u_n(T_n) - \mathcal{R}(T_n) 
	+ i \sum_{k,\pm}\alpha_{k,n}^\pm \mathcal{Y}_k^\pm (T_n)
	+ i \sum_{\pm}\beta_{n}^\pm \mathcal{Z}_k^\pm (T_n)\|_{H^1} 
	\\
	&\quad +C \sum_{k,\pm}|\alpha_{k,n}^\pm| 
	+ C \sum_{\pm} |\beta_{n}^\pm|
	\leq \frac{\varepsilon_0}{2}.
\end{align*}
and thus Lemma \ref{Lemma:ModInd} can be applied to $u_n(t)$ around $\boR (t)$ for $t$ close to $T_n$.

In what follows we omit the subscript $n$ if there is no risk of confusion. 
For $t$ such that $\nor{u(t) - \boR (t)}{H^1} < \varepsilon_0$, 
Lemma \ref{Lemma:ModInd} can be applied with $y_k^0 = v_k t + x_k$ and $\mu_k^0 = 
-\frac 14 |v_k|^2 t + \om_k t + \te_k$, which defines parameter functions: 
\begin{align*}
	\bm{y}(t)&:=(y_{1}(t),...,y_{k_{0}-1}(t), y_{k_{0}+1}(t), ...y_{K}(t)) \in \mathbb{R}^{K-1},
	\\
	\bm{\mu}(t)&:=(\mu_{1}(t),...,\mu_{K}(t)) \in \mathbb{R}^{K}.
\end{align*}
We set 
\begin{align*}	
	\widetilde{X}_{k} (t,x)&:= x-v_{k}t -x_{k}-y_{k}(t),
	\\
	\widetilde{\Theta}_{k}(t,x)&:= \frac{1}{2}v_{k} x - \frac{1}{4}|v_{k}|^{2}t +\omega_{k}t + \theta_{k} + \mu_{k}(t), 
\end{align*}
and
\begin{align*}
	\widetilde{\mathcal{R}}_{k}(t,x)&:=
	Q_{k}(\widetilde{X}_{k}(t,x))e^{i\widetilde{\Theta}_{k}(t,x)},
	\\
	\widetilde{\mathcal{R}}(t,x) &:= \sum_{k=1}^{K}\widetilde{\mathcal{R}}_{k}(t,x),
	\\
	\widetilde{\mathcal{Y}}_{k}^{\pm}(t,x) &:= 
	Y_{k}^{\pm}(\widetilde{X}_{k}(t,x))e^{i\widetilde{\Theta}_{k}(t,x)},
	\\
	\widetilde{\mathcal{Z}}_{k_{0}}^{\pm}(t,x) &:= \mathcal{Z}_{k_{0}}^{\pm}(t,x)e^{i\mu_{k_{0}}(t)}
	=Z_{k_{0}}^{\pm}(\widetilde{X}_{k_0}(t,x))e^{i\widetilde{\Theta}_{k_0}(t,x)}. 
\end{align*}
Let $w(t):=u(t) - \widetilde{\mathcal{R}}(t)$. Then $w$ satisfies
\begin{align*}
	\Im \int_{\mathbb{R}} \overline{w(t,x)} \widetilde{\mathcal{R}}_{k}(t,x)dx =0
\end{align*}
for any $k \in \llbracket 1, K \rrbracket$ and
\begin{align*}
	\Re \int_{\mathbb{R}} \overline{w(t,x)} D_x \widetilde{\mathcal{R}}_k(t,x)dx 
	= 0
\end{align*}
for any $k \in \llbracket 1, K \rrbracket \setminus \{k_{0}\}$, where we let 
$D_x \widetilde{\mathcal{R}}_k(t,x):=(\partial_{x}Q_{k})(\widetilde{X}_{k}(t,x))e^{i\widetilde{\Theta}_{k}(t,x)}$ (which is different from $\partial_x \widetilde{\mathcal{R}}_k$). 

We define $\bm{a}^{\pm}(t)=(a_{k}^{\pm}(t))_{k=1}^{K}$ by
\begin{align*}
	a_{k}^{\pm}(t):= \Im \int_{\mathbb{R}} \widetilde{\mathcal{Y}}_{k}^{\mp}(t,x)\overline{w(t,x)}dx
\end{align*}
and $b^{\pm}(t)$ by
\begin{align*}
	b^{\pm}(t):= \Im \int_{\mathbb{R}} \widetilde{\mathcal{Z}}_{k_{0}}^{\mp}(t,x)\overline{w(t,x)}dx.
\end{align*}
Let 
\begin{align*}
	\sqrt{c_0}:= \frac{1}{10}\min\{ \sqrt{\omega_{1}}, \cdots, \sqrt{\omega_{K}}, v_{K}-v_{K-1}, \cdots , v_{2}-v_{1},\sqrt{\mathfrak{y}_1},...,\sqrt{\mathfrak{y}_K},\sqrt{ \mathfrak{z}_{k_0}} \},
\end{align*}
where $1/10$ is not essential.

\subsubsection{Modulated Final Data}

For a normed vector space $X$, we denote the closed ball with center $a$ and radius $r$ by $B_X(a,r)$. If the center is the origin of $X$, then we denote it by $B_X(r)$. We also denote the boundary of $B_X(r)$ by $\mathbb{S}_X(r)$.  

We will use the following modulated final data result to rewrite the problem. 

\begin{proposition}[modulated final data]
\label{P3.3}
If $T_0$ is sufficiently large, the following holds: 
There exists $C>0$, independent of $n$, such that for any $n\in \N$ and for all 
$\bm{\mathfrak{l}}^{+} = (\bm{\mathfrak{a}}^+, \mathfrak{b}^+)\in B_{\R^{K+1}}(e^{-\frac{3}{2} c_0 T_n})$, there exists 
a unique $\bm{\lambda}_n =  (\bm{\alpha}_n, \bm{\beta}_n)$ with $|\bm{\lambda}_n| \le C |\bm{\mathfrak{l}}^{+}|$, which depends continuously on $\bm{\mathfrak{l}}^{+}$, such that 
the modulation $(w(T_n) , \boy(T_n), \bmu(T_n))$ of $u(T_n)$ satisfies
\begin{equation}\label{3.2}
\bm{l}^+(T_n):=(\boa^+ (T_n) , b^+ (T_n)) = \bm{\mathfrak{l}}^{+},\qquad 
\bm{l}^-(T_n):=(\boa^- (T_n) , b^- (T_n)) = \bm{0}.
\end{equation}
\end{proposition}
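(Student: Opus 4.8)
The statement asks us to solve, for each prescribed small ``unstable'' target $\bm{\mathfrak{l}}^+ = (\bm{\mathfrak{a}}^+, \mathfrak{b}^+)$, for the final-data parameters $\bm{\lambda}_n = (\bm{\alpha}_n, \bm{\beta}_n)$ so that the modulated perturbation $w(T_n)$ has its $+$ unstable components equal to $\bm{\mathfrak{l}}^+$ and its $-$ unstable components vanishing. The natural approach is a fixed-point / implicit-function-theorem argument in the $2K+2$ real parameters $\bm{\lambda}_n$, viewing the map
\[
\bm{\lambda}_n \longmapsto \bigl( \bm{l}^+(T_n), \bm{l}^-(T_n) \bigr) \in \R^{K+1}\times\R^{K+1}
\]
as a small perturbation of a linear isomorphism. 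First I would write $u(T_n) = \mathcal{R}(T_n) + i\sum_{k,\pm}\alpha_{k,n}^\pm \mathcal{Y}_k^\pm(T_n) + i\sum_\pm \beta_n^\pm \mathcal{Z}_{k_0}^\pm(T_n)$, apply Lemma \ref{Lemma:ModInd} to extract $(\boy(T_n),\bmu(T_n))$ and $w(T_n)$, and then compute $a_k^\pm(T_n)$, $b^\pm(T_n)$ via their defining pairings with $\widetilde{\mathcal{Y}}_k^\mp$, $\widetilde{\mathcal{Z}}_{k_0}^\mp$.

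The key computation is the linearization at $\bm{\lambda}_n = \bm{0}$. At $\bm{\lambda}_n=\bm 0$ we have $u(T_n)=\mathcal{R}(T_n)$, and the modulation parameters are $\boy=\bm 0$, $\bmu = \bm 0$ and $w(T_n)=0$ (here one uses that $\mathcal{R}(T_n)$ is already in the correct modulated form). The derivative of $w(T_n)$ with respect to $\alpha_{j,n}^\pm$ is $i\widetilde{\mathcal{Y}}_j^\pm(T_n)$ up to corrections coming from the $\bm{\lambda}_n$-dependence of the modulation parameters, and similarly $\partial w / \partial \beta^\pm = i\widetilde{\mathcal{Z}}_{k_0}^\pm$ plus modulation corrections; but those corrections are orthogonal (in the relevant real pairings) to the directions $\widetilde{\mathcal{Y}}_k^\mp$, $\widetilde{\mathcal{Z}}_{k_0}^\mp$ by the orthogonality conditions built into Lemma \ref{Lemma:ModInd}, or are exponentially small in $T_n$. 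Hence the Jacobian of $\bm{\lambda}_n \mapsto (\bm{l}^+(T_n), \bm{l}^-(T_n))$ at the origin is, up to $O(e^{-cT_n})$ errors, the block-diagonal matrix built from the numbers
\[
\Im \int i Y_k^\pm \overline{\, i Y_j^\mp\,}\,dx, \qquad \Im \int i Z_{k_0}^\pm \overline{\, iZ_{k_0}^\mp\,}\,dx,
\]
together with the cross terms between solitons centered at well-separated points, which are $O(e^{-cT_n})$. So the main sub-task is to check that the ``diagonal'' pairing $\Im\langle i Y_k^+, \overline{i Y_k^-}\rangle$ (and likewise for $Z$) is nonzero; this is exactly the nondegeneracy that was already essentially verified in Step 2 of the proof of Proposition \ref{xP2.4}, where $B_{\om,\gam}(Y^+, Y^-) = -2\mathfrak{y}_{\om,\gam}^2 \inp{Y_1}{(L^-_{\om,\gam})^{-1}Y_1}_{L^2}\neq 0$; the symplectic pairing $\Im\langle Y^+, \overline{Y^-}\rangle$ is related to this by a similar manipulation and is nonzero because $\mathfrak{y}_{\om,\gam}\neq 0$ together with $L^-$-positivity on $Q^\perp$. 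Off-diagonal pairings between distinct $k$ vanish because of the different group velocities making the solitons live far apart (exponential decay of $Q_k$, $Y_k$, $Z_k$) or, when centered at the same place, because even/odd parity forces $\langle Y, Z\rangle = 0$.

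Once the Jacobian is shown to be invertible with inverse bounded uniformly in $n$ (for $T_0$ large, absorbing the $O(e^{-c T_n})$ corrections), the quantitative implicit function theorem — or equivalently a contraction mapping on the closed ball $B_{\R^{2K+2}}(C|\bm{\mathfrak{l}}^+|)$ — produces a unique $\bm{\lambda}_n$ solving \eqref{3.2} with $|\bm{\lambda}_n|\le C|\bm{\mathfrak{l}}^+|$, and the continuous dependence on $\bm{\mathfrak{l}}^+$ is automatic from the implicit function theorem. The choice of domain radius $e^{-\frac32 c_0 T_n}$ for $\bm{\mathfrak{l}}^+$ guarantees $|\bm{\lambda}_n|$ is small enough that $u(T_n)\in \mathcal{B}_{\varepsilon_0/2}$, so that Lemma \ref{Lemma:ModInd} applies throughout; this is where one must be slightly careful to match constants. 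The main obstacle, in my view, is bookkeeping: tracking how the modulation parameters $(\boy,\bmu)$ depend on $\bm{\lambda}_n$ and verifying that their contribution to the derivatives of $\bm{l}^\pm$ is either killed by the orthogonality conditions or is exponentially small, so that the leading-order Jacobian is genuinely the clean block-diagonal matrix above. No single step is deep, but the argument has many moving pieces (two families of unstable modes $Y, Z$, both signs $\pm$, $K$ solitons, and the coupled modulation parameters) that must be organized carefully.
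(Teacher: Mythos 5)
Your proposal is correct and follows essentially the same route as the paper: the paper formalizes your fixed-point/implicit-function argument as the composition $\Upsilon=\Xi\circ\Psi\circ\Phi$, shows the modulation corrections contribute only $O(\nor{\bla}{}+e^{-\si_0 T_n})$ to $d\Upsilon$, and identifies the leading Jacobian as a block-diagonal matrix with exponentially small off-diagonal interactions, exactly as you describe. The only cosmetic difference is the nondegeneracy bookkeeping: the actual $2\times2$ blocks are the Gram matrices of real $L^2$ inner products of $Y_k^\pm$ (resp.\ $Z_{k_0}^\pm$), with determinant $4\nor{Y_1}{L^2}^2\nor{Y_2}{L^2}^2>0$, rather than the symplectic pairing $\Im\int Y^+\overline{Y^-}\,dx$ you invoke; both reduce to the same fact (that the real and imaginary parts of $Y^+$ are nontrivial, equivalently $Y^+,Y^-$ are $\R$-independent), so your argument goes through.
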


\begin{proof}
In the proof, we often drop the time variable $t=T_n$ of a space-time function for simplicity. We do not worry about the order of vectors if it is not crucial. 
Also, we may suppose $k_0=K$ without loss of generality. 
We divide the proof into several steps:\medskip\par
\textbf{Step 1}. We first clarify the structure of one-to-one correspondence which we focus on. Consider the maps:
$$
\Phi : \R^{2K}\times \R^{2} \to H^1,\qquad
\Phi(\bal, \bbe) := 
i \sum_{k\in 
\llbracket 1,K \rrbracket ,\pm} \al^\pm_{k,n} \boY_k^\pm (T_n) 
+i \sum_{\pm} \be_n^\pm \boZ_{k_0}^\pm (T_n),
$$
$$
\Psi : \boV \to H^1 \times \R^{K-1} \times \R^K,\qquad
\Psi(f) := (f+ \boR(T_n) - \widetilde{\boR} (T_n) , \psi_1(f) , \psi_2(f)),
$$
$$
\begin{aligned}
&\Xi : H^1\times \R^{K-1} \times \R^{K} \to \R^{2K-2} \times \R^2 \times \R^2,\qquad \\
&\Xi (w,\boy, \bmu) := 
\left(
\left(
\Im \int_\R \ovl{w} \widetilde{\boY}^\pm_{k}
\right)_{k\in \llbracket 1,K-1 \rrbracket,\pm}
,
\left(
\Im \int_\R \ovl{w} \widetilde{\boY}^\pm_{K}
\right)_{\pm}
,
\left(
\Im \int_\R \ovl{w} \widetilde{\boZ}^\pm_{K}
\right)_{\pm}
\right).
\end{aligned}
$$
Here, $\boV$ is a small ball centered at $0$ in $H^1(\mathbb{R})$. The functions $\psi_1$ and $\psi_2$ are defined as follows; applying Lemma \ref{Lemma:ModInd} with $\mathcal{R}^0=\mathcal{R}(T_n)$, then $\psi_1(f):= \bm{y}(f+\mathcal{R}(T_n))$ and $\psi_2(f):= \bm{\mu}(f+\mathcal{R}(T_n))$.
Define $\Upsilon := \Xi \circ \Psi \circ \Phi$. 
Then \eqref{3.2} can be written as
$$
\Upsilon (\bla_n) = (\bm{\mathfrak{l}}^{+},\bm{0}^-).
$$
Thus in order to show Proposition \ref{P3.3}, it suffices to show that $\Upsilon$ is invertible in a neighborhood of $\bm{0}$ with uniform bound in $n$.\medskip\par
\textbf{Step 2}. First note that $\Phi$ is linear, thus $d\Phi = \Phi$. 
Next, the derivative of $\Xi$ is
$$
\begin{aligned}
&d\Xi (w,\boy,\bmu). (h,\boz,\bnu)\\
&= 
\frac{d}{d\eps} 
\left(
\left(
\Im \int_\R
\ovl{(w+\eps h)(x)} \boY_{k}^\pm (x - y_{k} -\eps z_{k}) 
e^{i(\frac 12 v_{k} x + \mu_{k} + \eps \nu_{k})}
dx
\right)_{k,\pm} , \right.
\\
&\hspace{100pt}
\left(
\Im \int_\R
\ovl{(w+\eps h)(x)} \boY_{k_0}^\pm (x) e^{i(\frac 12 v_{k} x + \mu_{k_0} + \eps \nu_{k_0})}
dx
\right)_{\pm},
\\
&\hspace{100pt}
\left.
\left. 
\left(
\Im \int_\R
\ovl{(w+\eps h)(x)} \boZ_{k_0}^\pm (x) e^{i(\frac 12 v_{k} x + \mu_{k_0} + \eps \nu_{k_0})}
dx
\right)_{\pm}
\right)
\right|_{\eps=0}\\
&=
\left(
\left(
\Im \int_\R
\ovl{h} \widetilde{\boY}_k^\pm  dx
-
\Im \int_\R
z_\ka \ovl{w} 
D_x \boY_k^\pm dx
+
\Im \int_\R
i\nu_k 
\ovl{w} \widetilde{\boY}_k^\pm dx
\right)_{k,\pm} , \right.
\\
&\hspace{100pt}
\left(
\Im \int_\R
\ovl{h} \widetilde{\boY}_{k_0}^\pm  
dx
+
\Im \int_\R
i\nu_{k_0} 
\ovl{w} \widetilde{\boY}_{k_0}^\pm dx
\right)_{\pm},\\
&\hspace{100pt}
\left. 
\left(
\Im \int_\R
\ovl{h} \widetilde{\boZ}_{k_0}^\pm 
dx
+
\Im \int_\R
i\nu_{k_0} 
\ovl{w} \widetilde{\boZ}_{k_0}^\pm  dx
\right)_{\pm}
\right)\\
\end{aligned}
$$
\medskip\par
\textbf{Step 3}. Let us calculate the derivative of $\Psi$. 
The explicit definition of $\Psi$ is
$$
\begin{aligned}
\Psi(f) 
&= 
\left(f+ \boR -
\sum_{k=1}^{K-1}
Q_{k} (\cdot - 
\psi_{1k}(f)) e^{i(\frac 12 v_{k} x + \psi_{2k}(f))}
- 
Q_{k_0} e^{i(\frac 12 v_{k_0} x + \psi_{2k_0}(f))}
,\psi_1(f), \psi_2(f)
\right).
\end{aligned}
$$
Hence we have
$$
\begin{aligned}
&d \Psi (f).h
=
\left.\frac{d}{d\eps} \Psi(w+\eps h)\right|_{\eps =0}\\
&=
\left(
h+ \sum_{k=1}^{K-1} D_x \widetilde{\mathcal{R}}_k 
(d\psi_{1k}(f).h) 
-
\sum_{k=1}^{K} \widetilde{\mathcal{R}}_k 
i(d\psi_2(f).h)
,\quad 
d\psi_1(f).h,\quad d\psi_2(f).h
\right)\\
&=
\left(
h+ \sum_{k=1}^{K-1} 
D_x \widetilde{\mathcal{R}}_k
\Re \int_\R 
\frac{D_x \widetilde{\mathcal{R}}_k \ovl{h}}{\nor{\rd_x Q_{k}}{L^2}^2} 
\, dx 
-
\sum_{k=1}^{K} i \widetilde{\mathcal{R}}_k 
\Im \int_\R 
\frac{ \widetilde{\mathcal{R}}_k \ovl{h}}{\nor{Q_k}{L^2}^2} 
\, dx,
\right.\\
&\hspace{180pt}
\left.
\Re \int_\R 
\frac{D_x \widetilde{\mathcal{R}}_k \ovl{h}}{\nor{\rd_x Q_{k}}{L^2}^2} 
\, dx 
,
\quad 
\Im \int_\R 
\frac{ \widetilde{\mathcal{R}}_k \ovl{h}}{\nor{Q_k}{L^2}^2} 
\, dx 
\right)\\
&\hspace{50pt} +
O(\nor{h}{H^1} (\nor{f}{H^1} + e^{-\si_0 T_n} )).
\end{aligned}
$$

\textbf{Step 4}. 
Let $\bla, \brho\in \R^{2K}\times \R^2$, and assume $\nor{\bla}{} \ll 1$ so that $\Upsilon(\bla)$ is well defined. 
Then we have
$$
d\Upsilon(\bla). \brho =  
d\Xi(\Psi\circ \Phi(\bla)) \circ d\Psi (\Phi(\bla)). \Phi(\brho)
$$
%
Let us first calculate $d \Psi (\Phi(\bla)). \Phi(\brho)$. 
We first claim that
\begin{equation}\label{3.3}
\sum_{k=1}^{K-1}\left|\Re \int_\R D_x \widetilde{\mathcal{R}}_k \ovl{\Phi(\brho)} dx \right|
+
\sum_{k=1}^{K}\left|\Im \int_\R \widetilde{\mathcal{R}}_k \ovl{\Phi(\brho)} dx \right|
= O(\nor{\brho}{} (\nor{f}{H^1} + e^{-\si_0 T_n}) ) .
\end{equation}
Indeed, if we write $\boy = \psi_1(f)$, $\bmu = \psi_2(f)$, $\brho=((\al_j^\pm)_{j,\pm}, (\be^\pm)_\pm)$, then for $k\in \llbracket 1,K-1 \rrbracket$ we have
$$
\begin{aligned}
&\Re \int_\R D_x \widetilde{\mathcal{R}}_k \ovl{\Phi(\brho)} dx\\
&=
\Re \int_\R 
D_x \mathcal{R}_k
\ovl{\Phi(\brho)} dx
+ O(\nor{\Phi(\brho)}{L^2} (\nor{\boy}{} + \nor{\bmu}{}) ) \\
&=
\Re \int_\R 
D_x \mathcal{R}_k
\ovl{
i \sum_{j\in 
\llbracket 1,K \rrbracket ,\pm} \al^\pm_{j} \boY_j^\pm 
+i \sum_{\pm} \be^\pm \boZ_{K}^\pm 
}dx 
+ O(\nor{\brho}{} \nor{f}{H^1} ) 
\\
&=
O(\nor{\brho}{} (\nor{f}{H^1} + e^{-\si_0 T_n}) ).
\end{aligned}
$$
Similarly, for $k\in \llbracket 1,K\rrbracket$,
$$
\begin{aligned}
&\Im \int_\R \widetilde{\mathcal{R}}_k \ovl{\Phi(\brho)} dx\\
&=
\Im \int_\R 
\mathcal{R}_k
\ovl{\Phi(\brho)} dx
+ O(\nor{\Phi(\brho)}{L^2} (\nor{\boy}{} + \nor{\bmu}{}) ) \\
&=
\Im \int_\R 
 \mathcal{R}_k
 \ovl{
i \sum_{j\in 
\llbracket 1,K \rrbracket ,\pm} \al^\pm_{j} \boY_j^\pm 
+i \sum_{\pm} \be^\pm \boZ_{K}^\pm 
}dx 
+ O(\nor{\brho}{} \nor{f}{H^1} ) 
\\
&=
O(\nor{\brho}{} (\nor{f}{H^1} + e^{-\si_0 T_n}) ).
\end{aligned}
$$
Hence \eqref{3.3} holds.

By \eqref{3.3}, we have
$$
\begin{aligned}
d \Psi (\Phi(\bla)). \Phi(\brho) 
=
(\Phi(\rho), \bm{0},\bm{0}) + O(\nor{\brho}{} (\nor{f}{H^1} + e^{-\si_0 T_n})).
\end{aligned}
$$

Thus, if we write $(w,\boy,\bmu) = \Psi\circ \Phi (\bla)$, we obtain
$$
\begin{aligned}
d\Upsilon (\bla). \brho 
&=
\left(
\left(
\Im \int_\R
\ovl{\Phi(\brho)} \widetilde{\boY}_k^\pm  dx
\right)_{k,\pm} , 
\left(
\Im \int_\R
\ovl{\Phi(\brho)} \widetilde{\boY}_{K}^\pm  
dx
\right)_{\pm},
\left(
\Im \int_\R
\ovl{\Phi(\brho)} \widetilde{\boZ}_{K}^\pm 
dx
\right)_{\pm}
\right)\\
&\hspace{150pt} +O(\nor{\brho}{} ( \nor{w}{H^1} +\nor{\boy}{} + \nor{\bmu}{} + e^{-\si_0 T_n}))\\
&=
\left(
\left(
\Im \int_\R
\ovl{\Phi(\brho)} \boY_k^\pm  dx
\right)_{k,\pm} , 
\left(
\Im \int_\R
\ovl{\Phi(\brho)} \boY_{K}^\pm  
dx
\right)_{\pm},
\left(
\Im \int_\R
\ovl{\Phi(\brho)} \boZ_{K}^\pm 
dx
\right)_{\pm}
\right)\\
&\hspace{150pt} +O(\nor{\brho}{} ( \nor{w}{H^1} +\nor{\boy}{} + \nor{\bmu}{} + e^{-\si_0 T_n}))\\
&= 
\left(
\left(
-\Re \int_\R
\ovl{\al^+_k \boY^+_k + \al^-_k \boY^-_k } \boY_k^\pm  dx
\right)_{k,\pm} , 
\left(
-\Re \int_\R
\ovl{\al^+_K \boY^+_K + \al^-_K \boY^-_K} \boY_{K}^\pm  
dx
\right)_{\pm},\right.\\
&\hspace{60pt}\left.
\left(
-\Re \int_\R
\ovl{\be^+ \boY^+_K + \be^- \boY^-_K} \boZ_{K}^\pm 
dx
\right)_{\pm}
\right)
\brho
+O(\nor{\brho}{} ( \nor{\bla}{} + e^{-\si_0 T_n}))
\end{aligned}
$$
where we used 
$$
\sum_{j\neq k, \pm_1,\pm_2} \left|\int_\R  \boY_k^{\pm_1} \boY_j^{\pm_2} dx \right| 
+  \sum_{\substack{k\in \llbracket 1,K-1\rrbracket\\ \pm_1,\pm_2}}
 \left|\int_\R  \boY_k^{\pm_1} \boZ_K^{\pm_2} dx \right|
= O(e^{-\si_0 T_n}), 
$$
$$
\sum_{\pm_1, \pm_2} \Re \int_\R \ovl{\boY_K^{\pm_1}} \boZ_K^{\pm_2} dx =
 0.
$$
Therefore, we have
\begin{equation}\label{3.4}
d\Upsilon (\bla) = 
P + O(\nor{\bla}{} + e^{-\si_0 T_n})
\end{equation}
where
$$
P := 
\begin{pmatrix}
\mathrm{Gramm} (Y^\pm_1)_\pm & 0 &\cdots & 0 & 0\\
0 & \mathrm{Gramm} (Y^\pm_2)_\pm & \cdots & 0 & 0\\
\vdots &\vdots & \ddots & \vdots & \vdots\\
0 & 0 & \cdots & \mathrm{Gramm} (Y^\pm_K)_\pm & 0\\
0 & 0 & \cdots & 0 & \mathrm{Gramm} (Z^\pm_K)_\pm
\end{pmatrix}
,
$$
$$
\mathrm{Gramm} (A^\pm)_\pm := 
\begin{pmatrix}
\inp{A^+}{A^+}_{L^2} & \inp{A^+}{A^-}_{L^2} \\
\inp{A^-}{A^+}_{L^2} & \inp{A^-}{A^-}_{L^2}  
\end{pmatrix}
\quad 
\text{for } A^+, A^-\in L^2(\R).
$$
Since $Y^\pm_k = Y_1 \pm iY_2 $, we have
\begin{equation}\label{3.5}
\det \mathrm{Gramm} (Y^\pm_k)_\pm = 4 \nor{Y_1}{L^2}^2 \nor{Y_2}{L^2}^2 >0.
\end{equation}
The same is true for $(Z^\pm_k)_\pm$. Thus $d\Upsilon$ is invertible if $|\bla| \ll 1$ and $T_0\gg 1$.\medskip\\
\textbf{Step 5}. Let us conclude the proposition. 
By \eqref{3.4} and \eqref{3.5}, 
there exists $\del_0, \del_1>0$, independent of $n$, such that 
$$
|\det d\Upsilon (\la) | \ge \del_0 \qquad \text{for } |\bla| \in \del_1.
$$
In particular, $\Upsilon$ is invertible on $B_{\R^{2K+2}} (\del_1)$. Moreover, there is a constant $C$ independent of $n$ such that
$$
\nor{\Upsilon^{-1} (\bm{\mathfrak{l}})}{} \le C \nor{\bm{\mathfrak{l}}}{} \qquad \text{for } \bm{\mathfrak{l}} \in \Upsilon(B_{\R^{2K+2}} (\del_1)).
$$
In particular, if $\bm{\mathfrak{l}}^{+}\in B_{\R^{K+1}}(e^{-\frac 32 c_0  T_n})$, and if $T_0$ is sufficiently large such that $B_{\R^{2K+2}} (e^{-\frac 32 c_0 T_n}) \subset \Upsilon(B_{\R^{2K+2}} (\del_1))$, then $\bla := \Upsilon^{-1}(\bm{\mathfrak{l}}^{+} ,0)$ is a unique element satisfying \eqref{3.2} with $\nor{\bla}{} \le C \nor{\bm{\mathfrak{l}}^{+}}{}$. 
Hence the proof is complete.
\end{proof}

\subsection{Proof of uniform backward estimate}
\label{sec3.3}

\subsubsection{Rewriting the problem}

Now our problem is to find the final data at $T_n$, that is, $\mathfrak{a}_{k,n}^\pm$ and $\mathfrak{b}_n^\pm$ satisfying the uniform backward estimate. By the modulated final data, i.e., Proposition \ref{P3.3}, we can rewrite the problem into finding an appropriate $\bm{\mathfrak{l}}^{+}$. To show it, we give the following definition. 

\begin{definition}
Let $n$ be fixed. 
Let $T_{0}$ be determined later (independently of $n$). 
For $\bm{\mathfrak{l}}^{+} \in \mathbb{R}^{K+1}$, we define $T(\bm{\mathfrak{l}}^{+})$ as the infimum of $T \in [T_{0},T_{n}]$ such that  the following properties hold for all $t\in [T,T_{n}]$: 
\begin{enumerate}[(1)]
\item Closeness to $ \mathcal{R}(t)$: 
\begin{align*}
	\|u(t) - \mathcal{R}(t)\|_{H^{1}} \leq \varepsilon_{0}.
\end{align*}

\item Estimates of the modulation parameters: 
\begin{align*}
	&e^{c_{0}t}w(t) \in B_{H^{1}}(1), 
	\\
	&e^{c_{0}t}\bm{y}(t) \in B_{\mathbb{R}^{K-1}}(1), \quad  
	e^{c_{0}t}\bm{\mu}(t) \in B_{\mathbb{R}^{K}}(1),
	\\
	&e^{\frac{3}{2}c_{0}t}\bm{l}^{+}(t) \in B_{\mathbb{R}^{K+1}}(1),  \quad
	e^{\frac{3}{2}c_{0}t}\bm{l}^{-}(t) \in B_{\mathbb{R}^{K+1}}(1).
\end{align*}
\end{enumerate}
\end{definition}

Now we check that the conditions are satisfied when $t=T_n$ if $|\mathfrak{l}^+| \leq e^{-\frac{3}{2}c_0T_n}$ and $T_0 \gg 1$. In fact, we show better estimates except for $\bm{l}^{+}$. 
Let $T_0$ be sufficiently large (independent of $n$). 
As seen before, by taking $\bm{\mathfrak{l}}^{+}$ such that $|\bm{\mathfrak{l}}^{+}|\leq e^{-\frac{3}{2}c_0T_n}(\leq e^{-\frac{3}{2}c_0T_0})$, we have 
\begin{align*}
	\|u_n(T_n) - \mathcal{R}(T_n)\|_{H^1} 
	\leq \sum_{k,\pm}|\alpha_{k,n}^\pm| + \sum_{\pm}| \beta_{n}^\pm|
	\leq C |\bm{\mathfrak{l}}^{+}| 
	\leq C e^{-\frac{3}{2}c_0T_0}\leq \frac{\varepsilon_0}{2}.
\end{align*}
By modulation, we also have
\begin{align*}
	\|w(T_n)\|_{H^1} + |\bm{y}(T_n)| + |\bm{\mu}(T_n)| 
	\leq C |\bm{\mathfrak{l}}^{+}|
	\leq \frac{1}{2}e^{-c_0T_n}.
\end{align*}
Moreover, by the modulated final data, we get $|\bm{l}^+(T_n)|=|\bm{\mathfrak{l}}^{+}| \leq e^{-\frac{3}{2}c_0T_n}$ and $|\bm{l}^-(T_n)|=0\leq \frac{1}{2} e^{-\frac{3}{2}c_0T_n}$. Thus the conditions are satisfied at $t=T_n$. By the continuity of the solution and parameters, $T(\bm{\mathfrak{l}}^{+})$ is well-defined for $\bm{\mathfrak{l}}^{+}$ such that $|\bm{\mathfrak{l}}^{+}|\leq e^{-\frac{3}{2}c_0T_n}$.

To show the uniform backward estimate, i.e., Proposition \ref{ube}, it is enough to prove that for any $n$ there exists $\bm{\mathfrak{l}}^{+} \in B_{\mathbb{R}^{K+1}}(e^{-\frac{3}{2}c_0T_n})$ such that $T(\bm{\mathfrak{l}}^{+})=T_{0}$.

\subsubsection{Estimates}

\begin{lemma}[Estimates of time derivative of modulated parameters]
For any $t \in [T(\bm{\mathfrak{l}}^{+}),T_{n}]$, the following estimates hold: 
\begin{align*}
	\left| \dot{\bm{y}}(t) \right| + \left| \dot{\bm{\mu}}(t) \right| \leq C\|w(t)\|_{H^{1}} + C e^{-2c_{0}t},
\end{align*}
\end{lemma}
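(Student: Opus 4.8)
The plan is to differentiate in time the $2K-1$ orthogonality conditions that define $\bm{y}(t)$ and $\bm{\mu}(t)$ — namely $\Im\int_{\mathbb{R}}\overline{w}\,\widetilde{\mathcal{R}}_{k}\,dx=0$ for $k\in\llbracket 1,K\rrbracket$ and $\Re\int_{\mathbb{R}}\overline{w}\,D_x\widetilde{\mathcal{R}}_{k}\,dx=0$ for $k\in\llbracket 1,K\rrbracket\setminus\{k_{0}\}$ — and to read off $(\dot{\bm{y}},\dot{\bm{\mu}})$ by inverting the resulting near-diagonal linear system. All estimates are taken on $[T(\bm{\mathfrak{l}}^{+}),T_{n}]$, where the bootstrap assumptions give in particular $\|w(t)\|_{H^{1}}+|\bm{y}(t)|+|\bm{\mu}(t)|\le e^{-c_{0}t}\le\varepsilon_{0}$.

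First I would derive the evolution equation for $w=u-\widetilde{\mathcal{R}}$. Using that $u$ solves \eqref{deltaNLS}, that $Q_{k_{0},\gamma}$ solves its standing-wave equation (so that the action of the $\gamma\delta$ term on $\widetilde{\mathcal{R}}_{k_{0}}$ is exactly absorbed and produces \emph{no} $O(1)$ source — this is the one place where the repulsive $\delta$ enters nontrivially), and that each boosted profile $\widetilde{\mathcal{R}}_{k}$ with frozen parameters solves the corresponding (potential-free, for $k\ne k_{0}$) equation, a direct computation gives
\begin{align*}
i\partial_{t}w=(-\partial_{x}^{2}-\gamma\delta)w+\sum_{k=1}^{K}\bigl(i\dot{y}_{k}\,D_x\widetilde{\mathcal{R}}_{k}+\dot{\mu}_{k}\,\widetilde{\mathcal{R}}_{k}\bigr)+\mathcal{N}(w)+\mathcal{E},
\end{align*}
with $\dot{y}_{k_{0}}\equiv0$, where $\mathcal{N}(w):=-\bigl(|u|^{p-1}u-|\widetilde{\mathcal{R}}|^{p-1}\widetilde{\mathcal{R}}\bigr)$ satisfies $|\mathcal{N}(w)|\lesssim|\widetilde{\mathcal{R}}|^{p-1}|w|+|w|^{p}$ pointwise, and $\mathcal{E}$ collects the soliton--soliton cross terms (nonlinear and linear), together with the interaction of the $\delta$ potential with the moving profiles $\widetilde{\mathcal{R}}_{k}$, $k\ne k_{0}$; by the choice of $c_{0}$ (separation of the $v_{k}$'s and exponential decay of each $Q_{k}$) one has $\|\mathcal{E}(t)\|_{H^{-1}}\lesssim e^{-2c_{0}t}$, in fact much faster.

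Next, differentiating the $2K-1$ orthogonality conditions, substituting $\partial_{t}w$ from the above and pairing, I obtain a linear system $M(t)(\dot{\bm{y}}(t),\dot{\bm{\mu}}(t))^{\mathsf{T}}=\mathbf{b}(t)$. The diagonal of $M(t)$ consists of the blocks $\|\partial_{x}Q_{k}\|_{L^{2}}^{2}$ (from $\Re\int|D_x\widetilde{\mathcal{R}}_{k}|^{2}$ in the equations coming from the $D_x\widetilde{\mathcal{R}}_{k}$-conditions) and $\|Q_{k}\|_{L^{2}}^{2}$ (from $\int|\widetilde{\mathcal{R}}_{k}|^{2}$ in the equations coming from the $\widetilde{\mathcal{R}}_{k}$-conditions); all off-diagonal entries are $O(e^{-2c_{0}t})$ (cross-soliton terms) plus $O(\|w\|_{H^{1}})$ coming from the $\Im\int\overline{w}\,\partial_{t}\widetilde{\mathcal{R}}_{k}$ and $\Re\int\overline{w}\,\partial_{t}(D_x\widetilde{\mathcal{R}}_{k})$ contributions (which also carry harmless factors $|\dot{\bm{y}}|+|\dot{\bm{\mu}}|$ that can be moved to the left-hand side). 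Hence $M(t)=D+O(\varepsilon_{0})$ with $D$ invertible and uniformly bounded, so $M(t)$ is invertible with $\|M(t)^{-1}\|\le C$ uniformly in $n,t$. The vector $\mathbf{b}(t)$ is then bounded: the genuinely linear-in-$w$ Schr\"{o}dinger/potential contributions, after integration by parts in the quadratic-form domain $H^{1}$ of $-\Delta_{\gamma}$ (so that only $\int\partial_{x}\overline{w}\,\partial_{x}\widetilde{\mathcal{R}}_{k}$ and $\gamma\,\overline{w(0)}\,\widetilde{\mathcal{R}}_{k}(0)$ survive), are $O(\|w\|_{H^{1}})$; the term $\mathcal{N}(w)$, estimated in one dimension via $H^{1}\hookrightarrow L^{\infty}$ and $\|w\|_{H^{1}}\le\varepsilon_{0}$, is $O(\|w\|_{H^{1}})$; and $\mathcal{E}$ contributes $O(e^{-2c_{0}t})$. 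Applying $M(t)^{-1}$ yields $|\dot{\bm{y}}(t)|+|\dot{\bm{\mu}}(t)|\le C\|w(t)\|_{H^{1}}+Ce^{-2c_{0}t}$.

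The main obstacle is the bookkeeping in the last two steps: one must check that every cross term and every $\delta$-profile interaction term genuinely decays faster than $e^{-2c_{0}t}$ (using $v_{j}\ne v_{k}$, the definition $\sqrt{c_{0}}=\tfrac{1}{10}\min\{\dots\}$, and the exact exponential decay of the $Q_{k}$'s), and — more delicately — that the nonlinear and cross contributions to $\mathbf{b}$ are really $O(\|w\|_{H^{1}})$ rather than $O(\|w\|_{H^{1}})+O(1)$. The latter relies precisely on $\widetilde{\mathcal{R}}_{k_{0}}$ being an exact standing wave of \eqref{deltaNLS}, so that the repulsive $\delta$-potential produces no uncontrolled source term; for the moving components the $\delta$ is simply far from their support and its effect is exponentially small.
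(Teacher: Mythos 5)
Your proposal is correct and follows essentially the same route as the paper: derive the equation satisfied by $w$ (whose right-hand side carries the $\dot{y}_k D_x\widetilde{\mathcal{R}}_k$, $\dot{\mu}_k\widetilde{\mathcal{R}}_k$ terms plus exponentially small soliton--soliton and $\delta$-interaction sources), pair it against $\widetilde{\mathcal{R}}_j$ and $D_x\widetilde{\mathcal{R}}_j$ using the orthogonality conditions, and invert the resulting near-diagonal system. The paper merely phrases the inversion as absorbing the $e^{-c_0 t}(|\dot{\bm{y}}|+|\dot{\bm{\mu}}|)$ terms from the two inequalities \eqref{eq2.8}--\eqref{eq2.9}, which is the same step as your $M(t)=D+O(\varepsilon_0)$ argument.
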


\begin{proof}
Let $w=u-\sum_{k=1}^{K}\widetilde{\mathcal{R}}_{k}$. 
By a simple calculation, $w$ satisfies the following equation: 
\begin{align}
	\notag
	&(i\partial_{t} + \partial_{x}^{2} +\gamma \delta) w +\mathcal{N}(w+\widetilde{\mathcal{R}}) - \sum_{k=1}^{K}\mathcal{N}(\widetilde{\mathcal{R}}_{k})
	\\ \label{eq:w}
	&=i \sum_{k\neq k_{0}} \dot{y_{k}} D_x\widetilde{\mathcal{R}}_{k}
	+\sum_{k=1}^{K} \dot{\mu_{k}} \widetilde{\mathcal{R}}_{k}
	-\gamma \sum_{k\neq k_{0}} \delta \widetilde{\mathcal{R}}_{k}
\end{align}
in the distribution sense. 
Letting $j \in \llbracket 1, K \rrbracket$,
multiplying $\overline{\widetilde{\mathcal{R}}_{j}}$ with \eqref{eq:w}, integrating it, and taking the real part, we obtain
\begin{align}
\label{eq2.8}
	|\dot{\mu_{j}}| 
	\leq C \|w\|_{H^{1}} + C e^{-2c_{0}t}
	+e^{-c_{0}t} \sum_{k=1}^K |\dot{y}_{k}|
	+e^{-c_{0}t} \sum_{k=1}^K |\dot{\mu}_{k}|
\end{align}
by using the orthogonality $\Im \int  \overline{w}\widetilde{\mathcal{R}_{j}} dx=0$. 
Letting $j \neq k_{0}$, multiplying $\overline{D_x \widetilde{\mathcal{R}}_{j}}$ with \eqref{eq:w}, integrating it, and taking the imaginary part, we obtain
\begin{align}
\label{eq2.9}
	|\dot{y_{j}}| \leq C \|w\|_{H^{1}} + C e^{-2c_{0}t}
	+e^{-c_{0}t} \sum_{k=1}^K |\dot{\mu}_{k}|
	+e^{-c_{0}t} \sum_{k=1}^K |\dot{y}_{k}|
\end{align}
by using the orthogonality $\Re \int \overline{w} D_x \widetilde{\mathcal{R}}_{j} dx=0$. 
Combining \eqref{eq2.8} and \eqref{eq2.9}, we get the statement.
\end{proof}

\begin{lemma}[Estimates of time derivatives of $\bm{a}^{\pm}$ and $b^{\pm}$]
\label{lem2.8}
We have the following estimates: 
\begin{align*}
	&\left| \frac{da_{k}^{\pm}}{dt} \pm \mathfrak{y}_{k} a_{k}^{\pm} \right|
	\leq C \|w(t)\|_{H^{1}}^{2} + Ce^{-3c_{0}t},
	\\
	&\left| \frac{d b^{\pm}}{dt} \pm \mathfrak{z}_{k} b^{\pm} \right|
	\leq C \|w(t)\|_{H^{1}}^{2} + Ce^{-3c_{0}t}.
\end{align*}
\end{lemma}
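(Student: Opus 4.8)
The plan is to derive evolution equations for $a_k^{\pm}(t) = \Im\int_{\mathbb{R}} \widetilde{\mathcal{Y}}_k^{\mp}(t,x)\overline{w(t,x)}\,dx$ and $b^{\pm}(t) = \Im\int_{\mathbb{R}} \widetilde{\mathcal{Z}}_{k_0}^{\mp}(t,x)\overline{w(t,x)}\,dx$ by differentiating in time and using equation \eqref{eq:w} for $w$. First I would compute $\frac{d}{dt} a_k^{\pm}$ using the product rule: one term comes from $\partial_t \widetilde{\mathcal{Y}}_k^{\mp}$ (which involves both the frozen profile dynamics and the modulation parameter derivatives $\dot{y}_k, \dot{\mu}_k$), and the other from $\partial_t w$, which we replace using \eqref{eq:w}. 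The key algebraic point is that the profile $Y_k^{\mp}$ is an eigenfunction: $\boL_{\omega_k,\gamma_k} Y_k^{\pm} = \pm\mathfrak{y}_k Y_k^{\pm}$, so after transporting to the moving frame, the linear part of the equation acting on $\widetilde{\mathcal{Y}}_k^{\mp}$ produces exactly the term $\mp\mathfrak{y}_k a_k^{\pm}$ (up to the imaginary/real part bookkeeping that encodes the $\mathbb{R}$-linear structure). This is the mechanism behind the $\pm\mathfrak{y}_k a_k^{\pm}$ term in the claimed bound; the analogous statement with $\mathfrak{z}_k$ and $Z_{k_0}^{\pm}$ handles $b^{\pm}$.

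Next I would collect and estimate the error terms. There are three sources: (i) the nonlinear remainder $\mathcal{N}(w+\widetilde{\mathcal{R}}) - \sum_k \mathcal{N}(\widetilde{\mathcal{R}}_k)$ minus its linearization around each soliton, which after pairing against the (spatially localized, exponentially decaying) function $\widetilde{\mathcal{Y}}_k^{\mp}$ or $\widetilde{\mathcal{Z}}_{k_0}^{\mp}$ is quadratic in $w$, giving the $\|w(t)\|_{H^1}^2$ contribution (here one uses Sobolev embedding $H^1\hookrightarrow L^\infty$ in 1D and $p>5$); (ii) the soliton-soliton interaction terms $\mathcal{N}(\widetilde{\mathcal{R}}) - \sum_k\mathcal{N}(\widetilde{\mathcal{R}}_k)$ and the delta-interaction terms $\gamma\sum_{k\neq k_0}\delta\widetilde{\mathcal{R}}_k$, which are supported where two solitons overlap or where a moving soliton meets the origin — both are $O(e^{-3c_0 t})$ since the relevant separations grow linearly in $t$ with rate controlled by $\sqrt{c_0} \geq \tfrac{1}{10}\min\{\ldots, v_2-v_1,\ldots\}$, and the extra power (cube rather than just $e^{-c_0 t}$) comes from combining one factor of spatial decay of the localized pairing function with two factors from the interaction; (iii) the modulation-derivative terms $\dot{y}_k D_x\widetilde{\mathcal{R}}_k$ and $\dot{\mu}_k\widetilde{\mathcal{R}}_k$ — these pair against $\widetilde{\mathcal{Y}}_k^{\mp}$, but since $Y_k^{\pm}$ is orthogonal to $Q_k$ (Proposition \ref{xP2.1}(3)) and has the opposite parity to... — more precisely, one uses that $\langle Y_k^{\pm}, Q_k\rangle = 0$ and that on a single soliton the pairing of $D_x Q_k$ against $Y_k^{\pm}$ either vanishes by parity (for $k=k_0$, $Y$ even, $D_xQ$ odd) or is absorbed into an $e^{-c_0 t}(|\dot{\bm y}| + |\dot{\bm\mu}|)$ error; invoking the previous lemma $|\dot{\bm y}| + |\dot{\bm\mu}| \lesssim \|w\|_{H^1} + e^{-2c_0 t}$ and then the bootstrap bound $\|w(t)\|_{H^1}\leq e^{-c_0 t}$ on $[T(\bm{\mathfrak{l}}^+),T_n]$ (from condition (2) in the definition of $T(\bm{\mathfrak{l}}^+)$), any term of the form $e^{-c_0 t}|\dot{\bm y}|$ becomes $O(e^{-2c_0 t}\|w\|_{H^1} + e^{-4c_0 t})$, which is absorbed into $\|w\|_{H^1}^2 + e^{-3c_0 t}$ after one more use of $\|w\|_{H^1}\leq e^{-c_0 t}$.

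Concretely, the order of steps is: (1) differentiate $a_k^{\pm}$, split into $\partial_t\widetilde{\mathcal{Y}}_k^{\mp}$-term and $\partial_t w$-term; (2) substitute \eqref{eq:w} into the second term and integrate by parts to move $\partial_x^2 + \gamma\delta$ onto $\widetilde{\mathcal{Y}}_k^{\mp}$; (3) recognize that $(i\partial_t + \partial_x^2+\gamma\delta)$ applied to $\widetilde{\mathcal{Y}}_k^{\mp}$, together with the linearized nonlinear term around $\widetilde{\mathcal{R}}_k$, reproduces the eigenvalue relation and yields $\mp\mathfrak{y}_k a_k^{\pm}$ plus modulation corrections; (4) bound all remaining terms as $O(\|w\|_{H^1}^2 + e^{-3c_0 t})$ using the localization of $Y_k^{\mp}, Z_{k_0}^{\mp}$, soliton separation, Sobolev embedding, and the modulation-derivative lemma; (5) repeat verbatim for $b^{\pm}$ with $Z_{k_0}^{\pm}$. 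I expect the main obstacle to be step (3)–(4) bookkeeping: carefully matching the $\mathbb{R}$-linear (real/imaginary part) structure of the pairing with the matrix form of $\boL_{\omega_k,\gamma_k}$ so that the eigenvalue $\pm\mathfrak{y}_k$ (resp. $\pm\mathfrak{z}_{k_0}$) emerges with the correct sign, and simultaneously verifying that the "diagonal" modulation terms (those supported on the $k$-th soliton itself, not on an overlap region) genuinely cancel or are absorbable — the orthogonality conditions satisfied by $w$ (namely $\Im\int\overline{w}\widetilde{\mathcal{R}}_k = 0$ and $\Re\int\overline{w}D_x\widetilde{\mathcal{R}}_k = 0$) and the orthogonality $\langle Y_k^{\pm}, Q_k\rangle_{L^2} = 0$ are exactly what is needed here, and making this precise is the delicate point.
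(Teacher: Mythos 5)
Your plan is essentially the paper's intended proof: the authors omit this argument and simply cite the analogous computation in \cite{CMM11}, which is exactly what you outline (differentiate the pairings, substitute the equation for $w$, transfer the linear part onto $\widetilde{\mathcal{Y}}_k^{\mp}$, $\widetilde{\mathcal{Z}}_{k_0}^{\mp}$ via the eigenvalue relations, kill the diagonal modulation terms using $\langle Y_k^{\pm},Q_k\rangle_{L^2}=0$ and the evenness of $Y_k^{\pm}$ against the odd $\partial_x Q_k$, and bound everything else by quadratic-in-$w$ terms plus exponentially small soliton--soliton and soliton--delta interactions, using the modulation-derivative lemma and the bootstrap bound $\|w\|_{H^1}\le e^{-c_0t}$). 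One caution on the sign bookkeeping you yourself flag as delicate: with the printed conventions ($a_k^{\pm}$ paired against $\widetilde{\mathcal{Y}}_k^{\mp}$ and $\boL_{\omega_k,\gamma_k}Y_k^{\pm}=\pm\mathfrak{y}_kY_k^{\pm}$, i.e. $L^-_{\omega_k,\gamma_k}Y_2=\mathfrak{y}_kY_1$ and $L^+_{\omega_k,\gamma_k}Y_1=-\mathfrak{y}_kY_2$) the linear flow actually yields $\frac{d}{dt}a_k^{\pm}\approx\pm\mathfrak{y}_k a_k^{\pm}$, so when you carry out the real/imaginary computation you should expect to reconcile a convention mismatch between the definition of $a_k^{\pm}$ and the displayed estimate (a paper-level sign slip) rather than to reproduce the printed $\mp\mathfrak{y}_k a_k^{\pm}$ verbatim; the roles required downstream (direct integration for $\bm{l}^-$, topological argument for $\bm{l}^+$) tell you which convention is the intended one.
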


\begin{proof}
This follows from the similar calculation to \cite{CMM11}. We omit the proof. 
\end{proof}

\subsubsection{Control of unstable directions}

First, we give the following control estimates except for $\bm{l}^\pm$. 

\begin{lemma}
\label{lem2.9}
For large $T_{0}$ (independent of $n$) and for all $\bm{\mathfrak{l}}^{+}\in B_{\mathbb{R}^{K+1}}(e^{-\frac{3}{2}c_{0}T_{n}})$, the following hold for all $t \in [T(\bm{\mathfrak{l}}^{+}),T_{n}]$: 
\begin{enumerate}[(1)]
\item $\|u_{n}(t)-\mathcal{R}(t)\|_{H^{1}} \leq C e^{-c_{0}t} \leq \varepsilon_{0}/2$.
\item 
\begin{align*}
	&e^{c_{0}t}w(t) \in B_{H^{1}}\left(\frac{1}{2}\right), 
	\\
	&e^{c_{0}t}\bm{y}(t) \in B_{\mathbb{R}^{K-1}}\left(\frac{1}{2}\right), \quad  
	e^{c_{0}t}\bm{\mu}(t) \in B_{\mathbb{R}^{K}}\left(\frac{1}{2}\right),
\end{align*}
\end{enumerate}
\end{lemma}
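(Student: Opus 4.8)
\noindent The plan is a bootstrap argument on the interval $[T(\bm{\mathfrak{l}}^{+}),T_{n}]$. There, by the very definition of $T(\bm{\mathfrak{l}}^{+})$, the bounds in properties (1)--(2) hold with constant $1$, and we upgrade them to constant $\tfrac12$ (and to $\leq Ce^{-c_{0}t}\leq\varepsilon_{0}/2$), provided $T_{0}$ is taken large independently of $n$. The inputs are: the coercivity Propositions~\ref{P2.10} and~\ref{P2.11}; the modulation Lemma~\ref{Lemma:ModInd}; the derivative bound $|\dot{\bm{y}}(t)|+|\dot{\bm{\mu}}(t)|\leq C\|w(t)\|_{H^{1}}+Ce^{-2c_{0}t}$ proved above; the consequences of Proposition~\ref{P3.3} that $\|w(T_{n})\|_{H^{1}}+|\bm{y}(T_{n})|+|\bm{\mu}(T_{n})|\leq C|\bm{\mathfrak{l}}^{+}|\leq Ce^{-\frac32 c_{0}T_{n}}$ and $|\bm{l}^{+}(T_{n})|+|\bm{l}^{-}(T_{n})|\leq|\bm{\mathfrak{l}}^{+}|$; and the a priori bounds $\|w(t)\|_{H^{1}}\leq e^{-c_{0}t}$, $|\bm{l}^{+}(t)|^{2}+|\bm{l}^{-}(t)|^{2}\leq 2e^{-3c_{0}t}$, again part of the definition of $T(\bm{\mathfrak{l}}^{+})$. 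Note that for this lemma we do \emph{not} need the dynamics of $\bm{l}^{\pm}$ (Lemma~\ref{lem2.8}); their smallness is already granted.

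\noindent The core is a localized action functional. Pick a time-dependent partition of unity $\phi_{1}(t,\cdot),\dots,\phi_{K}(t,\cdot)$ with $\sum_{k}\phi_{k}\equiv1$, $\phi_{k}\equiv1$ near the $k$-th soliton center $v_{k}t+x_{k}$, the transition between $\phi_{k-1}$ and $\phi_{k}$ occupying (say) the middle third of the linearly growing gap between the $(k-1)$-st and $k$-th solitons, and $\phi_{k_{0}}\equiv1$ on a symmetric interval about the origin of radius $\sim t$; then $\|\partial_{x}\phi_{k}(t)\|_{L^{\infty}}+\|\partial_{t}\phi_{k}(t)\|_{L^{\infty}}\leq C/t$ for $t\geq T_{0}$. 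Put
\[
\mathcal{G}(t):=E_{\gamma}(u(t))+\sum_{k=1}^{K}\Big[\frac{\omega_{k}}{2}\int_{\mathbb{R}}|u|^{2}\phi_{k}\,dx-\frac{v_{k}}{2}\Im\int_{\mathbb{R}}\overline{u}\,\partial_{x}u\,\phi_{k}\,dx\Big].
\]
Because $\sum_{k}\phi_{k}\equiv1$ and $\sum_{k}\gamma_{k}=\gamma$, the top-order part of $\mathcal{G}$ is the conserved energy $E_{\gamma}(u)$, so only the cutoffs contribute to $\tfrac{d}{dt}\mathcal{G}$; moreover the momentum correction is absent for $k_{0}$ (as $v_{k_{0}}=0$), so the $\delta$-potential — supported where $\partial_{x}\phi_{k}\equiv0$ for $k\neq k_{0}$ — never enters these terms. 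Using \eqref{eq:w}, integration by parts, $\partial_{t}|u|^{2}=-2\partial_{x}\Im(\overline{u}\,\partial_{x}u)$, and $u=\widetilde{\mathcal{R}}+w$ with $\widetilde{\mathcal{R}}_{j}=O(e^{-ct})$ on the transition regions, one gets
\[
\Big|\tfrac{d}{dt}\mathcal{G}(t)\Big|\leq \tfrac{C}{t}\,\|w(t)\|_{H^{1}}^{2}+Ce^{-3c_{0}t},\qquad t\in[T(\bm{\mathfrak{l}}^{+}),T_{n}],
\]
the small terms collecting soliton--soliton and (for $k\neq k_{0}$) soliton--$\delta$ interactions, which decay faster than $e^{-3c_{0}t}$ by the choice of $\sqrt{c_{0}}$. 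On the other hand, Taylor expanding around $\widetilde{\mathcal{R}}(t)$ and using that each $\widetilde{\mathcal{R}}_{k}$ is an almost-critical point of its own soliton action (a constant invariant under its own translation and phase), $\mathcal{G}(t)=\mathcal{A}+\tfrac12\mathcal{Q}(t)+O(\|w(t)\|_{H^{1}}^{3})+O(e^{-ct})$, where $\mathcal{A}$ is a constant and $\mathcal{Q}(t)$ is $\sum_{k}B_{\omega_{k},\gamma_{k}}$ evaluated (after conjugating away the Galilean phases) on the pieces of $w(t)$ near each soliton, glued by the $\phi_{k}$ with errors $O((\|\partial_{x}\phi\|_{L^{\infty}}+e^{-ct})\|w(t)\|_{H^{1}}^{2})$.

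\noindent Integrating $\tfrac{d}{dt}\mathcal{G}$ from $t$ to $T_{n}$, inserting the expansion at both endpoints, and using $\|w(T_{n})\|_{H^{1}}\leq Ce^{-\frac32 c_{0}T_{n}}$, $\|w(s)\|_{H^{1}}\leq e^{-c_{0}s}$ and $\int_{t}^{\infty}s^{-1}e^{-2c_{0}s}\,ds\lesssim t^{-1}e^{-2c_{0}t}$, one gets $\mathcal{Q}(t)\leq Ce^{-3c_{0}t}+\tfrac{C}{T_{0}}e^{-2c_{0}t}+C\|w(t)\|_{H^{1}}^{3}$. Applying Proposition~\ref{P2.10} to the pieces near the moving solitons and Proposition~\ref{P2.11} to the piece near $\mathcal{R}_{k_{0}}$ — the translation and $iQ_{k}$ directions in those estimates being killed by the modulation orthogonalities, and the surviving unstable projections being exactly $a_{k}^{\pm}(t)$, $b^{\pm}(t)$ — yields
\[
\|w(t)\|_{H^{1}}^{2}\leq C\,\mathcal{Q}(t)+C\big(|\bm{l}^{+}(t)|^{2}+|\bm{l}^{-}(t)|^{2}\big)+\tfrac{C}{T_{0}}\|w(t)\|_{H^{1}}^{2}.
\]
Since $|\bm{l}^{+}(t)|^{2}+|\bm{l}^{-}(t)|^{2}\leq 2e^{-3c_{0}t}$ and $\|w(t)\|_{H^{1}}^{3}\leq e^{-c_{0}T_{0}}\|w(t)\|_{H^{1}}^{2}$, absorbing the $\|w\|_{H^{1}}^{2}$ terms for $T_{0}$ large gives $\|w(t)\|_{H^{1}}^{2}\leq \tfrac{1}{16}e^{-2c_{0}t}$, i.e.\ $e^{c_{0}t}w(t)\in B_{H^{1}}(\tfrac12)$. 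Feeding $\|w(s)\|_{H^{1}}\leq\tfrac14 e^{-c_{0}s}$ into the derivative bound and integrating backward from $T_{n}$ (where $|\bm{y}(T_{n})|+|\bm{\mu}(T_{n})|\leq Ce^{-\frac32 c_{0}T_{n}}$) gives $e^{c_{0}t}\bm{y}(t)\in B_{\mathbb{R}^{K-1}}(\tfrac12)$ and $e^{c_{0}t}\bm{\mu}(t)\in B_{\mathbb{R}^{K}}(\tfrac12)$ for $T_{0}$ large; and then $\|u(t)-\mathcal{R}(t)\|_{H^{1}}\leq\|w(t)\|_{H^{1}}+\|\widetilde{\mathcal{R}}(t)-\mathcal{R}(t)\|_{H^{1}}\leq C(\|w(t)\|_{H^{1}}+|\bm{y}(t)|+|\bm{\mu}(t)|)\leq Ce^{-c_{0}t}\leq\varepsilon_{0}/2$, which is (1). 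This closes the bootstrap.

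\noindent\textbf{Main obstacle.} The delicate step is the construction of the cutoffs and the proof that $\tfrac{d}{dt}\mathcal{G}$ carries a genuinely decaying prefactor (the $C/t$ above, from the widening transition windows) in front of $\|w(t)\|_{H^{1}}^{2}$: with a merely bounded prefactor one only recovers the a priori bound $\|w(t)\|_{H^{1}}\lesssim e^{-c_{0}t}$ and the bootstrap does not close. A second, more routine, technical point is gluing the single-soliton coercivities of Propositions~\ref{P2.10}--\ref{P2.11} into the coercivity of $\mathcal{Q}(t)$ while controlling the cross terms by the exponential separation of the solitons — consistently with the fact that Proposition~\ref{P2.11} has no translation direction, matching the absence of the $\partial_{x}Q_{k_{0}}$-orthogonality in the modulation.
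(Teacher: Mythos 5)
Your proposal follows essentially the same route as the paper's proof: a bootstrap on $[T(\bm{\mathfrak{l}}^{+}),T_n]$ using a localized boosted energy, an almost-monotonicity estimate for it with a small prefactor in front of $\|w\|_{H^1}^2$ (your widening windows give $C/t$, the paper fixes a large scale $L$ and gets $C/L$ in Lemma~\ref{lem3.9} and Corollary~\ref{cor3.10} -- interchangeable), a quadratic expansion around $\widetilde{\mathcal R}$ (Lemma~\ref{lem3.11}), a glued coercivity via Propositions~\ref{P2.10} and~\ref{P2.11} with the modulation orthogonalities killing the $iQ_k$ and translation directions and the unstable projections controlled by the a priori bounds on $\bm{l}^{\pm}$ from the definition of $T(\bm{\mathfrak{l}}^{+})$ (Lemma~\ref{lem3.12}, Corollary~\ref{cor3.12.0}), and finally integration of the parameter ODE from $T_n$ plus the triangle inequality for~(1). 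You are also right that Lemma~\ref{lem2.8} is not needed here.

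One concrete slip: your displayed functional $\mathcal G$ has mass-correction coefficient $\tfrac{\omega_k}{2}$, whereas it must be $\tfrac12\bigl(\omega_k+\tfrac{|v_k|^2}{4}\bigr)$ as in the paper. With your coefficient the linear-in-$w$ term does not cancel, since the moving solitons satisfy $-\partial_x^2\widetilde{\mathcal R}_k+(\omega_k+\tfrac{|v_k|^2}{4})\widetilde{\mathcal R}_k+iv_k\partial_x\widetilde{\mathcal R}_k=|\widetilde{\mathcal R}_k|^{p-1}\widetilde{\mathcal R}_k$, and after conjugating away the Galilean phase the localized quadratic form becomes $B_{\omega_k-\frac{|v_k|^2}{4},0}$ rather than $B_{\omega_k,0}$, so the expansion $\mathcal G=\mathcal A+\tfrac12\mathcal Q+O(\|w\|_{H^1}^3)$ and the appeal to Proposition~\ref{P2.10} both break as written; your surrounding narrative presupposes the corrected coefficient, so the fix is immediate. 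A smaller bookkeeping point: to close the $\bm y,\bm\mu$ step you must feed into $|\dot{\bm y}|+|\dot{\bm\mu}|\le C\|w\|_{H^1}+Ce^{-2c_0t}$ the improved bound with its \emph{tunable} prefactor, $\|w(s)\|_{H^1}\le(Ce^{-c_0T_0}+C/T_0)^{1/2}e^{-c_0s}$, not merely $\tfrac14 e^{-c_0s}$, since the fixed constant $C/(4c_0)$ from integrating the latter need not be below $\tfrac12$; your own intermediate estimate supplies this, exactly as the paper's $C/L$ smallness does.
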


As shown before, the estimates are fulfilled at $t=T_n$ by taking large $T_0$ (independent of $n$). 

To show Lemma \ref{lem2.9}, we prepare some notations and lemmas. We will use an energy estimate and the coercivity result to prove Lemma \ref{lem2.9}.

\begin{lemma}[localized virial identity]
\label{lvi}
Let $u$ be a solution to \eqref{deltaNLS}, $f\in C^{1}(\mathbb{R};\mathbb{R})$, and $g\in C^{3}(\mathbb{R};\mathbb{R})$ with $g(x)=0$ near the origin. Then we have
\begin{align*}
	\frac{d}{dt} \int f(x) |u(t,x)|^{2}dx =2\Im \int f'(x) \overline{u(t,x)} \partial_{x}u(t,x) dx
\end{align*} 
and 
\begin{align*}
	\frac{d}{dt} \Im \int g(x) \overline{u(t,x)} \partial_{x}u(t,x) dx
	&=\int g'(x)\left( 2|\partial_{x}u(t,x)|^{2} - \frac{p-1}{p+1} |u(t,x)|^{p+1}\right)dx 
	\\
	&\quad -\frac{1}{2} \int g'''(x) |u(t,x)|^{2}dx. 
\end{align*}
\end{lemma}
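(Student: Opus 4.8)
The plan is to treat both formulas as standard virial / local-conservation identities for one-dimensional NLS, with the Dirac delta at the origin entering only through a boundary effect. Because the manipulations below are merely formal for an $H^1$ solution, I would first establish the two identities for solutions issuing from data in $\boD(-\Del_\gam)$ — a dense subspace, preserved by the flow, on which $u\in C^1(I;L^2(\R))\cap C(I;\boD(-\Del_\gam))$, so that $u$ and $u_x$ have one-sided limits at $0$ and $(\partial_x^2 u)_{\mathrm{a.e.}}\in L^2$ — and then pass to the limit in the time-integrated form of each identity using local well-posedness and continuous dependence in $H^1(\R)$; every term on either side is continuous under $H^1$-convergence (in particular $\int g'|u|^{p+1}$, since $H^1(\R)\hookrightarrow L^{p+1}(\R)$ in dimension one). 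Only the computation for such regular $u$ needs to be displayed.

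For the first identity I would start from the fact that, with $\partial_x^2 u$ interpreted as the a.e.\ second derivative of $u$ (which for $u\in\boD(-\Del_\gam)$ coincides with the distribution $\partial_x^2 u+\gamma\delta u$ and lies in $L^2$), the equation reads $\partial_t u = i(\partial_x^2 u+|u|^{p-1}u)$; since $\overline u\,|u|^{p-1}u$ is real this gives $\partial_t|u|^2 = 2\Re(\overline u\,\partial_t u) = -2\Im(\overline u\,\partial_x^2 u)$. Multiplying by $f$ and integrating by parts separately over $(-\infty,0)$ and $(0,\infty)$ produces the boundary term $f(0)\,\overline{u(0)}\,(u'(0-)-u'(0+))$, which by the jump condition $u'(0+)-u'(0-)=-\gamma u(0)$ defining $\boD(-\Del_\gam)$ equals $\gamma f(0)|u(0)|^2\in\R$ and therefore disappears under $\Im$; the remaining interior term $\int f|\partial_x u|^2$ is also real, so what survives is exactly $2\Im\int f'\,\overline u\,\partial_x u\,dx$. (This step uses only $f\in C^1$, not $f\equiv0$ near the origin.)

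For the second identity I would exploit that $g$, and hence $g',g'',g'''$, vanishes on some interval $(-\eta,\eta)$, so every manipulation involves $u$ only on $\{|x|>\eta\}$, where $u$ solves the free equation $i\partial_t u+\partial_x^2 u+|u|^{p-1}u=0$ classically — the delta is simply not seen. From there it is the textbook computation: differentiate $\Im\int g\,\overline u\,\partial_x u\,dx$ in $t$, substitute $\partial_t u = i(\partial_x^2 u+|u|^{p-1}u)$ on $\supp g$, integrate by parts repeatedly — with no boundary contributions, those at the origin vanishing because $g$ and its derivatives are zero there and those at infinity by the decay of $u$ — and use $\Re(\overline u\,\partial_x u)=\tfrac12\partial_x|u|^2$, $\Re(\overline{\partial_x u}\,\partial_x^2 u)=\tfrac12\partial_x|\partial_x u|^2$, and $|u|^{p-1}\Re(\overline u\,\partial_x u)=\tfrac1{p+1}\partial_x|u|^{p+1}$ to rewrite everything as $g'$- and $g'''$-weighted integrals; collecting the coefficients yields $\int g'\bigl(2|\partial_x u|^2-\tfrac{p-1}{p+1}|u|^{p+1}\bigr)dx-\tfrac12\int g'''|u|^2\,dx$.

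The main obstacle is not the algebra, which is routine, but its rigorous justification at the available $H^1$ regularity — differentiating under the time integral and legitimizing the spatial integrations by parts — which is precisely what the density/approximation step at the outset handles. The only place where the delta potential genuinely intervenes is in the first identity, and there it is controlled simply by the observation that the jump term $\gamma f(0)|u(0)|^2$ is real and hence invisible to the imaginary part.
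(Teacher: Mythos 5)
Your proposal is correct, but it is genuinely more than what the paper does: the paper's ``proof'' of Lemma \ref{lvi} is only the citation of Banica--Visciglia \cite{BaVi16}, whereas you supply a self-contained verification. Your two key observations are exactly the right ones and are where the hypotheses of the lemma enter: in the mass identity the only trace of the potential is the boundary term $f(0)\overline{u(0)}\bigl(u'(0-)-u'(0+)\bigr)=\gamma f(0)|u(0)|^{2}$, which is real and hence annihilated by taking the imaginary part (so no hypothesis on $f$ at the origin is needed); in the momentum identity the assumption that $g$ vanishes near $0$ makes the computation purely the standard one for \eqref{NLS} on $\{|x|>\eta\}$, with no boundary contribution at the origin, and your bookkeeping of the coefficients ($2\int g'|\partial_x u|^2-\frac{p-1}{p+1}\int g'|u|^{p+1}-\frac12\int g'''|u|^2$) checks out. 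The regularization scheme is also the standard one; two small points you should make explicit if you write it up: (i) persistence of $\boD(-\Del_\gam)$-regularity under the flow is what licenses the displayed computation, and it is obtained in the usual Kato fashion by recovering $-\Del_\gam u=-i\partial_t u-|u|^{p-1}u\in L^2$ rather than by any claim that the nonlinearity preserves the jump condition (it does not); on $\{|x|>\eta\}$ the solution is only $H^2$ in $x$, not classical, which is all the integrations by parts require; and (ii) the lemma as stated tacitly assumes $f,f'$ and $g',g'''$ bounded (as they are for the cutoffs $\psi_k$ used later), which is also needed for your limiting argument, where each term in the time-integrated identities is indeed continuous under $H^1$-convergence via $H^1(\R)\hookrightarrow L^\infty\cap L^{p+1}$. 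With these remarks your argument is a complete and correct proof, essentially reproducing the computation the paper outsources to \cite{BaVi16}.
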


\begin{proof}
See \cite{BaVi16}. 
\end{proof}

We define a function $\psi$ by
\begin{align*}
	\psi(x) :=
	\begin{cases}
	0 & (x\leq -1),
	\\
	c' \int_{-1}^{x} e^{-\frac{1}{1-y^{2}}}dy & (-1 <x<1),
	\\
	1 & (x \geq 1),
	\end{cases}
\end{align*}
where $c':= (\int_{-1}^{1} e^{-\frac{1}{1-y^{2}}}dy)^{-1}$. Then the function $\psi$ satisfies that $\psi \in C^{3}(\mathbb{R})$, $0 \leq \psi \leq 1$,  and $\psi'(x) \geq 0$ for $x \in \mathbb{R}$. Moreover, we also find that there exists a constant $C>0$ such that 
\begin{align*}
	(\psi'(x))^{2} \leq C \psi (x) 
	\text{ and } (\psi''(x))^{2} \leq C \psi' (x),
\end{align*}
and it satisfies $\psi(x)+\psi(-x)=1$ for $x\in \mathbb{R}$. 

For $k \in \llbracket 2,K\rrbracket$, we set 
\begin{align*}
	\sigma_{k}:=\frac{1}{2}(v_{k-1}+v_{k}). 
\end{align*}

Since $v_{1}<...<v_{K}$, we have $\sigma_{2}<...<\sigma_{K}$. Moreover, $v_{k-1}<\sigma_{k}<v_{k}$ for any $k \in \llbracket 2,K\rrbracket$. 

For $L>0$, we define
\begin{align*}
	\psi_{k}(t,x):= \psi\left( \frac{x -\sigma_{k}t}{L}\right)-\psi\left( \frac{x -\sigma_{k+1}t}{L}\right)
	\text{ for }  k \in \llbracket 2,K-1 \rrbracket
\end{align*}
and 
\begin{align*}
	\psi_{1}(t,x):=1-\psi\left( \frac{x -\sigma_{2}t}{L}\right),
	\quad 
	\psi_{K}(t,x):=\psi\left( \frac{x -\sigma_{K}t}{L}\right). 
\end{align*}

We set
\begin{align*}
	\mathcal{M}_{k}(t)&=\mathcal{M}_{k}(u(t)):= \int_{\mathbb{R}} \psi_{k}(t,x)|u(t,x)|^{2}dx,
	\\
	\mathcal{P}_{k}(t)&=\mathcal{P}_{k}(u(t)):=\Im \int_{\mathbb{R}} \psi_{k}(t,x) \overline{u(t,x)} \partial_{x}u(t,x) dx.
\end{align*}

\begin{lemma}
\label{lem3.9}
Let $L>0$ be sufficiently large and $T_0$ be chosen larger according to $L$. 
For any $k\in\llbracket 1,K \rrbracket$, we have the following estimates:
\begin{align*}
	\left|\frac{d}{dt} \mathcal{M}_{k}(t)\right|
	\leq \frac{C}{L} \|w\|_{H^{1}}^{2} +  C e^{-3c_{0}t}
\end{align*}
for any $t \in [T(\bm{\mathfrak{l}}^{+}),T_{n}]$
We also have, for any $k\in\llbracket 1,K \rrbracket \setminus\{k_{0}\}$,
\begin{align*}
	\left|\frac{d}{dt} \mathcal{P}_{k}(t)\right|
	\leq  \frac{C}{L} \|w\|_{H^{1}}^{2} +  C e^{-3c_{0}t}
\end{align*}
for any $t \in [T(\bm{\mathfrak{l}}^{+}),T_{n}]$. 
\end{lemma}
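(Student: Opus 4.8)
The plan is to differentiate $\mathcal{M}_k$ and $\mathcal{P}_k$ directly, handling the part coming from the flow by the localized virial identities of Lemma~\ref{lvi} and the part coming from the explicit time dependence of $\psi_k$ by hand, and then to observe that every term produced is supported in the ``transition region'' $\mathcal{T}_k(t)$ --- the union of the (at most two) intervals $\{|x-\sigma_m t|\le L\}$, $m\in\{k,k+1\}$, on which $\psi_k(t,\cdot)$ is nonconstant --- where, after writing $u=\widetilde{\mathcal{R}}+w$, the soliton sum $\widetilde{\mathcal{R}}$ is exponentially small. First I would record that $\psi_k(t,\cdot)\in C^3$, that $|\partial_t\psi_k|+|\partial_x\psi_k|\lesssim L^{-1}$ and $|\partial_x^3\psi_k|\lesssim L^{-3}$, and that these derivatives vanish outside $\mathcal{T}_k(t)$. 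Splitting
$$
\frac{d}{dt}\mathcal{M}_k=\int(\partial_t\psi_k)\,|u|^2\,dx+\Big[\tfrac{d}{ds}\int\psi_k(t,\cdot)\,|u(s)|^2\,dx\Big]_{s=t}
$$
and using the first identity in Lemma~\ref{lvi} for the second term (with $f=\psi_k(t,\cdot)$; no hypothesis near $0$ is needed there, since the Dirac and nonlinear terms drop out of $\Im(\overline u\,\cdot)$, so $\partial_t|u|^2=-2\partial_x\Im(\overline u\,\partial_x u)$ with $\Im(\overline u\,\partial_x u)$ continuous at $0$), one gets
$$
\frac{d}{dt}\mathcal{M}_k=\int(\partial_t\psi_k)\,|u|^2\,dx+2\Im\int(\partial_x\psi_k)\,\overline u\,\partial_x u\,dx.
$$
For $\mathcal{P}_k$ with $k\neq k_0$, the crucial point is that $\psi_k(t,\cdot)$ vanishes near $0$ once $T_0$ is large relative to $L$: since a zero velocity is present, no $\sigma_m$ can equal $0$ (otherwise $v_{m-1}<0<v_m$ would be consecutive velocities, incompatible with $v_{k_0}=0$), and for $k\neq k_0$ the relevant $\sigma_m$ all lie on one side of $0$, so $\mathcal{T}_k(t)$ moves a distance $\gtrsim t$ away from the origin. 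Hence the second identity in Lemma~\ref{lvi} applies (with $g=\psi_k(t,\cdot)$), giving
$$
\frac{d}{dt}\mathcal{P}_k=\Im\int(\partial_t\psi_k)\,\overline u\,\partial_x u\,dx+\int(\partial_x\psi_k)\Big(2|\partial_x u|^2-\tfrac{p-1}{p+1}|u|^{p+1}\Big)\,dx-\tfrac12\int(\partial_x^3\psi_k)\,|u|^2\,dx.
$$

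Next I would use the separation of scales. On $[T(\bm{\mathfrak{l}}^{+}),T_n]$ the constraints defining $T(\bm{\mathfrak{l}}^{+})$ give $|y_j(t)|\le e^{-c_0t}\le1$, so $\widetilde{\mathcal{R}}_j$ is concentrated at $x=v_jt+O(1)$; since $\sigma_m$ lies strictly between two consecutive velocities, $|v_j-\sigma_m|\ge\tfrac12\min_l(v_l-v_{l-1})\ge5\sqrt{c_0}$ for all $j,m$, whence $\dist(v_jt+O(1),\mathcal{T}_k(t))\gtrsim\sqrt{c_0}\,t$. Combining this with the exponential decay $|Q_j(x)|+|Q_j'(x)|\lesssim e^{-\sqrt{\omega_j}|x|}$ and $\sqrt{\omega_j}\ge10\sqrt{c_0}$, one obtains $|\widetilde{\mathcal{R}}(t,x)|+|\partial_x\widetilde{\mathcal{R}}(t,x)|\lesssim e^{-3c_0t}$ pointwise on $\mathcal{T}_k(t)$, with room to spare. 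Substituting $u=\widetilde{\mathcal{R}}+w$ into the two identities above, each term carries a factor $L^{-j}$ with $j\ge1$ and an integral over a set of measure $\lesssim L$: the $\widetilde{\mathcal{R}}$-contributions are $\lesssim e^{-3c_0t}$; the purely $w$-contributions are $\lesssim L^{-1}\|w\|_{H^1}^2$ (for the $|u|^{p+1}$-term one uses $H^1(\mathbb{R})\hookrightarrow L^{p+1}(\mathbb{R})$ together with $\|w\|_{H^1}\le\varepsilon_0\le1$ to absorb the extra powers of $\|w\|_{H^1}$); and the cross terms are split by Young's inequality into these two types. Summing the finitely many contributions yields $|\tfrac{d}{dt}\mathcal{M}_k|+|\tfrac{d}{dt}\mathcal{P}_k|\le\frac{C}{L}\|w\|_{H^1}^2+Ce^{-3c_0t}$, as claimed.

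The computation is largely bookkeeping; the one place where care is genuinely needed is the geometric set-up --- checking that for every $k$ (in particular the indices adjacent to $k_0$) the transition region $\mathcal{T}_k(t)$ stays at distance $\gtrsim t$ from all $K$ soliton centers, and that for $k\neq k_0$ the support of $\psi_k$ stays away from the origin so that the momentum virial identity is licit. This last requirement is exactly what forces the order ``fix $L$ large, then take $T_0$ large depending on $L$'' in the statement; once it is in place, the exponential smallness of $\widetilde{\mathcal{R}}$ on $\mathcal{T}_k(t)$ does the work and the remaining estimates are standard.
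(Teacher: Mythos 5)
Your proof is correct and is exactly the localized virial/cut-off argument of Martel--Merle and C\^{o}te--Martel--Merle that the paper invokes (the paper omits the proof, citing \cite{MaMe06,CMM11}), including the paper-specific point from Remark \ref{rmk3.1}: for $k\neq k_0$ the weights $\psi_k(t,\cdot)$ vanish near the origin once $T_0$ is large relative to $L$, so the momentum identity of Lemma \ref{lvi} is applicable, while $\mathcal{P}_{k_0}$ is rightly excluded. Nothing further is needed.
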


\begin{remark}
\label{rmk3.1}
It is worth emphasizing that $\frac{d}{dt}\mathcal{P}_{k_0}(t)$ cannot be estimated as above since the support of $\psi_{k_0}$ includes the origin and the delta potential interacts at the origin. Fortunately, we do not need to estimate it since $\mathcal{P}_{k_0}$ will appear together with $v_{k_0}=0$ such as $v_k \mathcal{P}_{k}$. In the case that $v_k\neq 0$ for any $k$, we need to pay more attention since the support of $\psi_{k}$ includes the origin for some $k$. We give a trick to modify the proof of Lemma \ref{lem3.12} in that case in Appendix \ref{appC}. 
\end{remark}

\begin{proof}[Proof of Lemma \ref{lem3.9}]
This can be shown in the same way as in \cite{MaMe06} and \cite{CMM11} and thus we omit the proof.
\end{proof}

We define a boosted energy $\mathcal{G}$ by
\begin{align*}
	\mathcal{G}(t)=\mathcal{G}(u(t)):=E_{\gamma}(u(t)) + \sum_{k=1}^{K}\left\{ \frac{1}{2}\left( \omega_{k} + \frac{|v_{k}|^{2}}{4}\right)\mathcal{M}_{k}(u(t)) - \frac{v_{k}}{2}  \mathcal{P}_{k}(u(t))\right\}.
\end{align*}

\begin{corollary}
\label{cor3.10}
Let $L$ and $T_0$ be as in Lemma \ref{lem3.9}. 
We have the following estimate:
\begin{align*}
	\left| \frac{d}{dt} \mathcal{G}(t) \right| \leq \frac{C}{L}\|w\|_{H^{1}}^{2}+ C e^{-3c_{0}t}. 
\end{align*}
\end{corollary}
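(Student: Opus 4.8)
The plan is to differentiate $\mathcal{G}(t)$ term by term and invoke the estimates already established, the point being that the single quantity we cannot control enters $\mathcal{G}$ with a vanishing coefficient. First, since $E_{\gamma}$ is a conserved quantity for \eqref{deltaNLS}, the contribution of $E_{\gamma}(u(t))$ to $\frac{d}{dt}\mathcal{G}(t)$ vanishes identically, so that
\begin{align*}
\frac{d}{dt}\mathcal{G}(t) = \sum_{k=1}^{K}\left\{\frac{1}{2}\left(\omega_{k}+\frac{|v_{k}|^{2}}{4}\right)\frac{d}{dt}\mathcal{M}_{k}(t) - \frac{v_{k}}{2}\frac{d}{dt}\mathcal{P}_{k}(t)\right\}.
\end{align*}

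Next I would apply Lemma \ref{lem3.9}: the bound $\left|\frac{d}{dt}\mathcal{M}_{k}(t)\right|\leq \frac{C}{L}\|w\|_{H^{1}}^{2}+Ce^{-3c_{0}t}$ holds for every $k\in\llbracket 1,K\rrbracket$, while $\left|\frac{d}{dt}\mathcal{P}_{k}(t)\right|\leq \frac{C}{L}\|w\|_{H^{1}}^{2}+Ce^{-3c_{0}t}$ holds for every $k\in\llbracket 1,K\rrbracket\setminus\{k_{0}\}$, on the interval $[T(\bm{\mathfrak{l}}^{+}),T_{n}]$. The only term for which Lemma \ref{lem3.9} provides no information is $\frac{v_{k_0}}{2}\frac{d}{dt}\mathcal{P}_{k_0}(t)$, because $\psi_{k_0}$ is supported near the origin where the $\delta$-potential acts (cf. Remark \ref{rmk3.1}); but by the standing hypothesis $v_{k_0}=0$, so this term is simply absent from the sum above. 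Summing the remaining finitely many contributions — the fixed constants $\omega_{k}+|v_{k}|^{2}/4$ and $v_{k}$ being absorbed into $C$ — yields $\left|\frac{d}{dt}\mathcal{G}(t)\right|\leq \frac{C}{L}\|w\|_{H^{1}}^{2}+Ce^{-3c_{0}t}$, with $L$ and $T_{0}$ as in Lemma \ref{lem3.9}.

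The main obstacle here is conceptual rather than computational: one must recognize that the uncontrollable quantity $\frac{d}{dt}\mathcal{P}_{k_0}$ occurs in $\mathcal{G}$ only through the weight $v_{k_0}$, which vanishes precisely because $\mathcal{R}_{k_0}$ is the unmoving soliton — this is the structural reason the boosted energy is defined with these particular coefficients. All the genuine work (the localized virial computations and the decay of the soliton interactions) has already been carried out in Lemma \ref{lem3.9}, so the remaining argument is pure bookkeeping. I note that in the situation of Theorem \ref{thm1.2}, where every velocity is nonzero, this cancellation is unavailable and one must instead use the modification of the localized virial estimates indicated in Appendix \ref{appC}.
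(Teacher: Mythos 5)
Your argument is correct and is exactly the paper's intended proof: energy conservation kills the $E_\gamma$ term, Lemma \ref{lem3.9} handles every $\mathcal{M}_k$ and every $\mathcal{P}_k$ with $k\neq k_0$, and the uncontrolled $\mathcal{P}_{k_0}$ term drops out because its coefficient $v_{k_0}$ vanishes (precisely the point of Remark \ref{rmk3.1}). Nothing is missing.
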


\begin{proof}
This follows from the energy conservation law and Lemma \ref{lem3.9}. 
\end{proof}

For $w\in H^1(\mathbb{R})$, we define 
\begin{align*}
	\mathcal{H}_{\gamma}(w)&:=
	\int_{\mathbb{R}} |\partial_{x}w|^{2}dx  -\gamma |w(0)|^{2}
	\\
	&\quad - \sum_{k=1}^{K} \int_{\mathbb{R}} |\widetilde{\mathcal{R}}_{k}|^{p-1}|w|^{2} + (p-1) |\widetilde{\mathcal{R}}_{k}|^{p-3}\{\Re (\overline{\widetilde{\mathcal{R}}_{k}}w)\}^{2} dx
	\\
	&\quad  + \sum_{k=1}^{K} \left\{ \left( \omega_{k} + \frac{|v_{k}|^{2}}{4} \right) \mathcal{M}_k(w)
	- v_{k} \mathcal{P}_k(w)  \right\}. 
\end{align*}

\begin{lemma}
\label{lem3.11}
Let $L$ and $T_0$ be as in Lemma \ref{lem3.9}. 
For $t\in [T(\bm{\mathfrak{l}}^{+}),T_n]$, we have
\begin{align*}
	\mathcal{G}(u(t)) 
	= \mathcal{G}(\widetilde{\mathcal{R}}(t)) + \frac{1}{2}\mathcal{H}_{\gamma}(w)
	+O(e^{-3c_{0}t})
	+o(\|w\|_{H^1}^{2}),
\end{align*}
where $w= u- \widetilde{\mathcal{R}}$. 
\end{lemma}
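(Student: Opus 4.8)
The plan is to expand $\mathcal{G}(u(t)) = \mathcal{G}\big(\widetilde{\mathcal{R}}(t)+w(t)\big)$ about $\widetilde{\mathcal{R}}(t)$ by Taylor's theorem and to estimate the three resulting pieces separately. The starting observation is that, among the constituents of $\mathcal{G}$, the functionals $\mathcal{M}_k$ and $\mathcal{P}_k$ are exactly quadratic and the kinetic term $\tfrac12\|\partial_x u\|_{L^2}^2$ and the potential term $-\tfrac{\gamma}{2}|u(0)|^2$ of $E_\gamma$ are exactly quadratic in $u$, so the only genuinely nonlinear contribution to $\mathcal{G}$ comes from $-\tfrac1{p+1}\|u\|_{L^{p+1}}^{p+1}$. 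Hence
\begin{align*}
\mathcal{G}(\widetilde{\mathcal{R}}+w) = \mathcal{G}(\widetilde{\mathcal{R}}) + \langle \mathcal{G}'(\widetilde{\mathcal{R}}),w\rangle + \tfrac12\, D^2\mathcal{G}(\widetilde{\mathcal{R}})[w,w] + \mathrm{Rem}(w),
\end{align*}
where $\mathrm{Rem}$ comes solely from the $L^{p+1}$ term. Since $p>5$, the map $z\mapsto |z|^{p+1}$ is $C^2$ with $\big|D^2|z|^{p+1}-D^2|z'|^{p+1}\big|\lesssim |z-z'|(|z|+|z'|)^{p-2}$, so the integral form of the remainder, together with the one-dimensional embedding $\|w\|_{L^\infty}\lesssim \|w\|_{H^1}$ and the uniform bound $\|\widetilde{\mathcal{R}}\|_{L^\infty}\lesssim 1$, gives $|\mathrm{Rem}(w)|\lesssim \|w\|_{H^1}^3 = o(\|w\|_{H^1}^2)$, with constants independent of $t$; recall also that on $[T(\bm{\mathfrak{l}}^{+}),T_n]$ one has $\|w\|_{H^1}\le e^{-c_0 t}\le 1$.

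For the quadratic term I would compute $D^2\mathcal{G}(\widetilde{\mathcal{R}})[w,w]$ explicitly and compare it with $\mathcal{H}_\gamma(w)$: the two coincide except that $|\widetilde{\mathcal{R}}|^{p-1}|w|^2$ and $(p-1)|\widetilde{\mathcal{R}}|^{p-3}\{\Re(\overline{\widetilde{\mathcal{R}}}w)\}^2$ appear in $D^2\mathcal{G}(\widetilde{\mathcal{R}})[w,w]$ where $\mathcal{H}_\gamma$ carries $\sum_k|\widetilde{\mathcal{R}}_k|^{p-1}|w|^2$ and $(p-1)\sum_k|\widetilde{\mathcal{R}}_k|^{p-3}\{\Re(\overline{\widetilde{\mathcal{R}}_k}w)\}^2$. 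The difference is a finite sum of integrals controlled by the pairwise interaction quantities $\|\widetilde{\mathcal{R}}_j\widetilde{\mathcal{R}}_k\|_{L^\infty}$ ($j\neq k$), which are exponentially small in the soliton separation — the solitons being centred near $v_kt$, pairwise separated at rate $v_k-v_{k-1}\ge 10\sqrt{c_0}$, and each decaying like $e^{-\sqrt{\omega_k}|\cdot|}$ with $\sqrt{\omega_k}\ge 10\sqrt{c_0}$ — hence bounded by $e^{-\sigma t}$ for some $\sigma\gg 3c_0$ fixed by the definition of $c_0$. With $\|w\|_{L^2}^2\lesssim 1$ this yields $\tfrac12 D^2\mathcal{G}(\widetilde{\mathcal{R}})[w,w]=\tfrac12\mathcal{H}_\gamma(w)+O(e^{-3c_0 t})$.

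The core of the proof is the linear term. A direct computation of $\mathcal{G}'(\widetilde{\mathcal{R}})$ (using $\mathcal{M}_k'(\widetilde{\mathcal{R}})=2\psi_k\widetilde{\mathcal{R}}$ and $\mathcal{P}_k'(\widetilde{\mathcal{R}})=-2i\psi_k\partial_x\widetilde{\mathcal{R}}-i\psi_k'\widetilde{\mathcal{R}}$) gives
\begin{align*}
\langle \mathcal{G}'(\widetilde{\mathcal{R}}),w\rangle
&= \Big\langle -\partial_x^2\widetilde{\mathcal{R}} - \gamma\delta_0\widetilde{\mathcal{R}} - |\widetilde{\mathcal{R}}|^{p-1}\widetilde{\mathcal{R}} \\
&\qquad + \sum_k\big(\omega_k+\tfrac{|v_k|^2}{4}\big)\psi_k\widetilde{\mathcal{R}} + i\sum_k v_k\,\psi_k\partial_x\widetilde{\mathcal{R}} + \tfrac{i}{2}\sum_k v_k\,\psi_k'\widetilde{\mathcal{R}},\ w\Big\rangle .
\end{align*}
Here I would invoke the exact elliptic identity $-\partial_x^2\widetilde{\mathcal{R}}_k - \gamma_k\delta\widetilde{\mathcal{R}}_k + (\omega_k+\tfrac{|v_k|^2}{4})\widetilde{\mathcal{R}}_k + iv_k\partial_x\widetilde{\mathcal{R}}_k = |\widetilde{\mathcal{R}}_k|^{p-1}\widetilde{\mathcal{R}}_k$ — a consequence of the profile equation for $Q_k$ and the Galilean ansatz, the Dirac term $\gamma_k\delta$ being $\gamma\delta_0$ for $k=k_0$ (since $x_{k_0}=v_{k_0}=0$ pins the $k_0$-th soliton to the origin) and zero for $k\neq k_0$ (since $\gamma_k=0$) — together with the localisation bounds $\psi_k\widetilde{\mathcal{R}}=\widetilde{\mathcal{R}}_k+O(e^{-\sigma t})$, $\psi_k\partial_x\widetilde{\mathcal{R}}=\partial_x\widetilde{\mathcal{R}}_k+O(e^{-\sigma t})$, $\psi_k'\widetilde{\mathcal{R}}=O(e^{-\sigma t})$ in $H^1$, valid because the distance from $\supp\psi_k'$ and from the other solitons to the centre of $\widetilde{\mathcal{R}}_k$ grows at rate $\ge 5\sqrt{c_0}$. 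Summing the elliptic identities over $k$ and absorbing the mismatches $\gamma\delta_0\sum_{k\neq k_0}\widetilde{\mathcal{R}}_k$ (here $|\widetilde{\mathcal{R}}_k(0)|\lesssim e^{-\sigma t}$ for $k\neq k_0$, since $|v_k|\ge 10\sqrt{c_0}$ keeps the $k$-th soliton at distance $\gtrsim \sqrt{c_0}\,t$ from the origin) and $|\widetilde{\mathcal{R}}|^{p-1}\widetilde{\mathcal{R}}-\sum_k|\widetilde{\mathcal{R}}_k|^{p-1}\widetilde{\mathcal{R}}_k$, the bracket above equals $O(e^{-\sigma t})$ in $H^{-1}$, so $|\langle\mathcal{G}'(\widetilde{\mathcal{R}}),w\rangle|\lesssim e^{-\sigma t}\|w\|_{H^1}=O(e^{-3c_0 t})$. (Equivalently one may keep the bracket, modulo $O(e^{-\sigma t})$, as a combination of the $i\widetilde{\mathcal{R}}_k$ and of the $D_x\widetilde{\mathcal{R}}_k$ with coefficient proportional to $v_k$, and then use the orthogonality relations $\Im\int\overline{w}\widetilde{\mathcal{R}}_k=0$ and $\Re\int\overline{w}D_x\widetilde{\mathcal{R}}_k=0$ for $k\neq k_0$; the non-orthogonal direction $D_x\widetilde{\mathcal{R}}_{k_0}$ enters only with the factor $v_{k_0}=0$, which is the point noted in Remark \ref{rmk3.1}.) Combining the three pieces with $w=u-\widetilde{\mathcal{R}}$ yields the claim.

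The step I expect to be the main obstacle is this last one, essentially bookkeeping rather than depth: one must identify precisely which interaction terms are exponentially small and verify that their common rate $\sigma$ genuinely beats $3c_0$, which rests on the choice $\sqrt{c_0}\le \tfrac1{10}\min\{\sqrt{\omega_k},\,v_{k+1}-v_k\}$ (so that, for $k\neq k_0$, the velocity gaps force $|v_k|\ge 10\sqrt{c_0}$ and even the delta–soliton interaction at the origin is $O(e^{-3c_0 t})$). The only place where the potential needs care is that $\widetilde{\mathcal{R}}$ is merely $H^1$ across the origin, so $-\partial_x^2\widetilde{\mathcal{R}}$ in the bracket above must be read as a distribution whose singular part $\gamma\widetilde{\mathcal{R}}_{k_0}(0)\delta_0$ exactly cancels the $k_0$-part of $-\gamma\delta_0\widetilde{\mathcal{R}}$, leaving only the exponentially small $-\gamma\sum_{k\neq k_0}\widetilde{\mathcal{R}}_k(0)\delta_0$; everything else is the standard soliton-interaction accounting of \cite{MaMe06,CMM11}.
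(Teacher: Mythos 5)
Your proposal is correct and takes essentially the same route as the paper: expand $\mathcal{G}$ to second order around $\widetilde{\mathcal{R}}$, cancel the linear term using the profile equations for the $\widetilde{\mathcal{R}}_k$ (with the delta appearing only in the $k_0$ equation, and with the $\partial_x\psi_k$, cross-soliton, and $\widetilde{\mathcal{R}}_k(0)$ terms for $k\neq k_0$ exponentially small), identify the quadratic part with $\tfrac12\mathcal{H}_\gamma(w)$ up to exponentially small soliton--soliton interactions, and bound the remainder by $\|w\|_{H^1}^3=o(\|w\|_{H^1}^2)$, which is exactly the paper's argument. The only nit is that your parenthetical fallback via the orthogonality relations is unnecessary (and not quite accurate as phrased), since once the elliptic identities are applied the linear term is already $O(e^{-\sigma t})$ in $H^{-1}$ with $\sigma\gg 3c_0$, no orthogonality needed.
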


\begin{proof}
By simple calculations, we get
\begin{align*}
	E_{\gamma}(u)&= E_{\gamma}(\widetilde{\mathcal{R}})
	 +\Re \int \partial_{x} \widetilde{\mathcal{R}} \overline{\partial_{x}w}dx
	- \gamma \Re \widetilde{\mathcal{R}}(0)\overline{w(0)}
	-\Re \int |\widetilde{\mathcal{R}}|^{p-1}\widetilde{\mathcal{R}}\overline{w}  dx
	\\
	&\quad  +\frac{1}{2} \|\partial_{x}w\|_{L^{2}}^{2}
	- \frac{\gamma}{2} |w(0)|^{2}
	-\int \left[ \frac{p-1}{2}|\widetilde{\mathcal{R}}|^{p-3}\{ \Re(\widetilde{\mathcal{R}}\overline{w})\}^{2} +\frac{1}{2}|\widetilde{\mathcal{R}}|^{p-1}|w|^{2}   \right]dx
	\\
	&\quad +o(\|w\|_{L^{2}}^{2})
\end{align*}
Moreover, we also have
\begin{align*}
	\mathcal{M}_{k}(u)
	= \mathcal{M}_{k}(\widetilde{\mathcal{R}}) + \mathcal{M}_{k}(w)
	+2\Re \int \psi_{k} \widetilde{\mathcal{R}}\overline{w} dx
\end{align*}
and
\begin{align*}
	\mathcal{P}_{k}(u)
	= \mathcal{P}_{k}(\widetilde{\mathcal{R}})+\mathcal{P}_{k}(w)
	+2 \Im \int \psi_{k}\partial_{x}\widetilde{\mathcal{R}}\overline{w}  dx
	+ \Im \int  \partial_{x}\psi_{k} \widetilde{\mathcal{R}}\overline{w} dx. 
\end{align*}
The last term is estimated as follows: 
\begin{align*}
	\left|\Im \int  \partial_{x}\psi_{k} \widetilde{\mathcal{R}}\overline{w} dx\right|
	\leq \frac{C}{L} \int_{\Omega_{k}\cup \Omega_{k+1}} |\widetilde{\mathcal{R}}| |w| dx
	\leq \frac{C}{L} e^{-2c_{0}t} \|w\|_{L^{2}},
\end{align*}
where $\Omega_k:= [\sigma_k t -L, \sigma_k +L]$. 
Since we have $\widetilde{\mathcal{R}}=\sum_{k=1}^{K}\widetilde{\mathcal{R}}_{k}$ and $\widetilde{\mathcal{R}}_{k}$ ($k\neq k_0$) satisfies the equation
\begin{align*}
	-\partial_{x}^{2}\widetilde{\mathcal{R}}_{k}
	+\left( \omega_{k} + \frac{|v_{k}|^{2}}{4} \right)\widetilde{\mathcal{R}}_{k}
	+iv_{k}\partial_{x}\widetilde{\mathcal{R}}_{k}
	-|\widetilde{\mathcal{R}}_{k}|^{p-1}\widetilde{\mathcal{R}}_{k}
	=0,
\end{align*}
and $\widetilde{\mathcal{R}}_{k_0}$ satisfies the equation
\begin{align*}
	-\partial_{x}^{2} \widetilde{\mathcal{R}}_{k_{0}} -\gamma \delta_{0} \widetilde{\mathcal{R}}_{k_{0}}
	+\omega_{k_{0}} \widetilde{\mathcal{R}}_{k_{0}}
	 -|\widetilde{\mathcal{R}}_{k_{0}}|^{p-1}\widetilde{\mathcal{R}}_{k_{0}}
	 =0,
\end{align*}
and $|\widetilde{\mathcal{R}}_k(0)\overline{w}(0)|\lesssim e^{-2c_0t} \|w\|_{H^1}$ for $k \neq k_0$, we obtain the statement. 
\end{proof}

By the coercivity of the linearized operator, we obtain the following coercivity for $\mathcal{H}_{\gamma}$. 

\begin{lemma}
\label{lem3.12}
Let $L$ and $T_0$ be as in Lemma \ref{lem3.9}. 
We have
\begin{align*}
	c\|w\|_{H^{1}}^{2} \leq \mathcal{H}_{\gamma}(w) +O(e^{-3c_0t})
\end{align*}
for $t\in [T(\bm{\mathfrak{l}}^{+}),T_n]$. 
\end{lemma}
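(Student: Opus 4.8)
\emph{The plan} is to localize $w$ around each soliton via the partition of unity $\{\psi_k\}_{k=1}^K$ (recall $\sum_k\psi_k\equiv 1$ from Section~\ref{sec3.3}), to undo the Galilean boost on each localized piece so as to reduce to the unboosted Hessians $B_{\omega_k,\gamma_k}$, to apply the coercivity Propositions~\ref{xP2.4}--\ref{P2.11} (for $k=k_0$, where $\gamma_{k_0}=\gamma<0$) and~\ref{P2.10} (for $k\neq k_0$, where $\gamma_k=0$), to sum over $k$, and finally to upgrade the resulting $L^2$-coercivity to $H^1$ by energy domination. This follows the scheme of \cite{MaMe06,CMM11}; the genuinely new feature is that the piece attached to $k_0$ carries \emph{two} unstable directions, the even $Y_{k_0}^\pm$ and the odd $Z_{k_0}^\pm$, and the delta term $-\gamma|w(0)|^2$ must be correctly attributed to it.

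\emph{Decomposition and transformation.} Set $w_k:=\psi_k^{1/2}w$. Using $\sum_k\psi_k\equiv 1$, the identity $\sum_k\psi_k^{1/2}\rd_x\psi_k^{1/2}=\tfrac12\rd_x\sum_k\psi_k=0$, and the bound $(\rd_x\psi_k)^2\le CL^{-2}\psi_k$ (which, via $\psi(x)+\psi(-x)=1$ and $(\psi')^2\le C\psi$, also gives $|\rd_x\psi_k^{1/2}|\le CL^{-1}$), one checks that $\mathcal{M}_k(w)=\nor{w_k}{L^2}^2$ and $\mathcal{P}_k(w)=\Im\int\ovl{w_k}\rd_xw_k\,dx$ exactly, while $\nor{\rd_xw}{L^2}^2=\sum_k\nor{\rd_xw_k}{L^2}^2+O(L^{-2}\nor{w}{L^2}^2)$. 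Since $|\widetilde{\mathcal{R}}_k|=Q_k(\widetilde{X}_k)$ is concentrated at the soliton center $v_kt+x_k+y_k(t)$, which for $t\ge T_0$ lies well inside $\{\psi_k=1\}$ at distance $\gtrsim t$ from $\supp\rd_x\psi_k$, replacing each $\int_\R|\widetilde{\mathcal{R}}_k|^{p-1}(\cdots)\,dx$ by its $\psi_k$-weighted version costs only $O(e^{-c_1t})$ for some $c_1>3c_0$ (the margin $c_1>3c_0$ is forced by the factor $\tfrac1{10}$ in the definition of $\sqrt{c_0}$). Also, for $t\ge T_0$ the origin lies in $\{\psi_{k_0}\equiv 1\}\cap\bigcap_{k\neq k_0}\{\psi_k\equiv 0\}$, so $-\gamma|w(0)|^2=-\gamma|w_{k_0}(0)|^2$. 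Hence $\mathcal{H}_\gamma(w)=\sum_k\mathcal{Q}_k[w_k]+O(L^{-2}\nor{w}{L^2}^2)+O(e^{-c_1t})$ with $\mathcal{Q}_k$ the $\psi_k$-localized quadratic form. Now set $\eta_k(t,y):=w_k(t,y+v_kt+x_k+y_k(t))\,e^{-i\widetilde{\Theta}_k}$ (with $y_{k_0}\equiv 0$); a direct computation shows that the boost terms $-v_k\mathcal{P}_k$ and $\tfrac14|v_k|^2\mathcal{M}_k$ exactly absorb the cross terms produced by conjugating $\rd_x$ with $e^{iv_kx/2}$, so $\mathcal{Q}_k[w_k]=B_{\omega_k,\gamma_k}(\eta_k,\eta_k)$ (the origin being fixed by the $k_0$-shift turns the delta term of $\mathcal{Q}_{k_0}$ into $-\gamma|\eta_{k_0}(0)|^2$).

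\emph{Orthogonality and $L^2$-coercivity.} Transforming and localizing the modulation conditions $\Im\int\ovl{w}\widetilde{\mathcal{R}}_k\,dx=0$ and $\Re\int\ovl{w}D_x\widetilde{\mathcal{R}}_k\,dx=0$ ($k\neq k_0$) gives $|\inp{\eta_k}{iQ_k}|\le Ce^{-c_1t}$ and $|\inp{\eta_k}{\rd_xQ_k}|\le Ce^{-c_1t}$ for $k\neq k_0$; unwinding the definitions of $a_k^\pm$ and $b^\pm$ shows that $\inp{\eta_k}{iY_k^+},\inp{\eta_k}{iY_k^-}$ (resp. $\inp{\eta_{k_0}}{iZ_{k_0}^+},\inp{\eta_{k_0}}{iZ_{k_0}^-}$) are, up to $O(e^{-c_1t})$, real-linear combinations of $a_k^\pm$ (resp. $b^\pm$), which satisfy $|a_k^\pm|,|b^\pm|\le e^{-\frac32c_0t}$ on $[T(\bm{\mathfrak{l}}^{+}),T_n]$. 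Propositions~\ref{P2.10} and~\ref{P2.11} then yield $\mathcal{Q}_k[w_k]=B_{\omega_k,\gamma_k}(\eta_k,\eta_k)\ge c\nor{\eta_k}{H^1}^2-Ce^{-3c_0t}\ge c\,\mathcal{M}_k(w)-Ce^{-3c_0t}$. Summing over $k$ and using $\sum_k\mathcal{M}_k(w)=\nor{w}{L^2}^2$, $\nor{w}{H^1}\le e^{-c_0t}$, and $c_1>3c_0$, we obtain $\mathcal{H}_\gamma(w)\ge(c-CL^{-2})\nor{w}{L^2}^2-Ce^{-3c_0t}$, and fixing $L$ large (consistently with Lemma~\ref{lem3.9}) absorbs the $L^{-2}$ term.

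\emph{Upgrade to $H^1$, and the main obstacle.} From the definition of $\mathcal{H}_\gamma$ one has $\nor{\rd_xw}{L^2}^2=\mathcal{H}_\gamma(w)+\gamma|w(0)|^2+\sum_k\int(\cdots)\,dx-\sum_k\big[(\omega_k+\tfrac14|v_k|^2)\mathcal{M}_k(w)-v_k\mathcal{P}_k(w)\big]$, and every term on the right except $\mathcal{H}_\gamma(w)$ is bounded by $\tfrac12\nor{\rd_xw}{L^2}^2+C\nor{w}{L^2}^2$ (using $\nor{w}{L^\infty}^2\lesssim\nor{w}{L^2}\nor{\rd_xw}{L^2}+\nor{w}{L^2}^2$ for the delta term and Cauchy--Schwarz for $\mathcal{P}_k$), so $\nor{\rd_xw}{L^2}^2\lesssim\mathcal{H}_\gamma(w)+\nor{w}{L^2}^2$; combined with the $L^2$-coercivity this gives $c\nor{w}{H^1}^2\le\mathcal{H}_\gamma(w)+Ce^{-3c_0t}$. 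I expect the main obstacle to be exactly this $H^1$ step, which is why a two-stage argument ($L^2$-coercivity, then energy domination) is needed rather than summing $\sum_k\nor{\eta_k}{H^1}^2$ directly: a boosted profile has $\nor{\rd_xw_k}{L^2}\sim|v_k|\nor{w_k}{L^2}$ while the de-boosted $\eta_k$ can be nearly constant, so $\sum_k\nor{\eta_k}{H^1}^2$ fails to control $\nor{\rd_xw}{L^2}^2$ and the gradient bound must be recovered afterwards from $\mathcal{H}_\gamma$ itself. The remaining work is the bookkeeping of the soliton-interaction errors, all of which decay at a rate strictly larger than $3c_0$.
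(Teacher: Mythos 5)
Your proposal is correct, and its core is the same as the paper's: decompose $\mathcal{H}_\gamma$ with the partition $\{\psi_k\}$, pass to $\sqrt{\psi_k}\,w$, undo the Galilean boost and phase so that the boost terms $(\omega_k+\tfrac14|v_k|^2)\mathcal{M}_k - v_k\mathcal{P}_k$ cancel the cross terms exactly, attribute the delta term to the $k_0$-piece, invoke Propositions \ref{P2.10} and \ref{P2.11}, and dispose of the orthogonality terms via the modulation conditions and the bootstrap bounds $|a_k^\pm|,|b^\pm|\le e^{-\frac32 c_0 t}$. Where you deviate is the final step: the paper keeps the full output $c\|F_k\|_{H^1}^2$ of the coercivity propositions and simply transforms back, using $\|F_k\|_{H^1}^2 \ge c\int \psi_k(|\partial_x w|^2+|w|^2)\,dx - CL^{-2}\|w\|_{L^2}^2$ and $\sum_k\psi_k=1$ to land directly on $H^1$-coercivity, whereas you discard the gradient part, obtain only $L^2$-coercivity, and then recover $\|\partial_x w\|_{L^2}^2$ from the structure of $\mathcal{H}_\gamma$ itself (Cauchy--Schwarz on the momentum terms, boundedness of the potential terms, Gagliardo--Nirenberg for the delta term). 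Your two-stage variant is valid, but the justification you give for it is not: since the velocities $v_k$ are fixed, $\|\partial_x w_k\|_{L^2} \le \|\partial_x \eta_k\|_{L^2} + \tfrac{|v_k|}{2}\|\eta_k\|_{L^2}$, so $\sum_k\|\eta_k\|_{H^1}^2$ does control $\|\partial_x w\|_{L^2}^2$ up to $O(L^{-1}\|w\|_{H^1}^2)$ (with constants depending only on the fixed $v_k$), and no uniformity in the velocities is needed here; the ``main obstacle'' you identify is therefore not an obstacle, and the paper's one-step route is the shorter one. This misconception does not affect the correctness of your argument, only its economy.
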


\begin{proof}
Since $\sum_k \psi_k =1$, we get 
\begin{align*}
	\mathcal{H}_{\gamma}(w)
	= \sum_{k=1}^{K} \mathcal{I}_k + O(e^{-3c_{0}t}\|w\|_{H^{1}}^{2}),
\end{align*}
where
\begin{align*}
	\mathcal{I}_{k} &:=\int_{\mathbb{R}} \psi_{k} |\partial_{x}w|^{2}dx  - \gamma_{k} |w(0)|^{2}
	\\
	&\quad -  \int_{\mathbb{R}} \psi_{k}\left[ |\widetilde{\mathcal{R}}_{k}|^{p-1}|w|^{2} + (p-1) |\widetilde{\mathcal{R}}_{k}|^{p-3}\{\Re (\overline{\widetilde{\mathcal{R}}_{k}}w\}^{2}\right] dx
	\\
	&\quad  + \left\{ \left( \omega_{k} + \frac{|v_{k}|^{2}}{4} \right)  \int_{\mathbb{R}} \psi_{k} |w|^{2} dx 
	- v_{k} \Im \int_{\mathbb{R}}  \psi_{k}\overline{w} \partial_{x}w dx   \right\}.
\end{align*}
Since we have $\sqrt{\psi_{k}} \partial_{x}w=\partial_{x} (\sqrt{\psi_{k}}w)-\frac{1}{2} \frac{\partial_{x}\psi_{k}}{\sqrt{\psi_{k}}}w$
and $|\partial_{x}\psi_{k}/\sqrt{\psi_{k}}|\leq C/L$ from the definition of $\psi$, we get 
\begin{align*}
	\mathcal{I}_{k}
	\geq \mathcal{J}_{k} -\frac{C}{L} \|w\|_{H^{1}}^{2},
\end{align*}
where we set
\begin{align*}
	\mathcal{J}_{k}&:=\int_{\mathbb{R}}  |\partial_{x}(\sqrt{\psi_{k}}w(t,x))|^{2}dx  - \gamma_{k} |\sqrt{\psi_{k}(0)}w(0)|^{2}
	\\
	&\quad -  \int_{\mathbb{R}} \left[ |\widetilde{\mathcal{R}}_{k}|^{p-1}|\sqrt{\psi_{k}}w|^{2} + (p-1) |\widetilde{\mathcal{R}}_{k}|^{p-3}\{\Re (\overline{\widetilde{\mathcal{R}}_{k}}\sqrt{\psi_{k}}w)\}^{2}\right] dx
	\\
	& \quad + \left\{ \left( \omega_{k} + \frac{|v_{k}|^{2}}{4} \right)  \int_{\mathbb{R}} |\sqrt{\psi_{k}}w|^{2} dx 
	- v_{k} \Im \int_{\mathbb{R}}  \overline{\sqrt{\psi_{k}(t,x)}w(t,x)} \partial_{x}(\sqrt{\psi_{k}}w) dx   \right\}.
\end{align*}
Setting $F_k(t,x):= e^{-i\widetilde{\Theta}_k(t,\chi_k(t,x))}\sqrt{\psi_k(t,\chi_k(t,x))}w(t,\chi_k(t,x))$, where $\chi_k(t,x):=x+v_k t+x_k +y_k(t)$, we get
\begin{align*}
	\mathcal{J}_{k} 
	&= \int_{\mathbb{R}}  |\partial_{x}F_k(t,x)|^{2}dx 
	 - \gamma_{k} |F_k(0)|^{2} +\omega_{k}\int_{\mathbb{R}} |F_k|^{2} dx 
	\\
	&\quad -  \int_{\mathbb{R}} \left[ |Q_{k}(x)|^{p-1}|F_k|^{2} + (p-1) |Q_{k}(x)|^{p-3}\{\Re (Q_{k}(x)F_k)\}^{2}\right] dx
	\\
	&=:B_{k}[F_k]
\end{align*}
By the coercivity results, i.e., Propositions \ref{P2.11} and \ref{P2.10}, we obtain 
\begin{align*}
	c\|F_k\|_{H^1}^2 \leq  B_{k}[F_k] +c^{-1} \text{Ortho}_k,
\end{align*}
where
\begin{align*}
	 \text{Ortho}_{k_0} &= \langle iQ_{k_0} , F_{k_0} \rangle_{L^2}^2
	 +\sum_{\pm} \langle iY_{k_0}^\pm , F_{k_0} \rangle_{L^2}^2
	  +\sum_{\pm} \langle iZ_{k_0}^\pm , F_{k_0} \rangle_{L^2}^2
\end{align*}
and 
\begin{align*}
	 \text{Ortho}_{k} &=
	 \langle \partial_x Q_{k}, F_k \rangle_{L^2}^2
	 +\langle iQ_{k}, F_k \rangle_{L^2}^2
	  +\sum_{\pm} \langle iY_{k}^\pm , F_{k} \rangle_{L^2}^2
\end{align*}
for $k\neq k_0$. By the inverse transformation, i.e., transformation from $F_k$ into $\sqrt{\psi_k}w$, we get 
\begin{align*}
	\|F_k\|_{H^1}^2 \geq c\int \psi_{k} (|\partial_{x}w|^{2}+|w|^{2}) dx - \frac{C}{L^{2}} \int |w|^{2}dx.
\end{align*}
On the other hand, by the  inverse transformation and the orthogonality condition of $w$, we have
\begin{align*}
	\Im  \int  Q_{k} F_k dx 
	=- \Im  \int  \widetilde{\mathcal{R}}_k \sqrt{\psi_k} \overline{w} dx 
	=O(e^{-c_0t}\|w\|_{H^1}).
\end{align*}
By the similar calculation, we also get
\begin{align*}
	\Re  \int \partial_x Q_{k} F_k dx = O(e^{-c_0t}\|w\|_{H^1}).
\end{align*}
Moreover, we have
\begin{align*}
	 \Im \int Y_{k}^\pm \overline{F_{k}} dx
	 = \Im \int \widetilde{\mathcal{Y}}_{k}^\pm \sqrt{\psi_k} \overline{w} dx
	 = a_k^\mp (t) + O(e^{-c_0t}\|w\|_{H^1})
\end{align*}
for any $k$ and 
\begin{align*}
	 \Im \int Z_{k_0}^\pm \overline{F_{k}} dx
	 = b^\mp (t) + O(e^{-c_0t}\|w\|_{H^1}). 
\end{align*}
Combining the above calculations (and recalling $\sum_{k=1}^{K} \psi_{k}=1$), we get
\begin{align*}
	C \mathcal{H}_\gamma(w) 
	\geq \|w\|_{H^1}^2
	 -  \sum_{k\in \llbracket 1,K\rrbracket,\pm} (a_k^\pm (t) )^2 - \sum_{\pm}( b^\pm (t) )^2 
	- \frac{1}{L} \|w\|_{H^1}^2 - O(e^{-2c_0t} \|w\|_{H^1}^2)
\end{align*}
Taking $L$ sufficiently large and large $T_0$ (independent of $n$), 
we obtain the statement since $(a_k^\pm (t) )^2 + ( b^\pm (t) )^2 \leq C e^{-3c_0t}$ for $t \in [T(\bm{\mathfrak{l}}^{+}),T_n]$ by the definition of $T(\bm{\mathfrak{l}}^{+})$. 
\end{proof}

\begin{corollary}\label{cor3.12.0}
Let $L$ and $T_0$ be as in Lemma \ref{lem3.9}. 
For $t \in [T(\bm{\mathfrak{l}}^{+}),T_n]$, we have
\begin{align*}
	c\|w\|_{H^{1}}^{2} \leq \frac{C}{L} \int_{t}^{T_{n}} \|w(s)\|_{H^{1}}^{2}ds + C e^{-3c_{0}t}.
\end{align*}
\end{corollary}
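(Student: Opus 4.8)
The plan is to bound $\|w(t)\|_{H^1}^2$ by combining the coercivity of $\mathcal{H}_{\gamma}$ (Lemma~\ref{lem3.12}), the identity relating $\mathcal{H}_{\gamma}(w)$ to the boosted energy $\mathcal{G}$ (Lemma~\ref{lem3.11}), and the almost-conservation of $\mathcal{G}$ along the flow (Corollary~\ref{cor3.10}), integrating the latter backwards from $t=T_n$, where $w$ is already of size $\lesssim e^{-\frac32 c_0 T_n}$. The only preliminary ingredient not yet recorded is the almost-constancy of $\mathcal{G}$ on the modulated profile.

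First I would establish the auxiliary claim: there is a constant $c_*$, independent of $n$ and $t$, such that
\[
  \left| \mathcal{G}(\widetilde{\mathcal{R}}(t)) - c_* \right| \le C e^{-3c_0 t}, \qquad t \in [T(\bm{\mathfrak{l}}^{+}), T_n].
\]
This is proved by expanding $\widetilde{\mathcal{R}} = \sum_k \widetilde{\mathcal{R}}_k$ in each of $E_{\gamma}$, $\mathcal{M}_k$ and $\mathcal{P}_k$: the interaction terms between distinct solitons, together with the contributions of the regions where the cutoff $\psi_k$ is not yet $1$ on the bulk of $\widetilde{\mathcal{R}}_k$ (nor $0$ on the $\widetilde{\mathcal{R}}_j$, $j\ne k$), are all $O(e^{-3c_0 t})$, since the soliton centres separate at linear speed $\gtrsim (v_k-v_{k-1})$, since $Q_k$ decays like $e^{-\sqrt{\omega_k}|x|}$, and since the conditions $10\sqrt{c_0}\le \sqrt{\omega_k}$ and $10\sqrt{c_0}\le v_k-v_{k-1}$ built into $c_0$ leave ample slack. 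The remaining diagonal, uncut contribution of each $\widetilde{\mathcal{R}}_k$ is translation- and phase-invariant (for $k=k_0$ one uses $\widetilde{\mathcal{R}}_{k_0}(t,0)=Q_{k_0}(0)$), hence independent of the modulation parameters $y_k(t),\mu_k(t)$ and of $t$; summing these yields $c_*$. These are estimates of the same nature as those behind Lemmas~\ref{lem3.9} and \ref{lem3.11}, so I would only sketch them.

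With the claim in hand, Lemmas~\ref{lem3.11} and \ref{lem3.12} give, for $t\in[T(\bm{\mathfrak{l}}^{+}),T_n]$,
\[
  c \|w(t)\|_{H^1}^2 \le \mathcal{H}_{\gamma}(w(t)) + C e^{-3c_0 t} = 2\bigl( \mathcal{G}(u(t)) - \mathcal{G}(\widetilde{\mathcal{R}}(t)) \bigr) + C e^{-3c_0 t} + o(\|w(t)\|_{H^1}^2);
\]
since $\|w(t)\|_{H^1}\le e^{-c_0 t}\le e^{-c_0 T_0}$ on this interval, the term $o(\|w(t)\|_{H^1}^2)$ is absorbed into the left-hand side after enlarging $T_0$. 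By Corollary~\ref{cor3.10},
\[
  \left| \mathcal{G}(u(t)) - \mathcal{G}(u(T_n)) \right| \le \int_t^{T_n} \left| \frac{d}{ds} \mathcal{G}(u(s)) \right| ds \le \frac{C}{L} \int_t^{T_n} \|w(s)\|_{H^1}^2\, ds + C e^{-3c_0 t},
\]
while at $t=T_n$, using $\|w(T_n)\|_{H^1}\le C|\bm{\mathfrak{l}}^{+}|\le Ce^{-\frac32 c_0 T_n}$ (hence $\mathcal{H}_{\gamma}(w(T_n))=O(e^{-3c_0 T_n})$ by boundedness of the quadratic form), Lemma~\ref{lem3.11} gives $\mathcal{G}(u(T_n))=\mathcal{G}(\widetilde{\mathcal{R}}(T_n))+O(e^{-3c_0 T_n})=c_*+O(e^{-3c_0 T_n})$. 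Writing
\[
  \mathcal{G}(u(t)) - \mathcal{G}(\widetilde{\mathcal{R}}(t)) = \bigl( \mathcal{G}(u(t)) - \mathcal{G}(u(T_n)) \bigr) + \bigl( \mathcal{G}(u(T_n)) - c_* \bigr) + \bigl( c_* - \mathcal{G}(\widetilde{\mathcal{R}}(t)) \bigr)
\]
and inserting the three bounds (the last via the claim) gives $\mathcal{G}(u(t)) - \mathcal{G}(\widetilde{\mathcal{R}}(t)) \le \frac{C}{L} \int_t^{T_n} \|w(s)\|_{H^1}^2\, ds + C e^{-3c_0 t}$. Substituting into the coercivity estimate and renaming constants yields the corollary.

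The main obstacle is the auxiliary claim on the almost-constancy of $\mathcal{G}(\widetilde{\mathcal{R}}(t))$; once it, Lemmas~\ref{lem3.11}, \ref{lem3.12} and Corollary~\ref{cor3.10} are available, what remains is a short manipulation of inequalities. That obstacle is, however, mild: it rests only on the exponential separation of the solitons and the slack that the factor $1/10$ in the definition of $c_0$ was introduced to provide.
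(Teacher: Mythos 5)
Your proposal is correct and follows essentially the same route as the paper: coercivity of $\mathcal{H}_\gamma$ (Lemma \ref{lem3.12}), the expansion of Lemma \ref{lem3.11}, the almost-conservation of $\mathcal{G}$ from Corollary \ref{cor3.10} integrated back from $T_n$, and the almost-constancy of $\mathcal{G}(\widetilde{\mathcal{R}}(t))$ (your constant $c_*$ is the paper's $\sum_k\{E_{\gamma_k}(Q_k)+\tfrac{\omega_k}{2}M(Q_k)\}$). The only cosmetic difference is that you evaluate $\mathcal{G}(u(T_n))$ by applying Lemma \ref{lem3.11} at $t=T_n$ together with $\|w(T_n)\|_{H^1}\lesssim e^{-\frac32 c_0T_n}$, whereas the paper uses the explicit final data $u(T_n)=\mathcal{R}(T_n)+i\sum\alpha\mathcal{Y}+i\sum\beta\mathcal{Z}$; both yield the same $O(e^{-3c_0 t})$ error.
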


\begin{proof}
By Lemmas \ref{lem3.12} and \ref{lem3.11}, we get
\begin{align*}
	c\|w\|_{H^{1}}^{2} &\leq  \frac{1}{2}\mathcal{H}_{\gamma}(w) + O(e^{-3c_0t})
	\\
	&=\mathcal{G}(u) - \mathcal{G}(\widetilde{\mathcal{R}}) +O(e^{-3c_{0}t})
	+o(\|w\|_{H^1}^{2}).
\end{align*}
Now, by Corollary \ref{cor3.10}, we have
\begin{align*}
	|\mathcal{G}(u(t)) - \mathcal{G}(\widetilde{\mathcal{R}}(t))|
	&\leq 
	|\mathcal{G}(u(t)) -\mathcal{G}(u(T_{n}))| + |\mathcal{G}(u(T_n)) -\mathcal{G}(\widetilde{\mathcal{R}}(t))|
	\\
	&\leq \frac{C}{L} \int_{t}^{T_{n}} \|w(s)\|_{H^{1}}^{2}ds + C e^{-3c_{0}t} + |\mathcal{G}(u(T_n)) -\mathcal{G}(\widetilde{\mathcal{R}}(t))|.
\end{align*}
Since $u(T_n)= \mathcal{R}(T_{n}) +  i \sum_{k\in \llbracket 1,K \rrbracket, \pm} \alpha_{k,n}^{\pm} \mathcal{Y}_{k}^{\pm}(T_{n}) + i \sum_{\pm} \beta_{n}^{\pm} \mathcal{Z}_{k_{0}}^{\pm}(T_{n})$, we get
\begin{align*}
	\mathcal{G}(u(T_n)) 
	= \mathcal{G}(\mathcal{R}(T_{n})) + O(e^{-3c_0t}).
\end{align*}
We have
\begin{align*}
	\mathcal{G}(\mathcal{R}(t))= \mathcal{G}(\widetilde{\mathcal{R}}(t))
	&= \sum_{k=1}^{K}\left\{ E_{\gamma_k}(Q_{k}) +\frac{\omega_k}{2}M(Q_{k}) \right\} +O( e^{-3c_{0}t})
\end{align*}
for any $t \in [T(\bm{\mathfrak{l}}^{+},T_0)]$. Therefore, we get
\begin{align*}
	|\mathcal{G}(u(T_n)) -\mathcal{G}(\widetilde{\mathcal{R}}(t))|
	= O(e^{-3c_0t}). 
\end{align*}
This gives  the statement. 
\end{proof}

We are now ready to prove Lemma \ref{lem2.9}. 

\begin{proof}[Proof of Lemma \ref{lem2.9}]
Taking large $L>0$ and $T_{0}>0$, we get
\begin{align*}
	\|w\|_{H^{1}}^{2}
	&\leq \frac{C}{L} \int_{t}^{T_{n}} e^{-2c_{0}s}ds + C e^{-3c_{0}t}
	\\
	&\leq  \frac{C}{L} e^{-2c_{0}t} + C e^{-3c_{0}t}
	\\
	&\leq  \frac{1}{4} e^{-2c_{0}t}
\end{align*}
by Corollary \ref{cor3.12.0} and the boot strap assumption. 
Thus we get
\begin{align*}
	\|w\|_{H^{1}} \leq \frac{1}{2}e^{-c_{0}t}. 
\end{align*}
Moreover, we have
\begin{align*}
	|\bm{y}(t)|
	&\leq |\bm{y}(T_{n})-\bm{y}(t)|+|\bm{y}(T_{n})|
	\\
	&\leq \frac{C}{L} \int_{t}^{T_{n}} e^{-c_{0}s}ds + C e^{-2c_{0}t}
	+ |\bm{y}(T_{n})|
	\\
	&\leq \frac{C}{L}e^{-c_{0}t} + C e^{-2c_{0}t}
	+ |\bm{y}(T_{n})|.
\end{align*}
Now, $|\bm{y}(T_{n})| \leq C|\bm{\mathfrak{l}}^{+}|\leq C e^{-\frac{3}{2}c_0t}$. Thus we get
\begin{align*}
	|\bm{y}(t)| \leq \frac{C}{L}e^{-c_{0}t} + C e^{-2c_{0}t}+C e^{-\frac{3}{2}c_0t}
	\leq \frac{1}{2} e^{-c_{0}t}
\end{align*}
for large $L$ and $T_0$. We can get the estimate for $\bm{\mu}$ in the same way. Moreover, 
\begin{align*}
	\|u(t)-\mathcal{R}(t)\|_{H^1} 
	&\leq \|\widetilde{\mathcal{R}}(t)-\mathcal{R}(t)\|_{H^1} +\|w(t)\|_{H^1}
	\\
	&\leq C|\bm{y}(t)| +C |\bm{\mu}(t)| + \|w(t)\|_{H^1} +O(e^{-2c_0t})
	\\
	&\leq Ce^{-c_0t} \leq \varepsilon_0/2,
\end{align*} 
where we used $Q_k \in H^2(\mathbb{R})$ for $k \neq k_0$ (indeed, $\|f(\cdot -y)-f\|_{H^1} \leq |y|\|f\|_{H^2}$).  We note that we do not use any such smoothness for $k =k_0$ (indeed $Q_{k_0} \not\in H^2(\mathbb{R})$) since it does not have any translation parameter, i.e., $\widetilde{X}_{k_0} =X_{k_0}=x$ and $\widetilde{\Theta}_{k_0}-\mu_{k_0}(t)=\Theta_{k_0}=\omega_{k_0}t +\theta_{k_0}$.
\end{proof}

We can control $\bm{l}^{-}(t)$ as follows. 

\begin{lemma}[Control of $\bm{l}^{-}(t)$]
\label{lem3.14}
For large $T_{0}$ (independent of $n$) and for all $\bm{\mathfrak{l}}^{+}\in B_{\mathbb{R}^{K+1}}(e^{-\frac{3}{2}c_{0}T_{n}})$, $e^{\frac{3}{2}c_{0}t}\bm{l}^{-}(t) \in B_{\mathbb{R}^{K+1}}(1/2)$ holds for all $t \in [T(\bm{\mathfrak{l}}^{+}),T_{n}]$. 
\end{lemma}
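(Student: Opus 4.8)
The plan is to exploit the terminal condition $\bm{l}^-(T_n) = \bm{0}$ together with the differential inequalities of Lemma~\ref{lem2.8} for the unstable directions. Write $\bm{l}^-(t) = (\boa^-(t), b^-(t)) = \left((a_k^-(t))_{k=1}^K, b^-(t)\right)$. By Lemma~\ref{lem2.8}, on $[T(\bm{\mathfrak{l}}^{+}), T_n]$ each component satisfies
\begin{align*}
	\left| \frac{d a_k^-}{dt} - \mathfrak{y}_k a_k^- \right| \le C \|w(t)\|_{H^1}^2 + C e^{-3c_0 t}, \qquad
	\left| \frac{d b^-}{dt} - \mathfrak{z}_{k_0} b^- \right| \le C \|w(t)\|_{H^1}^2 + C e^{-3c_0 t},
\end{align*}
with $\mathfrak{y}_k, \mathfrak{z}_{k_0} > 0$ by Proposition~\ref{xP2.1}. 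Since Lemma~\ref{lem2.9} has already been established, on the same interval we have $\|w(t)\|_{H^1} \le \tfrac12 e^{-c_0 t}$, so the right-hand sides above are bounded by $C e^{-2c_0 t}$.

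Next I would integrate backward from $T_n$. For $a_k^-$, multiply the inequality by the integrating factor $e^{-\mathfrak{y}_k t}$, integrate over $[t, T_n]$, and use $a_k^-(T_n) = 0$ to obtain
\begin{align*}
	|a_k^-(t)| \le e^{\mathfrak{y}_k t} \int_t^{T_n} e^{-\mathfrak{y}_k s} \left( C\|w(s)\|_{H^1}^2 + C e^{-3c_0 s} \right) ds
	\le C e^{\mathfrak{y}_k t} \int_t^{T_n} e^{-(\mathfrak{y}_k + 2c_0) s}\, ds \le \frac{C}{\mathfrak{y}_k + 2c_0}\, e^{-2c_0 t}.
\end{align*}
The identical computation with the factor $e^{-\mathfrak{z}_{k_0} t}$ gives $|b^-(t)| \le C e^{-2c_0 t}$. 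Summing over the $K+1$ components, $e^{\frac32 c_0 t}|\bm{l}^-(t)| \le C e^{-\frac12 c_0 t} \le C e^{-\frac12 c_0 T_0}$, and choosing $T_0$ large (independently of $n$) makes the right-hand side $\le 1/2$; hence $e^{\frac32 c_0 t}\bm{l}^-(t) \in B_{\mathbb{R}^{K+1}}(1/2)$ on $[T(\bm{\mathfrak{l}}^{+}), T_n]$, as required.

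There is no genuine obstacle here. The only point needing attention is the bookkeeping of constants so that $T_0$ can be chosen independently of $n$, which is immediate since Lemma~\ref{lem2.8} and Lemma~\ref{lem2.9} both provide $n$-independent constants and all the bootstrap bounds are in force on $[T(\bm{\mathfrak{l}}^{+}), T_n]$. Conceptually, the reason $\bm{l}^-$ is controlled "for free" — unlike $\bm{l}^+$, whose smallness at $T_0$ must be arranged by the topological (Brouwer-type) argument in the proof of Proposition~\ref{ube} — is that $\bm{l}^-$ labels the direction that is unstable in \emph{forward} time: solving the associated approximate linear ODE backward from the terminal time $T_n$, where it vanishes, eliminates the growing homogeneous mode, leaving only a Duhamel integral of the exponentially small source against a contracting exponential kernel.
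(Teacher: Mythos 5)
Your proof is correct and is essentially the standard argument of \cite[Lemma 5]{CMM11} that the paper invokes and omits: since $\bm{l}^-(T_n)=\bm{0}$ by Proposition \ref{P3.3} and Lemma \ref{lem2.8} makes the $-$ directions contracting under backward integration, the Duhamel integral of the $O(e^{-2c_0 t})$ source (from the bootstrap/Lemma \ref{lem2.9} bound on $\|w\|_{H^1}$) gives $|\bm{l}^-(t)|\le Ce^{-2c_0 t}$, which beats $\tfrac12 e^{-\frac32 c_0 t}$ once $T_0$ is large, with all constants independent of $n$.
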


\begin{proof}
The proof is similar to \cite[Lemma 5]{CMM11}. Thus we omit it. 
\end{proof}

The following is sufficient to show the uniform backward estimate. 

\begin{lemma}[Control of $\bm{l}^{+}(t)$]
For large $T_{0}$ (independent of $n$), there exists $\bm{\mathfrak{l}}^{+}\in B_{\mathbb{R}^{K+1}}(e^{-\frac{3}{2}c_{0}T_{n}})$ such that $T(\bm{\mathfrak{l}}^{+})=T_{0}$. 
\end{lemma}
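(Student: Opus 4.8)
The plan is to argue by contradiction and to run the topological shooting argument of \cite{CMM11}. Suppose $T(\bm{\mathfrak{l}}^{+})>T_{0}$ for every $\bm{\mathfrak{l}}^{+}\in B_{\mathbb{R}^{K+1}}(e^{-\frac{3}{2}c_{0}T_{n}})$. Fix such a $\bm{\mathfrak{l}}^{+}$ and set $t^{*}:=T(\bm{\mathfrak{l}}^{+})\in(T_{0},T_{n}]$. Since the quantities in conditions (1)--(2) depend continuously on $t$ and all the balls there are closed, at least one constraint is saturated at $t=t^{*}$. But Lemma \ref{lem2.9} yields the strict improvements $\|u_{n}(t^{*})-\mathcal{R}(t^{*})\|_{H^{1}}\le\varepsilon_{0}/2$, $e^{c_{0}t^{*}}w(t^{*})\in B_{H^{1}}(1/2)$, $e^{c_{0}t^{*}}\bm{y}(t^{*})\in B_{\mathbb{R}^{K-1}}(1/2)$, $e^{c_{0}t^{*}}\bm{\mu}(t^{*})\in B_{\mathbb{R}^{K}}(1/2)$, and Lemma \ref{lem3.14} yields $e^{\frac{3}{2}c_{0}t^{*}}\bm{l}^{-}(t^{*})\in B_{\mathbb{R}^{K+1}}(1/2)$. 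Hence the only constraint that can be active at $t^{*}$ is the one on $\bm{l}^{+}$, namely
\[
 e^{\frac{3}{2}c_{0}t^{*}}\bm{l}^{+}(t^{*})\in\mathbb{S}_{\mathbb{R}^{K+1}}(1),\qquad\text{i.e.}\qquad N(t^{*})=e^{-3c_{0}t^{*}},
\]
where $N(t):=|\bm{l}^{+}(t)|^{2}=\sum_{k\in\llbracket1,K\rrbracket}(a_{k}^{+}(t))^{2}+(b^{+}(t))^{2}$.

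The heart of the matter is a transversality property: at any $t\in[T_{0},T_{n}]$ at which $e^{3c_{0}t}N(t)=1$ and the bounds of Lemma \ref{lem2.9} hold, one has $\frac{d}{dt}\big(e^{3c_{0}t}N(t)\big)<0$ for $T_{0}$ large. To check this I would differentiate and substitute Lemma \ref{lem2.8}, getting
\[
 \frac{d}{dt}\big(e^{3c_{0}t}N(t)\big)=e^{3c_{0}t}\Big(3c_{0}N-2\sum_{k}\mathfrak{y}_{k}(a_{k}^{+})^{2}-2\mathfrak{z}_{k_{0}}(b^{+})^{2}+O\big(\sqrt{N}\,(\|w\|_{H^{1}}^{2}+e^{-3c_{0}t})\big)\Big).
\]
The choice $\sqrt{c_{0}}\le\frac{1}{10}\min\{\sqrt{\mathfrak{y}_{1}},\dots,\sqrt{\mathfrak{y}_{K}},\sqrt{\mathfrak{z}_{k_{0}}}\}$ forces $\mathfrak{y}_{k},\mathfrak{z}_{k_{0}}\ge100c_{0}$, so the spectral terms dominate $3c_{0}N$; and since $\|w\|_{H^{1}}^{2}\le\frac14e^{-2c_{0}t}$ on $[t^{*},T_{n}]$ by Lemma \ref{lem2.9}, at such a point ($N=e^{-3c_{0}t}$) the error is $O(e^{-\frac{7}{2}c_{0}t})$, negligible against $c_{0}N=c_{0}e^{-3c_{0}t}$ once $T_{0}$ is large. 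Applying this at $t=T_{n}$, where $e^{3c_{0}T_{n}}N(T_{n})=1$ for $|\bm{\mathfrak{l}}^{+}|=e^{-\frac{3}{2}c_{0}T_{n}}$ by Proposition \ref{P3.3}, shows the $\bm{l}^{+}$-constraint fails immediately below $T_{n}$ while the others hold strictly; hence $T(\bm{\mathfrak{l}}^{+})=T_{n}$ on the boundary sphere.

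I would then conclude by a retraction argument. The transversality above makes $\bm{\mathfrak{l}}^{+}\mapsto T(\bm{\mathfrak{l}}^{+})$ continuous on $B_{\mathbb{R}^{K+1}}(e^{-\frac{3}{2}c_{0}T_{n}})$, and therefore so is
\[
 \Phi:B_{\mathbb{R}^{K+1}}\!\big(e^{-\frac{3}{2}c_{0}T_{n}}\big)\longrightarrow\mathbb{S}_{\mathbb{R}^{K+1}}(1),\qquad
 \Phi(\bm{\mathfrak{l}}^{+}):=e^{\frac{3}{2}c_{0}T(\bm{\mathfrak{l}}^{+})}\,\bm{l}^{+}\big(T(\bm{\mathfrak{l}}^{+})\big),
\]
which is valued in the sphere by the first step. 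Since $\bm{l}^{+}(T_{n})=\bm{\mathfrak{l}}^{+}$ (Proposition \ref{P3.3}) and $T(\bm{\mathfrak{l}}^{+})=T_{n}$ on $\mathbb{S}_{\mathbb{R}^{K+1}}(e^{-\frac{3}{2}c_{0}T_{n}})$, the restriction of $\Phi$ to that sphere is the dilation $\bm{\mathfrak{l}}^{+}\mapsto e^{\frac{3}{2}c_{0}T_{n}}\bm{\mathfrak{l}}^{+}$, a homeomorphism onto $\mathbb{S}_{\mathbb{R}^{K+1}}(1)$; composing $\Phi$ with the inverse dilation produces a continuous retraction of the closed ball $B_{\mathbb{R}^{K+1}}(e^{-\frac{3}{2}c_{0}T_{n}})$ onto its boundary, contradicting Brouwer's no-retraction theorem. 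Hence $T(\bm{\mathfrak{l}}^{+})=T_{0}$ for some $\bm{\mathfrak{l}}^{+}\in B_{\mathbb{R}^{K+1}}(e^{-\frac{3}{2}c_{0}T_{n}})$. I expect the main obstacle to be the transversality computation of the second step — checking that the errors coming from Lemma \ref{lem2.8} and the bootstrap bound of Lemma \ref{lem2.9} are strictly lower order than the good spectral term $\sum_{k}\mathfrak{y}_{k}(a_{k}^{+})^{2}+\mathfrak{z}_{k_{0}}(b^{+})^{2}$, which is precisely where the factor $\frac{1}{10}$ in the definition of $c_{0}$ is used — together with the bookkeeping (via Lemmas \ref{lem2.9} and \ref{lem3.14}) that rules out any other constraint becoming active.
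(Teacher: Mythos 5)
Your proposal is correct and takes the approach the paper intends: the paper omits this proof, referring to \cite[Lemma 6]{CMM11}, and your argument---contradiction, identification of the only saturable constraint via Lemmas \ref{lem2.9} and \ref{lem3.14}, the transversality computation from Lemma \ref{lem2.8} using $\mathfrak{y}_k,\mathfrak{z}_{k_0}\ge 100c_0$, and the Brouwer no-retraction conclusion---is exactly that argument adapted to the $K+1$ unstable directions of the present setting. I see no substantive gap.
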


\begin{proof}
The proof is similar to \cite[Lemma 6]{CMM11}, so we omit it. 
\end{proof}

\appendix


\section{$H^{s}$-solvability}
\label{appB}

In this section, we prove that \eqref{deltaNLS} is locally well-posed in $H^s(\mathbb{R})$ for $s\in (1/2,1)$. The proof is based on the equivalency between the spaces $H^s$ and $\mathcal{D}((-\Delta_\gamma+\lambda)^{s/2})$. This fact may be well-known, but we are not aware of a proof and so we give one for the reader's convenience.
Our proof works even when $\gamma>0$, that is, the potential is attractive. Thus, we consider $\gamma \in \mathbb{R}\setminus\{0\}$ in this section.

\subsection{Equivalency of $\mathcal{D}((-\Delta_\gamma+\lambda)^{s/2})$ and $H^s$}

\subsubsection{Statement}

Let $\lambda >0$ when $\gamma<0$ and $\lambda>\gamma^{2}/4$ when $\gamma>0$. 

In this subsection, we will show the following equivalency between $\mathcal{D}((-\Delta_\gamma+\lambda)^{s/2})$ and the usual fractional Sobolev space $H^s(\mathbb{R})$ for $s\in (0,3/2)$. Our argument relies on \cite{GMS18}. 

\begin{theorem}
\label{thmC.1}
Let $s\in (0,3/2)$. 
We have $\mathcal{D}((-\Delta_\gamma+\lambda)^{s/2})=H^s(\mathbb{R})$. 
Moreover, we have
\begin{align*}
	\|f\|_{H_\gamma^s}:=\|(-\Delta_\gamma+\lambda)^{s/2}f\|_{L^2} \approx \|f\|_{H^s}.
\end{align*}
\end{theorem}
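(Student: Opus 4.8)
The plan is to establish the equivalence by comparing $B := -\Delta_\gamma + \lambda$ with the free operator $A := -\Delta + \lambda$ at the level of fractional powers, using the classical fact that $\mathcal{D}(A^{s/2}) = H^s(\mathbb{R})$ with $\|A^{s/2} f\|_{L^2} \approx \|f\|_{H^s}$ (Plancherel). Under the stated hypotheses on $\lambda$, both $A$ and $B$ are strictly positive self-adjoint operators (for $\gamma < 0$ one has $B \ge \lambda > 0$; for $\gamma > 0$ one has $B \ge \lambda - \gamma^2/4 > 0$), so $A^{s/2}$ and $B^{s/2}$ are well-defined injective closed operators, and it suffices to prove that
\[
S := A^{s/2} B^{-s/2}
\]
extends to a \emph{bounded, boundedly invertible} operator on $L^2(\mathbb{R})$. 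Granting this: for $f \in \mathcal{D}(B^{s/2})$, $A^{s/2} f = S(B^{s/2} f) \in L^2$, so $f \in H^s$ with $\|f\|_{H^s} \lesssim \|B^{s/2} f\|_{L^2}$; conversely, for $f \in H^s = \mathcal{D}(A^{s/2})$ the vector $h := B^{-s/2} S^{-1} A^{s/2} f$ lies in $\mathcal{D}(B^{s/2})$ and satisfies $A^{s/2} h = A^{s/2} f$, hence $h = f$ by injectivity of $A^{s/2}$, which gives $f \in \mathcal{D}(B^{s/2})$ and $\|B^{s/2} f\|_{L^2} \lesssim \|f\|_{H^s}$. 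Thus both the set equality and the two-sided norm bound follow.

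To analyze $S$, I would use the Krein resolvent formula for the one-dimensional $\delta$-interaction (see \cite{AGHKH88}): for every $t \ge 0$,
\[
(B + t)^{-1} = (A + t)^{-1} + a(t)\, \inp{G_{\lambda + t}}{\,\cdot\,}_{L^2}\, G_{\lambda + t},
\qquad
G_\mu(x) := \frac{1}{2\sqrt{\mu}}\, e^{-\sqrt{\mu}\,|x|},
\quad
a(t) := \frac{2\sqrt{\lambda + t}\,\gamma}{2\sqrt{\lambda + t} - \gamma},
\]
where, for $\gamma$ and $\lambda$ in the admissible range, the denominator stays bounded away from $0$ and $|a(t)| \le C$ uniformly in $t \ge 0$. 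Inserting this into the Balakrishnan formula $C^{-s/2} = \tfrac{\sin(\pi s/2)}{\pi} \int_0^\infty t^{-s/2} (C + t)^{-1}\, dt$, valid for positive self-adjoint $C$ and $s \in (0,2) \supset (0, 3/2)$, yields
\[
B^{-s/2} = A^{-s/2} + K_s,
\qquad
K_s := \frac{\sin(\pi s/2)}{\pi} \int_0^\infty t^{-s/2} a(t)\, \inp{G_{\lambda + t}}{\,\cdot\,}_{L^2}\, G_{\lambda + t}\, dt,
\]
the integral converging in operator norm since $\|G_{\lambda + t}\|_{L^2}^2 = \tfrac{1}{4}(\lambda + t)^{-3/2}$. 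Hence $S = I + A^{s/2} K_s$, and everything reduces to bounding
\[
A^{s/2} K_s = \frac{\sin(\pi s/2)}{\pi} \int_0^\infty t^{-s/2} a(t)\, \inp{G_{\lambda + t}}{\,\cdot\,}_{L^2}\, \big(A^{s/2} G_{\lambda + t}\big)\, dt
\]
on $L^2(\mathbb{R})$.

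The decisive point is the bound on $\|A^{s/2} G_\mu\|_{L^2}$ for $\mu = \lambda + t \ge \lambda$: since $\widehat{G_\mu}(\xi) = (\xi^2 + \mu)^{-1}$,
\[
\|A^{s/2} G_\mu\|_{L^2}^2 = \int_{\mathbb{R}} \frac{(\xi^2 + \lambda)^s}{(\xi^2 + \mu)^2}\, d\xi \le \int_{\mathbb{R}} \frac{d\xi}{(\xi^2 + \mu)^{2 - s}} = \mu^{\,s - 3/2} \int_{\mathbb{R}} \frac{d\eta}{(\eta^2 + 1)^{2 - s}},
\]
and the last integral is finite \emph{precisely because $s < 3/2$} (for $s \ge 3/2$ it diverges as $\xi \to \infty$, reflecting the corner of $G_\mu$ at the origin, i.e. $G_\mu \notin H^s$). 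Therefore $\|A^{s/2} G_{\lambda + t}\|_{L^2} \lesssim (\lambda + t)^{s/2 - 3/4}$, and combining with $\|G_{\lambda + t}\|_{L^2} \lesssim (\lambda + t)^{-3/4}$ and $|a(t)| \le C$,
\[
\big\| A^{s/2} K_s \big\|_{L^2 \to L^2} \lesssim \int_0^\infty t^{-s/2} (\lambda + t)^{\,s/2 - 3/2}\, dt < \infty,
\]
the integrand being $O(t^{-s/2})$ near $0$ (integrable, as $s < 2$) and $O(t^{-3/2})$ near $\infty$. Hence $A^{s/2} K_s$ is bounded; being a norm-convergent integral of rank-one operators it is moreover compact, so $S = I + A^{s/2} K_s$ is Fredholm of index $0$. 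Finally, $S$ is injective: $Sf = 0$ forces $B^{-s/2} f \in \Ker A^{s/2} = \{0\}$, so $f = 0$. Thus $S$ is boundedly invertible, and the proof is complete.

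I expect the main obstacle to be the analytic bookkeeping needed to make the formal steps rigorous: justifying the interchange of the unbounded operator $A^{s/2}$ with the Balakrishnan integral (licensed by the operator-norm convergence just displayed and the closedness of $A^{s/2}$, applied first to truncated integrals $\int_\varepsilon^N$) and verifying that $\Ran K_s \subseteq \mathcal{D}(A^{s/2}) = H^s$ so that $S = A^{s/2}B^{-s/2}$ genuinely equals $I + A^{s/2}K_s$ on all of $L^2$. The only sharp quantitative input is the elementary estimate $\|G_\mu\|_{H^s} \lesssim \mu^{s/2 - 3/4}$, which is exactly where the hypothesis $s \in (0,3/2)$ enters and beyond which the theorem fails.
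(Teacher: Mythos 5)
Your proposal is correct, and while it runs on the same fuel as the paper's proof, the architecture is genuinely different. Both arguments rest on Krein's resolvent formula for $-\Delta_\gamma$ (the paper's Lemma \ref{lemC.3}), the Balakrishnan-type integral representation of fractional powers (Lemma \ref{lemC.2}), and the single quantitative fact that the resolvent kernel $G_\mu$ lies in $H^s$ exactly for $s<3/2$, with $\|G_\mu\|_{H^s}\lesssim \mu^{s/2-3/4}$ and $\|G_\mu\|_{L^2}\lesssim\mu^{-3/4}$; your convergence check $\int_0^\infty t^{-s/2}(\lambda+t)^{(s-3)/2}\,dt<\infty$ and the uniform bound on the coupling coefficient $a(t)$ (which uses $\lambda>\gamma^2/4$ when $\gamma>0$, as in the paper's standing assumption) are exactly right. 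The difference is in how the two inclusions are extracted. The paper proves them separately by two explicit mapping estimates: Lemma \ref{lemC.8} shows the correction $I_F$ to $(-\Delta_\gamma+\lambda)^{s/2}F$ is $L^2$-bounded by $\|F\|_{H^s}$ (giving $H^s\subset\mathcal{D}((-\Delta_\gamma+\lambda)^{s/2})$), and the proof of Theorem \ref{thmC.1} shows the operator $\mathcal{C}$ is bounded on $H^s$ so that $g=\mathcal{A}(g)+\mathcal{C}(\mathcal{A}(g))$ gives the reverse inclusion; each estimate needs its own Schur-type splitting with auxiliary exponents. You instead package everything into the single operator $S=A^{s/2}B^{-s/2}=I+A^{s/2}K_s$, prove one rank-one-kernel bound showing $A^{s/2}K_s$ is bounded and in fact compact, and then obtain bounded invertibility of $S$ for free from injectivity via the Fredholm alternative; both inclusions and the two-sided norm equivalence then follow formally. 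This trades the paper's second hard estimate for soft operator theory, at the price of the bookkeeping you yourself flag (passing $A^{s/2}$ through the truncated Bochner integrals by closedness, and checking $\Ran K_s\subset\mathcal{D}(A^{s/2})$), which is routine given the norm convergence you display. One could quibble that compactness is not strictly needed if one were willing to estimate more (e.g.\ smallness for large $\lambda$), but as written the Fredholm route is clean and covers every admissible $\lambda$, so the proposal stands as a somewhat more economical alternative to the paper's two-sided explicit construction.
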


\subsubsection{Elementary lemmas}

Before considering Theorem \ref{thmC.1}, we give some elementary lemmas. We set 
\begin{align*}
	G_k(x)= i(2k)^{-1}e^{ik|x|}
\end{align*}
for $k\in \mathbb{C}$. Then easy calculations imply the following. 

\begin{lemma}
\label{lemC.2}
For $x\geq 0$ and $s\in (0,2)$, we have 
\begin{align*}
	x^{s/2}=\frac{\sin\left(\frac{\pi}{2}s\right)}{\pi} \int_{0}^{\infty} t^{\frac{s}{2}-1} \frac{x}{t+x} dt.
\end{align*}
\end{lemma}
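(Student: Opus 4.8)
The identity is the classical integral representation of the fractional power $x \mapsto x^{s/2}$, and the plan is to reduce it to Euler's Beta integral $\int_0^\infty \frac{u^{a-1}}{1+u}\,du = \frac{\pi}{\sin(\pi a)}$, valid for $0<a<1$, with $a = s/2$. First I would dispose of the trivial case $x=0$, where both sides vanish (note $s/2-1>-1$ guarantees integrability of $t^{s/2-1}$ near $t=0$, so the right-hand side is a well-defined, in fact zero, integrand when $x=0$). For $x>0$ I would verify that the integral converges: near $t=0$ the integrand behaves like $t^{s/2-1}$, integrable since $s>0$, and near $t=\infty$ it behaves like $t^{s/2-2}$, integrable since $s<2$; thus the range $s\in(0,2)$ is precisely what the statement needs.

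\textbf{Key steps.} Fix $x>0$ and perform the change of variables $t = xu$, $dt = x\,du$. Then $t^{s/2-1} = x^{s/2-1}u^{s/2-1}$ and $\frac{x}{t+x} = \frac{1}{1+u}$, so
\begin{align*}
\int_0^\infty t^{\frac{s}{2}-1}\,\frac{x}{t+x}\,dt
= x^{\frac{s}{2}} \int_0^\infty \frac{u^{\frac{s}{2}-1}}{1+u}\,du.
\end{align*}
Next I would invoke the Beta-function evaluation: with $a = s/2 \in (0,1)$,
\begin{align*}
\int_0^\infty \frac{u^{a-1}}{1+u}\,du = B(a,1-a) = \Gamma(a)\Gamma(1-a) = \frac{\pi}{\sin(\pi a)},
\end{align*}
the last equality being Euler's reflection formula. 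Substituting $a = s/2$ gives $\int_0^\infty \frac{u^{s/2-1}}{1+u}\,du = \frac{\pi}{\sin(\pi s/2)}$, and therefore
\begin{align*}
\frac{\sin\!\left(\frac{\pi}{2}s\right)}{\pi} \int_0^\infty t^{\frac{s}{2}-1}\,\frac{x}{t+x}\,dt
= \frac{\sin\!\left(\frac{\pi}{2}s\right)}{\pi}\cdot x^{\frac{s}{2}}\cdot\frac{\pi}{\sin\!\left(\frac{\pi}{2}s\right)} = x^{\frac{s}{2}},
\end{align*}
which is the claim.

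\textbf{Main obstacle.} There is essentially no obstacle here: the lemma is an elementary computation once the Beta integral is recalled, and the only point requiring a line of care is the convergence bookkeeping at the two endpoints of the integral, which pins down the admissible exponent range $s\in(0,2)$. If one prefers to avoid quoting the Beta integral, an alternative is contour integration of $z^{s/2-1}/(1+z)$ with a keyhole contour, but this is heavier than necessary; the Beta-function route is the cleanest.
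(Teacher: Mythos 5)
Your proof is correct: the substitution $t=xu$ together with the Beta integral $\int_0^\infty u^{a-1}(1+u)^{-1}\,du=\pi/\sin(\pi a)$ for $a=s/2\in(0,1)$, plus the endpoint convergence check, is exactly the elementary computation the paper has in mind (the paper states the lemma as following from "easy calculations" and gives no further detail). Nothing is missing.
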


\begin{lemma}
\label{lemC.4}
We have
\begin{align*}
	\widehat{G_{i\sqrt{\lambda+t}}}(\xi)
	=\frac{1}{\sqrt{2\pi}(|\xi|^2+\lambda+t)},
\end{align*}
where $\widehat{f}(\xi)=(2\pi)^{-1/2}\int f(x)e^{-ix\xi}dx$. 
\end{lemma}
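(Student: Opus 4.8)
The plan is to reduce the claim to a one-line elementary computation. First I would substitute $k=i\sqrt{\lambda+t}$ into the definition $G_k(x)=i(2k)^{-1}e^{ik|x|}$. Since $i/(2k)=1/(2\sqrt{\lambda+t})$ and $ik=-\sqrt{\lambda+t}$, this gives
\[
G_{i\sqrt{\lambda+t}}(x)=\frac{1}{2\sqrt{\lambda+t}}\,e^{-\sqrt{\lambda+t}\,|x|},
\]
an integrable even function of $x$, so its Fourier transform is well defined pointwise by the integral formula. I would then record, for $a>0$, the standard identity
\[
\int_{\R}e^{-a|x|}e^{-ix\xi}\,dx
=\int_0^\infty e^{-(a+i\xi)x}\,dx+\int_0^\infty e^{-(a-i\xi)x}\,dx
=\frac{1}{a+i\xi}+\frac{1}{a-i\xi}=\frac{2a}{a^2+\xi^2},
\]
and apply it with $a=\sqrt{\lambda+t}$, so that
\[
\widehat{G_{i\sqrt{\lambda+t}}}(\xi)
=\frac{1}{\sqrt{2\pi}}\cdot\frac{1}{2\sqrt{\lambda+t}}\cdot\frac{2\sqrt{\lambda+t}}{(\lambda+t)+\xi^2}
=\frac{1}{\sqrt{2\pi}\,(|\xi|^2+\lambda+t)},
\]
which is exactly the asserted formula.

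An alternative — and perhaps cleaner — route I would mention is to use that $G_k$ is the resolvent kernel of $-\partial_x^2$: a direct check of the jump relation $G_k'(0+)-G_k'(0-)=-1$ shows that $(-\partial_x^2-k^2)G_k=\delta_0$ in the sense of distributions. Taking Fourier transforms yields $(|\xi|^2-k^2)\,\widehat{G_k}(\xi)=(2\pi)^{-1/2}$, and substituting $k^2=-(\lambda+t)$ gives the claim at once, bypassing even the explicit integral.

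I do not expect any genuine obstacle here; the statement is elementary. The only points requiring mild care are the sign of $ik$ (so that the exponential decays rather than grows, which is where $\lambda+t>0$ — guaranteed by the running assumption $\lambda>0$ for $\gamma<0$ and $\lambda>\gamma^2/4$ for $\gamma>0$ — enters) and the precise Fourier normalization, both of which are already pinned down by the statement. I would present the computation in the first form above, since Lemma \ref{lemC.2} and this lemma are used together, and the explicit resolvent kernel is the object actually needed in the sequel.
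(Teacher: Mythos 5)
Your computation is correct and is exactly the "easy calculation" the paper leaves implicit: it states the lemma without proof, and the explicit kernel $G_{i\sqrt{\lambda+t}}(x)=\tfrac{1}{2\sqrt{\lambda+t}}e^{-\sqrt{\lambda+t}|x|}$ together with the standard Fourier transform of $e^{-a|x|}$ is precisely what is needed (and is the form used later in the appendix). No gaps; the resolvent-kernel alternative you mention is also fine but unnecessary.
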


The formula of the resolvent of $-\Delta_\gamma - k^2$ is known as follows. 

\begin{lemma}[{\cite[p.77, Theorem 3.1.2]{AGHKH88}}]
\label{lemC.3}
Let $ \rho (T)$ denote the resolvent set of an operator $T$. 
For $k^2 \in \rho (-\Delta_{\gamma})$, $\Im k >0$, it holds that
\begin{align*}
	(-\Delta_\gamma - k^2)^{-1}=(-\Delta_0 - k^2)^{-1} + 2\gamma k (-i\gamma +2k)^{-1} \langle  \cdot , \overline{G_k} \rangle G_k,
\end{align*}
where 
we recall $\langle f , g \rangle=\int_{\mathbb{R}} f(y)\overline{g(y)}dy$.
\end{lemma}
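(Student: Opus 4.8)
The plan is to verify the formula directly; it is \cite[Theorem~3.1.2]{AGHKH88}, but a short self-contained argument is available. Write $R(k)$ for the operator on the right-hand side, set $c(k):=2\gamma k(-i\gamma+2k)^{-1}$ and $R_0(k):=(-\Delta_0-k^2)^{-1}$, so that $R(k)f=R_0(k)f+c(k)\langle f,\overline{G_k}\rangle G_k$. First I would show that $R(k)$ is a bounded operator from $L^2(\mathbb{R})$ into $\mathcal{D}(-\Delta_\gamma)$ satisfying $(-\Delta_\gamma-k^2)R(k)f=f$. Granting this, since $-\Delta_\gamma$ is self-adjoint and $k^2\in\rho(-\Delta_\gamma)$, the operator $-\Delta_\gamma-k^2\colon\mathcal{D}(-\Delta_\gamma)\to L^2(\mathbb{R})$ is a bijection, so its bounded right inverse must be $(-\Delta_\gamma-k^2)^{-1}$, and the formula follows.

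Next I would collect the standard facts about the free resolvent for $\Im k>0$, all elementary (cf. Lemma~\ref{lemC.4}): $R_0(k)$ is convolution with the even kernel $G_k$; one has $G_k\in H^1(\mathbb{R})\cap H^2(\mathbb{R}\setminus\{0\})\cap L^2(\mathbb{R})$ with $G_k(0)=i/(2k)$ and $G_k'(0+)-G_k'(0-)=-1$; and $-G_k''-k^2G_k=\delta_0$ in $\mathcal{D}'(\mathbb{R})$, while $-G_k''-k^2G_k=0$ pointwise on $\mathbb{R}\setminus\{0\}$. Boundedness of $R(k)$ on $L^2(\mathbb{R})$ is then immediate, since $f\mapsto\langle f,\overline{G_k}\rangle G_k$ is a bounded rank-one operator (as $G_k\in L^2$) and $c(k)$ is finite: indeed $-i\gamma+2k=0$ would force $k^2=-\gamma^2/4\notin\rho(-\Delta_\gamma)$, contrary to the hypothesis.

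Then, for $f\in L^2(\mathbb{R})$, I would put $g:=R(k)f$ and check the two conditions defining $\mathcal{D}(-\Delta_\gamma)$. Since $R_0(k)f\in H^2(\mathbb{R})$ is $C^1$, we have $g\in H^1(\mathbb{R})\cap H^2(\mathbb{R}\setminus\{0\})$, and the jump of $g'$ at the origin comes only from the $G_k$ term. Using the even symmetry of $G_k$ to write $(R_0(k)f)(0)=\langle f,\overline{G_k}\rangle$, one gets $g'(0+)-g'(0-)=-c(k)\langle f,\overline{G_k}\rangle$ and $g(0)=\langle f,\overline{G_k}\rangle\bigl(1+c(k)G_k(0)\bigr)$. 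For $f$ with $\langle f,\overline{G_k}\rangle\ne0$ (the complementary case being trivial), the condition $g'(0+)-g'(0-)=-\gamma g(0)$ becomes $c(k)=\gamma\bigl(1+i\gamma c(k)/(2k)\bigr)$, i.e. $c(k)(2k-i\gamma)=2\gamma k$, which is precisely the defining identity for $c(k)=2\gamma k(-i\gamma+2k)^{-1}$; hence $g\in\mathcal{D}(-\Delta_\gamma)$. Finally, on $\mathbb{R}\setminus\{0\}$ one computes $-g''-k^2g=\bigl(-(R_0(k)f)''-k^2R_0(k)f\bigr)+c(k)\langle f,\overline{G_k}\rangle\bigl(-G_k''-k^2G_k\bigr)=f+0=f$, so $(-\Delta_\gamma-k^2)g=f$ in $L^2(\mathbb{R})$ because $-\Delta_\gamma$ acts as $-\partial_x^2$ off the origin on its domain. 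The one step requiring genuine care is this bookkeeping of the trace and of the derivative jump of $g$ at $0$ — it is exactly what pins down the constant $2\gamma k(-i\gamma+2k)^{-1}$ — while the remaining verifications are routine; alternatively, one may simply invoke \cite[Theorem~3.1.2]{AGHKH88} verbatim.
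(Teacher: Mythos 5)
The paper offers no argument for this lemma at all: it is quoted verbatim from \cite[Theorem 3.1.2]{AGHKH88}. Your self-contained verification is therefore a genuinely different (and more informative) route. The structure is sound: you show the right-hand side is bounded on $L^2(\mathbb{R})$, maps into $\mathcal{D}(-\Delta_\gamma)$ — with the jump condition $g'(0+)-g'(0-)=-\gamma g(0)$ being exactly what pins down the constant $2\gamma k(-i\gamma+2k)^{-1}$ — and is a right inverse of $-\Delta_\gamma-k^2$, which, since the latter is a bijection from $\mathcal{D}(-\Delta_\gamma)$ onto $L^2(\mathbb{R})$ by the hypothesis $k^2\in\rho(-\Delta_\gamma)$, forces it to coincide with the resolvent; the free-resolvent facts you use ($R_0(k)f=G_k*f$, $G_k(0)=i/(2k)$, derivative jump $-1$, $(R_0(k)f)(0)=\langle f,\overline{G_k}\rangle$ by evenness) are all correct. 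What this buys is independence from the reference, at the cost of the trace/jump bookkeeping that the paper avoids by citation. Two small repairs are needed, neither a genuine gap: (i) the intermediate identity should read $c(k)=\gamma\bigl(1+i\,c(k)/(2k)\bigr)$ — your version carries a spurious extra factor of $\gamma$, though the equivalent form $c(k)(2k-i\gamma)=2\gamma k$ that you deduce from it is the correct one and matches the definition of $c(k)$; (ii) your justification that $-i\gamma+2k\neq0$ ("it would force $k^2=-\gamma^2/4\notin\rho(-\Delta_\gamma)$") is only valid for $\gamma>0$; in the repulsive case $\gamma<0$ one has $-\gamma^2/4\in\rho(-\Delta_\gamma)$, and the point $k=i\gamma/2$ is instead excluded because it has $\Im k<0$, contradicting the hypothesis $\Im k>0$ (recall the appendix allows both signs of $\gamma$).
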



\subsubsection{Proof of Theorem \ref{thmC.1}}
To show Theorem \ref{thmC.1}, we prove some lemmas. 

\begin{lemma}
\label{lemB.5}
Let $s \in (0,2)$. 
A function $g\in \mathcal{D}((-\Delta_\gamma+\lambda)^{s/2})$ is written by
\begin{align*}
	g
	=\mathcal{A}(g) + \mathcal{B}(g),
\end{align*}
where 
\begin{align*}
	\mathcal{A}(g) &:=  (-\Delta_0 + \lambda)^{-s/2}(-\Delta_\gamma + \lambda)^{s/2} g,
	\\
	\mathcal{B}(g)
	&:=\frac{2\gamma\sin\left(\frac{\pi}{2}s\right)}{\pi} \int_{0}^{\infty} t^{-\frac{s}{2}}  \frac{\sqrt{\lambda+t} c_g^\gamma(t)}{-\gamma +2\sqrt{\lambda+t} } 
	 \frac{e^{-\sqrt{\lambda+t}|x|}}{2\sqrt{\lambda+t}} dt,
	\\
	c_g^\gamma(t) &:=\langle  (-\Delta_\gamma + \lambda)^{s/2} g, \overline{G_{i\sqrt{\lambda+t}}}  \rangle.
\end{align*}
Moreover, $\mathcal{A}(g) \in H^s(\mathbb{R})$, $\mathcal{B}(g) \in L^2(\mathbb{R})$, and $\mathcal{A}:\mathcal{D}((-\Delta_\gamma+\lambda)^{s/2})\to H^s(\mathbb{R})$ is surjective. 
\end{lemma}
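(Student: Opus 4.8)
The plan is to start from $g=(-\Delta_\gamma+\lambda)^{-s/2}f$ with $f:=(-\Delta_\gamma+\lambda)^{s/2}g\in L^2(\mathbb{R})$, to represent the fractional power $(-\Delta_\gamma+\lambda)^{-s/2}$ as a resolvent integral, and then to feed in the explicit resolvent formula of Lemma \ref{lemC.3} to isolate the delta-interaction correction. Applying Lemma \ref{lemC.2} with $s$ there replaced by $2-s$ gives the elementary identity $x^{-s/2}=\frac{\sin(\pi s/2)}{\pi}\int_0^\infty t^{-s/2}(t+x)^{-1}\,dt$ for $x>0$. Since $-\Delta_\gamma+\lambda$ is a positive self-adjoint operator with $0\notin\sigma(-\Delta_\gamma+\lambda)$ (this uses $\lambda>0$ when $\gamma<0$ and $\lambda>\gamma^2/4$ when $\gamma>0$), the spectral theorem upgrades this to the operator identity
\begin{align*}
(-\Delta_\gamma+\lambda)^{-s/2}f=\frac{\sin(\pi s/2)}{\pi}\int_0^\infty t^{-s/2}(t-\Delta_\gamma+\lambda)^{-1}f\,dt,
\end{align*}
with the integral converging absolutely in $L^2(\mathbb{R})$ because $\|(t-\Delta_\gamma+\lambda)^{-1}\|_{L^2\to L^2}\le(t+\lambda_0)^{-1}$, where $\lambda_0:=\inf\sigma(-\Delta_\gamma+\lambda)>0$, and $\int_0^\infty t^{-s/2}(t+\lambda_0)^{-1}\,dt<\infty$ for $s\in(0,2)$.

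Next I would insert Lemma \ref{lemC.3} with $k=i\sqrt{\lambda+t}$, so that $k^2=-(\lambda+t)$ lies in $\rho(-\Delta_\gamma)$ and $\Im k>0$; an elementary computation gives $2\gamma k(-i\gamma+2k)^{-1}=\frac{2\gamma\sqrt{\lambda+t}}{-\gamma+2\sqrt{\lambda+t}}$ and $G_{i\sqrt{\lambda+t}}(x)=\frac{e^{-\sqrt{\lambda+t}|x|}}{2\sqrt{\lambda+t}}$, while $\langle f,\overline{G_{i\sqrt{\lambda+t}}}\rangle=c_g^\gamma(t)$ by definition. Hence
\begin{align*}
(t-\Delta_\gamma+\lambda)^{-1}f=(t-\Delta_0+\lambda)^{-1}f+\frac{2\gamma\sqrt{\lambda+t}}{-\gamma+2\sqrt{\lambda+t}}\,c_g^\gamma(t)\,G_{i\sqrt{\lambda+t}}.
\end{align*}
Multiplying by $\frac{\sin(\pi s/2)}{\pi}t^{-s/2}$, integrating over $t\in(0,\infty)$ and splitting the integral---legitimate once each resulting integral converges absolutely in $L^2(\mathbb{R})$---the first piece reassembles, via the same representation formula applied to $-\Delta_0+\lambda$, into $(-\Delta_0+\lambda)^{-s/2}f=\mathcal{A}(g)$, and the second piece is precisely $\mathcal{B}(g)$. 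This gives $g=\mathcal{A}(g)+\mathcal{B}(g)$.

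It then remains to verify the three assertions. That $\mathcal{A}(g)=(-\Delta_0+\lambda)^{-s/2}f\in H^s(\mathbb{R})$ is immediate from $f\in L^2(\mathbb{R})$ and the fact that $(-\Delta_0+\lambda)^{-s/2}$, being the Fourier multiplier $(|\xi|^2+\lambda)^{-s/2}$, is an isomorphism $L^2(\mathbb{R})\to H^s(\mathbb{R})$; composing this isomorphism with the bijection $(-\Delta_\gamma+\lambda)^{s/2}\colon\mathcal{D}((-\Delta_\gamma+\lambda)^{s/2})\to L^2(\mathbb{R})$ shows that $\mathcal{A}$ is a bijection onto $H^s(\mathbb{R})$, hence surjective. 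For $\mathcal{B}(g)\in L^2(\mathbb{R})$ I would use Minkowski's integral inequality together with $\|e^{-\sqrt{\lambda+t}\,|\cdot|}\|_{L^2}=(\lambda+t)^{-1/4}$, the uniform bound $\sup_{t\ge 0}\frac{\sqrt{\lambda+t}}{|{-\gamma}+2\sqrt{\lambda+t}|}<\infty$ (again coming from the hypothesis on $\lambda$), and the Cauchy--Schwarz estimate $|c_g^\gamma(t)|\le\|f\|_{L^2}\|G_{i\sqrt{\lambda+t}}\|_{L^2}=\frac12\|f\|_{L^2}(\lambda+t)^{-3/4}$; these reduce $\|\mathcal{B}(g)\|_{L^2}$ to a constant multiple of $\|f\|_{L^2}\int_0^\infty t^{-s/2}(\lambda+t)^{-3/2}\,dt$, which is finite precisely because $s\in(0,2)$ (integrability at $t=0$ needs $s<2$, and at $t=\infty$ it is automatic). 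The same bound also supplies the absolute convergence needed to split the integral in the previous step, so the argument is self-contained.

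\textbf{The main difficulty} is not any single deep estimate but the care required to make the resolvent-integral representation and the term-by-term splitting rigorous---absolute convergence of the relevant Bochner integrals, the domain of validity of the spectral calculus, and keeping track of which powers of $(\lambda+t)$ appear where---after which everything reduces to the elementary computation $\int_0^\infty t^{-s/2}(\lambda+t)^{-3/2}\,dt<\infty$ and the standard mapping properties of $(-\Delta_0+\lambda)^{\pm s/2}$.
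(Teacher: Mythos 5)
Your proposal is correct and follows essentially the same route as the paper: the integral representation of the fractional power (Lemma \ref{lemC.2}, which the paper applies to $x^{s/2}$ of the inverse operator and then converts by the substitution $t\mapsto t^{-1}$ into exactly your formula $x^{-s/2}=\frac{\sin(\pi s/2)}{\pi}\int_0^\infty t^{-s/2}(t+x)^{-1}dt$), combined with the resolvent formula of Lemma \ref{lemC.3} at $k=i\sqrt{\lambda+t}$, yielding the same split into $\mathcal{A}(g)$ and $\mathcal{B}(g)$ and the same surjectivity argument via $(-\Delta_\gamma+\lambda)^{-s/2}(-\Delta_0+\lambda)^{s/2}$. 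The only (harmless) deviation is that you bound $\|\mathcal{B}(g)\|_{L^2}$ directly, which also justifies splitting the integral, whereas the paper simply notes $\mathcal{B}(g)=g-\mathcal{A}(g)\in L^2$.
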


\begin{proof}

By fractional calculus and Lemma \ref{lemC.2}, we have 
\begin{align*}
	(-\Delta_\gamma + \lambda)^{-s/2} 
	&= \frac{\sin\left(\frac{\pi}{2}s\right)}{\pi} \int_{0}^{\infty} t^{\frac{s}{2}-1} (-\Delta_\gamma + \lambda)^{-1}\{t+(-\Delta_\gamma + \lambda)^{-1}\}^{-1} dt
	\\
	& = \frac{\sin\left(\frac{\pi}{2}s\right)}{\pi} \int_{0}^{\infty} t^{\frac{s}{2}-2} \{-\Delta_\gamma +( \lambda+t^{-1})\}^{-1} dt.
\end{align*}

Applying Lemma \ref{lemC.3} as $k=i\sqrt{\lambda+t^{-1}}$, we get 
\begin{align*}
	(-\Delta_\gamma + \lambda)^{-s/2} 
	&= \frac{\sin\left(\frac{\pi}{2}s\right)}{\pi} \int_{0}^{\infty} t^{\frac{s}{2}-2} \{-\Delta_\gamma +( \lambda+t^{-1})\}^{-1} dt
	\\
	&= \frac{\sin\left(\frac{\pi}{2}s\right)}{\pi} \int_{0}^{\infty} t^{\frac{s}{2}-2} \{-\Delta_0 +( \lambda+t^{-1})\}^{-1} dt
	\\
	&\quad + \frac{2\gamma\sin\left(\frac{\pi}{2}s\right)}{\pi} \int_{0}^{\infty} t^{\frac{s}{2}-2} \frac{ \sqrt{\lambda+t^{-1}} \langle  \cdot , \overline{G_{i\sqrt{\lambda+t^{-1}}}} \rangle }{-\gamma +2\sqrt{\lambda+t^{-1}}}  G_{i\sqrt{\lambda+t^{-1}}} dt
	\\
	&= (-\Delta_0 + \lambda)^{-s/2}  
	\\
	&\quad + \frac{2\gamma\sin\left(\frac{\pi}{2}s\right)}{\pi} \int_{0}^{\infty} t^{\frac{s}{2}-2} \frac{ \sqrt{\lambda+t^{-1}} \langle  \cdot , \overline{G_{i\sqrt{\lambda+t^{-1}}}} \rangle }{-\gamma +2\sqrt{\lambda+t^{-1}}}  G_{i\sqrt{\lambda+t^{-1}}} dt.
\end{align*}

Applying this to $(-\Delta_\gamma + \lambda)^{s/2} g$ for $g\in \mathcal{D}((-\Delta_\gamma+\lambda)^{s/2})$, we get
\begin{align*}
	g
	=\mathcal{A}(g) + \mathcal{B}(g),
\end{align*}
where it holds that
\begin{align*}
	\mathcal{B}(g)=\frac{2\gamma\sin\left(\frac{\pi}{2}s\right)}{\pi} \int_{0}^{\infty} t^{\frac{s}{2}-2} \frac{ \sqrt{\lambda+t^{-1}} \langle (-\Delta_\gamma + \lambda)^{s/2} g , \overline{G_{i\sqrt{\lambda+t^{-1}}}} \rangle }{-\gamma +2\sqrt{\lambda+t^{-1}}}  G_{i\sqrt{\lambda+t^{-1}}} dt
\end{align*}
by $G_{i\sqrt{\lambda+t^{-1}}}(x)=\frac{e^{-\sqrt{\lambda+t^{-1}}|x|}}{2\sqrt{\lambda+t^{-1}}}$, the definition of $c_g^\gamma$, and change of variables. Therefore, the first statement is shown. 
The statement $\mathcal{A}(g) \in H^s(\mathbb{R})$ is obvious and 
\begin{align*}
	\mathcal{B}(g)=g-\mathcal{A}(g) \in L^2(\mathbb{R}).
\end{align*}
Take $\widetilde{f}\in H^s(\mathbb{R})$. Then
\begin{align*}
	\widetilde{g}:=(-\Delta_\gamma + \lambda)^{-s/2}(-\Delta_0 + \lambda)^{s/2}\widetilde{f} \in \mathcal{D}((-\Delta_\gamma+\lambda)^{s/2})
\end{align*}
and $\mathcal{A}(\widetilde{g}) =\widetilde{f}$. This shows that $\mathcal{A}:\mathcal{D}((-\Delta_\gamma+\lambda)^{s/2})\to H^s(\mathbb{R})$ is surjective. 
\end{proof}

\begin{lemma}
\label{lemB.6}
Let $s \in (0,2)$. 
For $\varphi \in L^2(\mathbb{R})$, we have
\begin{align*}
	(-\Delta_\gamma + \lambda)^{s/2} \varphi 
	=(-\Delta_0 + \lambda)^{s/2} \varphi 
	+\frac{\sin\left(\frac{\pi}{2}s\right)}{\pi} \int_{0}^{\infty} \frac{-\gamma t^{\frac{s}{2}} \kappa_\varphi(t) }{-\gamma+2\sqrt{t+\lambda}}  e^{-\sqrt{t+\lambda}|x|} dt
\end{align*}
in the distributional sense, where
\begin{align*}
	\kappa_{\varphi}(t):=\langle \varphi , \overline{G_{i\sqrt{t+\lambda}}} \rangle.
\end{align*}
\end{lemma}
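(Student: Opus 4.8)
The plan is to run the same strategy as in Lemma~\ref{lemB.5}, but dualized: instead of the integral representation of the \emph{negative} power $(-\Delta_\gamma+\lambda)^{-s/2}$, one uses the subordination formula for the \emph{positive} power $(-\Delta_\gamma+\lambda)^{s/2}$. The relevant exponent is $\sigma=s/2\in(0,1)$, which is exactly why the statement is restricted to $s\in(0,2)$. Concretely, Lemma~\ref{lemC.2} applied to $a$ in the spectrum of $-\Delta_\gamma+\lambda$ (which is contained in $[c,\infty)$ for some $c>0$, since $-\Delta_\gamma\ge 0$ when $\gamma<0$ and $-\Delta_\gamma\ge-\gamma^2/4$ when $\gamma>0$, together with the respective hypothesis on $\lambda$), combined with the spectral theorem, gives
\[
(-\Delta_\gamma+\lambda)^{s/2}=\frac{\sin\left(\tfrac{\pi}{2}s\right)}{\pi}\int_0^\infty t^{\frac s2-1}\,(-\Delta_\gamma+\lambda)\bigl(t+(-\Delta_\gamma+\lambda)\bigr)^{-1}\,dt ,
\]
and similarly with $-\Delta_0$ in place of $-\Delta_\gamma$. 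I would first establish the identity for $\varphi\in C^\infty_c(\R\setminus\{0\})$, a subspace dense in $L^2(\R)$ which lies in both $\mathcal{D}(-\Delta_\gamma)$ and $\mathcal{D}(-\Delta_0)$ and for which $\kappa_\varphi(t)$ decays exponentially in $\sqrt t$; then I would pass to general $\varphi\in L^2$ by density.

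For the core computation, write $(-\Delta_\gamma+\lambda)(t+(-\Delta_\gamma+\lambda))^{-1}=\Id-t\,(-\Delta_\gamma+(t+\lambda))^{-1}$ and apply Lemma~\ref{lemC.3} with $k=i\sqrt{t+\lambda}$; this is admissible since $k^2=-(t+\lambda)\in\rho(-\Delta_\gamma)$, again using $\lambda>0$ (resp.\ $\lambda>\gamma^2/4$). Unlike in Lemma~\ref{lemB.5}, no change of variables is needed here. Using $G_{i\sqrt{t+\lambda}}(x)=\frac{e^{-\sqrt{t+\lambda}|x|}}{2\sqrt{t+\lambda}}$ and simplifying $2\gamma k(-i\gamma+2k)^{-1}=\frac{2\gamma\sqrt{t+\lambda}}{-\gamma+2\sqrt{t+\lambda}}$, Lemma~\ref{lemC.3} evaluated at $\varphi$ reads
\[
(-\Delta_\gamma+(t+\lambda))^{-1}\varphi=(-\Delta_0+(t+\lambda))^{-1}\varphi+\frac{\gamma\,\kappa_\varphi(t)}{-\gamma+2\sqrt{t+\lambda}}\,e^{-\sqrt{t+\lambda}|x|}.
\]
Substituting back, the part $\Id-t(-\Delta_0+(t+\lambda))^{-1}=(-\Delta_0+\lambda)(t+(-\Delta_0+\lambda))^{-1}$ re-integrates, via the same subordination formula for the free operator, to $(-\Delta_0+\lambda)^{s/2}\varphi$; the remaining rank-one piece is $-t\cdot\frac{\gamma\kappa_\varphi(t)}{-\gamma+2\sqrt{t+\lambda}}e^{-\sqrt{t+\lambda}|x|}$, and since $t^{\frac s2-1}\cdot t=t^{\frac s2}$ this yields precisely $\frac{\sin(\frac\pi2 s)}{\pi}\int_0^\infty\frac{-\gamma t^{s/2}\kappa_\varphi(t)}{-\gamma+2\sqrt{t+\lambda}}e^{-\sqrt{t+\lambda}|x|}\,dt$, as claimed. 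The interchange of $\int_0^\infty dt$ with the splitting, and with the inner integration defining $\kappa_\varphi$, is justified by Fubini--Tonelli using Lemma~\ref{lemC.4}, the bound $|\kappa_\varphi(t)|\le\|\varphi\|_{L^2}\|G_{i\sqrt{t+\lambda}}\|_{L^2}\lesssim(t+\lambda)^{-3/4}\|\varphi\|_{L^2}$, and the $O(t^{\frac s2-2})$ decay of the resolvent factor on $C^\infty_c(\R\setminus\{0\})$.

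Finally, for general $\varphi\in L^2$, approximate $\varphi_n\to\varphi$ in $L^2$ with $\varphi_n\in C^\infty_c(\R\setminus\{0\})$ and pass to the limit tested against $\psi\in C^\infty_c(\R\setminus\{0\})$ (which suffices for the distributional assertion, and may be enlarged to $C^\infty_c(\R)$ when $s<3/2$): the left-hand side converges because $(-\Delta_\gamma+\lambda)^{s/2}$ is self-adjoint with $C^\infty_c(\R\setminus\{0\})\subset\mathcal{D}(-\Delta_\gamma)\subset\mathcal{D}((-\Delta_\gamma+\lambda)^{s/2})$, so $\langle(-\Delta_\gamma+\lambda)^{s/2}\varphi_n,\psi\rangle=\langle\varphi_n,(-\Delta_\gamma+\lambda)^{s/2}\psi\rangle$; the term $(-\Delta_0+\lambda)^{s/2}\varphi$ converges since $(-\Delta_0+\lambda)^{s/2}:L^2\to H^{-s}$ is bounded; and the correction integral converges by dominated convergence, using the $\kappa_\varphi$ bound above and the exponential smallness of $e^{-\sqrt{t+\lambda}|x|}$ for $x$ away from $0$. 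The point needing the most care is this last convergence when $s$ is close to $2$: there the two terms on the right-hand side are each only tempered distributions, carrying $|x|^{1-s}$-type singularities at the origin which cancel in the sum (the left-hand side being the genuinely regular object), so the identity must be read distributionally --- precisely as asserted. Everything else is routine bookkeeping.
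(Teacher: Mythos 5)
Your proposal is correct and follows essentially the same route as the paper: the Balakrishnan/subordination formula of Lemma \ref{lemC.2} for $(-\Delta_\gamma+\lambda)^{s/2}$ and $(-\Delta_0+\lambda)^{s/2}$, the identity $(-\Delta_\gamma+\lambda)\{t+(-\Delta_\gamma+\lambda)\}^{-1}=\Id-t(-\Delta_\gamma+t+\lambda)^{-1}$, and the resolvent difference from Lemma \ref{lemC.3} at $k=i\sqrt{t+\lambda}$, which produces exactly the rank-one correction with the stated coefficient. The paper's own proof is purely formal at the operator level, whereas you add a density step on $C_c^\infty(\R\setminus\{0\})$ and Fubini/dominated-convergence justifications (and correctly note that testing against all of $C_c^\infty(\R)$ is only automatic for $s<3/2$); this is extra care on top of, not a departure from, the paper's argument.
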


\begin{proof}
By Lemma \ref{lemC.2} and fractional calculus, we have
\begin{align*}
	(-\Delta_\gamma + \lambda)^{s/2} \varphi
	&= \frac{\sin\left(\frac{\pi}{2}s\right)}{\pi} \int_{0}^{\infty} t^{\frac{s}{2}-1} (-\Delta_\gamma + \lambda) \{t+(-\Delta_\gamma + \lambda)\}^{-1} \varphi dt
	\\
	&=\frac{\sin\left(\frac{\pi}{2}s\right)}{\pi} \int_{0}^{\infty} t^{\frac{s}{2}-1} \{(-\Delta_\gamma + t+ \lambda) -t\} (-\Delta_\gamma +t+ \lambda)^{-1} \varphi dt
	\\
	&=\frac{\sin\left(\frac{\pi}{2}s\right)}{\pi} \int_{0}^{\infty} t^{\frac{s}{2}-1} \{1 - t(-\Delta_\gamma +t+ \lambda)^{-1}\} \varphi dt.
\end{align*}
We also have the similar equation for $(-\Delta_0 + \lambda)^{s/2} \varphi$. Thus we get
\begin{align*}
	&(-\Delta_\gamma + \lambda)^{s/2} \varphi - (-\Delta_0 + \lambda)^{s/2} \varphi
	\\
	&=\frac{-\sin\left(\frac{\pi}{2}s\right)}{\pi} \int_{0}^{\infty} t^{\frac{s}{2}} \{ (-\Delta_\gamma +t+ \lambda)^{-1} -(-\Delta_0 +t+ \lambda)^{-1}\}  \varphi dt.
\end{align*}
By using Lemma \ref{lemC.3} as $k=i(t+\lambda)^{1/2}$, we obtain
\begin{align*}
	&(-\Delta_\gamma + \lambda)^{s/2} \varphi - (-\Delta_0 + \lambda)^{s/2} \varphi
	\\
	&=\frac{\sin\left(\frac{\pi}{2}s\right)}{\pi} \int_{0}^{\infty} \frac{-\gamma t^{\frac{s}{2}}}{-\gamma+2\sqrt{t+\lambda}} \langle \varphi , \overline{G_{i\sqrt{t+\lambda}}} \rangle e^{-\sqrt{t+\lambda}|x|} dt.
\end{align*}
This completes the proof.
\end{proof}

We set
\begin{align*}
	c_f^0(t):=\langle \overline{G_{i\sqrt{\lambda+t}}} , (-\Delta_0 + \lambda)^{s/2} f \rangle
	=\int_{\mathbb{R}}  \frac{e^{-\sqrt{\lambda+t}|y|}}{2\sqrt{\lambda+t}}(-\Delta_0+ \lambda)^{s/2} f(y)dy,
\end{align*}
for $f \in H^s(\mathbb{R})$. 

\begin{lemma}
\label{lemB.7}
Let $s>0$. For $f \in H^s(\mathbb{R})$, we have
\begin{align*}
	|c_f^0(t)|
	\lesssim_\lambda  (1+t)^{-3/4}  \|f\|_{H^s}.
\end{align*}
\end{lemma}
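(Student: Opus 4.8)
The plan is to pass to the Fourier side and then apply Cauchy--Schwarz. First I would note that $\overline{G_{i\sqrt{\lambda+t}}}=G_{i\sqrt{\lambda+t}}(x)=\tfrac{1}{2\sqrt{\lambda+t}}e^{-\sqrt{\lambda+t}|x|}$ is a real, even, $L^2(\mathbb{R})$ function, and that $(-\Delta_0+\lambda)^{s/2}f\in L^2(\mathbb{R})$ whenever $f\in H^s(\mathbb{R})$; hence the pairing defining $c_f^0(t)$ is a genuine $L^2$ inner product and Parseval's identity applies. Using Lemma \ref{lemC.4} for $\widehat{G_{i\sqrt{\lambda+t}}}$ and $\widehat{(-\Delta_0+\lambda)^{s/2}f}(\xi)=(|\xi|^2+\lambda)^{s/2}\widehat f(\xi)$, this gives
\[
c_f^0(t)=\int_{\mathbb{R}}\frac{1}{\sqrt{2\pi}\,(|\xi|^2+\lambda+t)}\,\overline{(|\xi|^2+\lambda)^{s/2}\widehat f(\xi)}\,d\xi .
\]

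Next I would split the integrand as the product of $(|\xi|^2+\lambda+t)^{-1}$ and $(|\xi|^2+\lambda)^{s/2}|\widehat f(\xi)|$ and apply Cauchy--Schwarz, obtaining
\[
|c_f^0(t)|\le \frac{1}{\sqrt{2\pi}}\left(\int_{\mathbb{R}}\frac{d\xi}{(|\xi|^2+\lambda+t)^2}\right)^{1/2}\left(\int_{\mathbb{R}}(|\xi|^2+\lambda)^{s}|\widehat f(\xi)|^2\,d\xi\right)^{1/2}.
\]
The second factor is comparable to $\|f\|_{H^s}$, since $s\ge 0$ and $\lambda$ is a fixed positive number, so $(|\xi|^2+\lambda)^{s}\approx_\lambda \langle\xi\rangle^{2s}$. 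For the first factor, the rescaling $\xi=\sqrt{\lambda+t}\,\eta$ yields $\int_{\mathbb{R}}(|\xi|^2+\lambda+t)^{-2}\,d\xi=(\lambda+t)^{-3/2}\int_{\mathbb{R}}(\eta^2+1)^{-2}\,d\eta=\tfrac{\pi}{2}(\lambda+t)^{-3/2}$. Combining the two factors gives $|c_f^0(t)|\lesssim (\lambda+t)^{-3/4}\|f\|_{H^s}$, and since $\lambda+t\gtrsim_\lambda 1+t$ this is the asserted bound.

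There is no serious obstacle here: once Parseval is justified (which only requires the two square-integrability facts recorded above), the estimate is a one-line Cauchy--Schwarz together with an explicit scaling computation of a convergent integral; in fact the argument gives the conclusion for every $s\ge 0$. The only point deserving a line of care is the norm comparison $\int_{\mathbb{R}}(|\xi|^2+\lambda)^{s}|\widehat f(\xi)|^2\,d\xi\approx \|f\|_{H^s}^2$, which is valid precisely because $\lambda$ is bounded away from $0$.
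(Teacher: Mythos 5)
Your argument is correct and is essentially the paper's proof: both reduce the bound to a single Cauchy--Schwarz inequality splitting the pairing into $\|G_{i\sqrt{\lambda+t}}\|_{L^2}\,\|(-\Delta_0+\lambda)^{s/2}f\|_{L^2}\approx(\lambda+t)^{-3/4}\|f\|_{H^s}$; the paper simply computes $\|G_{i\sqrt{\lambda+t}}\|_{L^2}^2=\int_{\mathbb{R}}\frac{e^{-2\sqrt{\lambda+t}|y|}}{4(\lambda+t)}\,dy\approx(\lambda+t)^{-3/2}$ directly in physical space, whereas you obtain the same quantity on the Fourier side via Parseval and a rescaling, which is equivalent by Plancherel.
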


\begin{proof}
By the Cauchy--Schwarz inequality, we get
\begin{align*}
	|c_f^0(t)|
	\leq \|G_{i\sqrt{\lambda+t}}\|_{L^2} \|(-\Delta_0+ \lambda)^{s/2} f\|_{L^2}=\|G_{i\sqrt{\lambda+t}}\|_{L^2} \|f\|_{H^s}.
\end{align*}
Now we have
\begin{align*}
	\|G_{i\sqrt{\lambda+t}}\|_{L^2}^2
	=\int_{\mathbb{R}} \frac{e^{-2\sqrt{\lambda+t}|y|}}{4(\lambda+t)} dy
	\approx (\lambda+t)^{-3/2}. 
\end{align*}
This shows the result. 
\end{proof}

For $F\in H^s(\mathbb{R})$, we set
\begin{align*}
	I_F :=\frac{\sin\left(\frac{\pi}{2}s\right)}{\pi} \int_{0}^{\infty} \frac{-\gamma t^{\frac{s}{2}}\kappa_F(t)}{-\gamma+2\sqrt{t+\lambda}}  e^{-\sqrt{t+\lambda}|x|} dt 
\end{align*}
and then
$(-\Delta_\gamma + \lambda)^{s/2} F = (-\Delta_0 + \lambda)^{s/2} F +I_F$ by Lemma \ref{lemB.6}.
We show that $I_F \in L^2(\mathbb{R})$. 

\begin{lemma}
\label{lemC.8}
Let $s\in (0,3/2)$. 
We have $\|I_F\|_{L^2} \lesssim \|F\|_{H^s}$.
\end{lemma}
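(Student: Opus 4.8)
The plan is to estimate $I_F$ on the Fourier side, the one subtlety being to carry out the $t$-integration before applying Cauchy--Schwarz in the frequency variable. First I would apply Minkowski's integral inequality to the integral defining $I_F$, using the elementary identity $\|e^{-\sqrt{t+\lambda}|\cdot|}\|_{L^2(\R)}=(t+\lambda)^{-1/4}$, to reduce to
\[
\|I_F\|_{L^2}\lesssim\int_0^\infty\frac{|\gamma|}{|-\gamma+2\sqrt{t+\lambda}|}\,t^{s/2}\,|\kappa_F(t)|\,(t+\lambda)^{-1/4}\,dt .
\]
Since $\lambda>0$ when $\gamma<0$ and $\lambda>\gamma^2/4$ when $\gamma>0$, in either case one checks that $|-\gamma+2\sqrt{t+\lambda}|\gtrsim_{\gamma,\lambda}\sqrt{1+t}$, so the first factor is $\lesssim(1+t)^{-1/2}$. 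Next, Plancherel together with Lemma~\ref{lemC.4} gives $\kappa_F(t)=\langle F,\overline{G_{i\sqrt{t+\lambda}}}\rangle=(2\pi)^{-1/2}\int_\R\widehat F(\xi)\,(\xi^2+\lambda+t)^{-1}\,d\xi$, whence $|\kappa_F(t)|\le(2\pi)^{-1/2}\int_\R|\widehat F(\xi)|\,(\xi^2+\lambda+t)^{-1}\,d\xi$. Substituting and interchanging the order of integration by Tonelli's theorem,
\[
\|I_F\|_{L^2}\lesssim\int_\R|\widehat F(\xi)|\,m(\xi)\,d\xi ,\qquad m(\xi):=\int_0^\infty\frac{(1+t)^{-1/2}(t+\lambda)^{-1/4}\,t^{s/2}}{\xi^2+\lambda+t}\,dt .
\]

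The main step is the pointwise bound $m(\xi)\lesssim_{\gamma,\lambda,s}(\xi^2+\lambda)^{s/2-3/4}$ for $0<s<3/2$. Writing $a:=\xi^2+\lambda\ge\lambda$ and using $(1+t)^{-1/2}(t+\lambda)^{-1/4}\lesssim_\lambda(1+t)^{-3/4}$, I would split $\int_0^\infty=\int_0^1+\int_1^a+\int_a^\infty$ (the bounded range $a\le1$, which arises only if $\lambda<1$, being elementary since then $\xi$ ranges over a bounded set): the integrand is $\lesssim t^{s/2}/a$ on $[0,1]$, $\lesssim t^{s/2-3/4}/a$ on $[1,a]$ (because $(1+t)^{3/4}\ge t^{3/4}$ and $a+t\ge a$), and $\lesssim t^{s/2-7/4}$ on $[a,\infty)$; the hypothesis $s>0$ keeps $\int_0^1 t^{s/2}\,dt$ and $\int_1^a t^{s/2-3/4}\,dt\lesssim a^{s/2+1/4}$ finite, and $s<3/2$ (i.e.\ $s/2-7/4<-1$) makes $\int_a^\infty t^{s/2-7/4}\,dt\lesssim a^{s/2-3/4}$ converge, so each of the three pieces is $\lesssim a^{s/2-3/4}$. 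Granting this, Cauchy--Schwarz in $\xi$ concludes the proof:
\[
\|I_F\|_{L^2}\lesssim\int_\R\bigl(|\widehat F(\xi)|(\xi^2+\lambda)^{s/2}\bigr)(\xi^2+\lambda)^{-3/4}\,d\xi\le\Bigl(\int_\R|\widehat F(\xi)|^2(\xi^2+\lambda)^s\,d\xi\Bigr)^{\!1/2}\Bigl(\int_\R(\xi^2+\lambda)^{-3/2}\,d\xi\Bigr)^{\!1/2}\lesssim_\lambda\|F\|_{H^s},
\]
using that $\int_\R(\xi^2+\lambda)^{-3/2}\,d\xi<\infty$ and $(\xi^2+\lambda)^{s/2}\approx_\lambda(1+\xi^2)^{s/2}$; note that the exponent $s$ cancels, so the argument is uniform over $s\in(0,3/2)$.

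The only genuine obstacle is the bound for $m(\xi)$, and the point to get right there is the order of operations: integrate in $t$ first and apply Cauchy--Schwarz in $\xi$ only at the very end. The more naive route — bounding $|\kappa_F(t)|\lesssim\|F\|_{H^s}$ pointwise in $t$ via Cauchy--Schwarz in $\xi$, then integrating in $t$ — does also work, but it forces a case distinction according to whether $s<1/2$ or $s\ge1/2$, with a logarithmic factor at the endpoint $s=1/2$. One should also record in passing that the integral defining $I_F$ converges absolutely (which is confirmed a posteriori by the estimates above), so that the manipulations are legitimate.
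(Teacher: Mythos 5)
Your proof is correct, and it follows a genuinely different route from the paper's. The paper works throughout on the Fourier side of $I_F$ itself: it writes $\|I_F\|_{L^2}^2=\|\widehat{I_F}\|_{L^2}^2$, bounds $|\kappa_F(t)|$ \emph{pointwise in $t$} by $\|F\|_{H^s}(\lambda+t)^{-(1-a)}$ via Cauchy--Schwarz in the frequency variable $\eta$ (with a splitting parameter $a>\max\{0,\tfrac{1-2s}{4}\}$), then introduces a second parameter $b>\tfrac14$ to decouple $\xi$ from $t$ in the kernel $\tfrac{\sqrt{\lambda+t}}{\xi^2+\lambda+t}$, and closes by checking $\tfrac s2-(2-a-b)<-1$, which is where $s<3/2$ enters; the choice $a=\max\{0,\tfrac{1-2s}{4}\}+\varepsilon$ is in effect the case distinction at $s=1/2$ that you flag in your closing remark about the ``naive route.'' You instead apply Minkowski's inequality in $x$ (using $\|e^{-\sqrt{t+\lambda}|\cdot|}\|_{L^2}=(t+\lambda)^{-1/4}$), keep $\kappa_F(t)$ as an integral of $|\widehat F|$ against $(\xi^2+\lambda+t)^{-1}$, swap the order of integration, and carry out the $t$-integration exactly, obtaining the single kernel bound $m(\xi)\lesssim(\xi^2+\lambda)^{s/2-3/4}$ before applying Cauchy--Schwarz in $\xi$ once at the end. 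What your version buys is the elimination of the two auxiliary interpolation parameters and of any case split in $s$: the roles of the endpoints are transparent ($s<3/2$ gives convergence of the tail $\int_a^\infty t^{s/2-7/4}\,dt$, and the final $\xi$-integral $\int(\xi^2+\lambda)^{-3/2}\,d\xi$ is independent of $s$), so the argument is uniform on $(0,3/2)$; the paper's version is more mechanical but requires tracking the admissible $(a,b)$ window. All the individual steps you use check out: the lower bound $|-\gamma+2\sqrt{t+\lambda}|\gtrsim\sqrt{1+t}$ is valid in both sign cases because $\lambda>\gamma^2/4$ when $\gamma>0$, the Plancherel identity for $\kappa_F$ is exactly the paper's Lemma \ref{lemC.4}, and the three-piece estimate of $m(\xi)$ is correct (the low and middle pieces even allow $s>-1/2$). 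Your parenthetical about absolute convergence, justifying Minkowski and Tonelli, is the right thing to record.
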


\begin{proof}
We have
\begin{align*}
	\|I_F\|_{L^2}^2
	&=\|\widehat{I_F}\|_{L^2}^2
	\\
	&=\int_{\mathbb{R}}\left| \frac{\sin\left(\frac{\pi}{2}s\right)}{\pi} \int_{0}^{\infty} \frac{-\gamma t^{\frac{s}{2}}}{-\gamma+2\sqrt{t+\lambda}} \kappa_F(t) \frac{2\sqrt{\lambda+t}}{\sqrt{2\pi}(|\xi|^2+\lambda+t)} dt \right|^2d\xi
	\\
	&\lesssim \int_{\mathbb{R}} \left( \int_{0}^{\infty} \frac{ t^{\frac{s}{2}}}{-\gamma+2\sqrt{t+\lambda}} |\kappa_F(t)| \frac{\sqrt{\lambda+t}}{|\xi|^2+\lambda+t} dt \right)^2 d\xi
\end{align*}
Here, for $a \in [0,1]$, we have
\begin{align*}
	|\kappa_F(t)| 
	&= |\langle F , \overline{G_{i\sqrt{t+\lambda}}} \rangle| 
	\\
	&\approx \left| \int_{\mathbb{R}} \widehat{F}(\eta)  \frac{1}{|\eta|^2+\lambda+t} d\eta \right|
	\\
	&\lesssim  \int_{\mathbb{R}} (|\eta|^2+\lambda)^{s/2}|\widehat{F}(\eta)|  \frac{(|\eta|^2+\lambda)^{-s/2}}{|\eta|^2+\lambda+t} d\eta 
	\\
	&\lesssim  \int_{\mathbb{R}} (|\eta|^2+\lambda)^{s/2}|\widehat{F}(\eta)|  \frac{(|\eta|^2+\lambda)^{-s/2}}{(|\eta|^2+\lambda)^a(\lambda+t)^{1-a}} d\eta 
	\\
	&\lesssim  \frac{1}{(\lambda+t)^{1-a}} \int_{\mathbb{R}} (|\eta|^2+\lambda)^{s/2}|\widehat{F}(\eta)| (|\eta|^2+\lambda)^{-s/2-a} d\eta 
	\\
	&\lesssim  \frac{1}{(\lambda+t)^{1-a}} \|F\|_{H^s} \left( \int_{\mathbb{R}}(|\eta|^2+\lambda)^{-s-2a} d\eta \right)^{\frac{1}{2}}.
\end{align*}
The integral of the last term is finite if $a> \frac{1-2s}{4}$. By taking $a\in [\max\{0,\frac{1-2s}{4}\}, 1]$ (since $s>0$, we can choose such a power $a$), then we get
\begin{align*}
	&\int_{\mathbb{R}} \left( \int_{0}^{\infty} \frac{ t^{\frac{s}{2}}}{-\gamma+2\sqrt{t+\lambda}} |\kappa_F(t)| \frac{\sqrt{\lambda+t}}{|\xi|^2+\lambda+t} dt \right)^2 d\xi
	\\
	&\lesssim \int_{\mathbb{R}} \left( \int_{0}^{\infty} \frac{ t^{\frac{s}{2}} }{-\gamma+2\sqrt{t+\lambda}}  \frac{\|F\|_{H^s}}{(\lambda+t)^{1-a}}  \frac{\sqrt{\lambda+t}}{|\xi|^2+\lambda+t}  dt \right)^2  d\xi
	\\
	&\lesssim \int_{\mathbb{R}} \left( \int_{0}^{\infty} \frac{ t^{\frac{s}{2}}  }{-\gamma+2\sqrt{t+\lambda}}  \frac{\|F\|_{H^s}}{(\lambda+t)^{1-a}}  \frac{\sqrt{\lambda+t}}{(|\xi|^2+\lambda)^b(\lambda+t)^{1-b}}  dt \right)^2  d\xi
	\\
	&\lesssim  \left( \int_{0}^{\infty} \frac{ t^{\frac{s}{2}} }{-\gamma+2\sqrt{t+\lambda}}  \frac{\sqrt{\lambda+t}}{(\lambda+t)^{2-a-b}}    dt 
	\right)^2 \int_{\mathbb{R}} \frac{1}{(|\xi|^2+\lambda)^{2b}}  d\xi \|F\|_{H^s}^2
\end{align*}
for $b \in [0,1]$. If $b>1/4$, then the integral with respect to $\xi$ is finite. Moreover, if $\frac{s}{2}-(2-a-b)<-1$, then  the integral with respect to $t$ is finite. 
Taking $a=\max\{0,\frac{1-2s}{4}\} +\varepsilon$ and $b=\frac{1}{4}+\varepsilon$, where $\varepsilon >0$ is sufficiently small, then we get
\begin{align*}
	\frac{s}{2}-(2-a-b)
	=\max\left\{\frac{2s-7}{4}, -\frac{3}{2} \right\} +2\varepsilon
	<-1
\end{align*}
since $s<3/2$. Thus, we get the statement.

\end{proof}

Finally, we give a proof of Theorem \ref{thmC.1}. 

\begin{proof}[Proof of Theorem \ref{thmC.1}]
Let $F \in H^s(\mathbb{R}).$ Lemma \ref{lemC.8} shows that 
\begin{align*}
	(-\Delta_\gamma + \lambda)^{s/2} F = (-\Delta_0 + \lambda)^{s/2} F +I_F \in L^2(\mathbb{R})
\end{align*}
and thus $F\in\mathcal{D}((-\Delta_\gamma + \lambda)^{s/2})$. This means that $H^s(\mathbb{R}) \subset \mathcal{D}((-\Delta_\gamma + \lambda)^{s/2})$. 

Next we show $H^s(\mathbb{R}) \supset \mathcal{D}((-\Delta_\gamma + \lambda)^{s/2})$. Let $g \in \mathcal{D}((-\Delta_\gamma + \lambda)^{s/2})$. Then, by Lemma \ref{lemB.5}, we have $g=\mathcal{A}(g)+\mathcal{B}(g)$. Now, letting 
\begin{align*}
	\mathcal{C}(f):=\frac{2\gamma\sin\left(\frac{\pi}{2}s\right)}{\pi} \int_{0}^{\infty} t^{-\frac{s}{2}}  \frac{\sqrt{\lambda+t}}{-\gamma +2\sqrt{\lambda+t}}
	c_f^0(t) G_{i\sqrt{\lambda+t}} dt
\end{align*}
for $f \in H^s(\mathbb{R})$, we have $\mathcal{B}(g)= \mathcal{C}(\mathcal{A}(g))$. Indeed, this follows from 
\begin{align*}
	c_{g}^\gamma(t)
	=\langle (-\Delta_\gamma + \lambda)^{s/2} g, \overline{G_{i\sqrt{\lambda+t^{-1}}}}  \rangle
	=\langle  (-\Delta_0 + \lambda)^{s/2}\mathcal{A}(g), \overline{G_{i\sqrt{\lambda+t^{-1}}}}  \rangle
	=c_{\mathcal{A}(g)}^0(t).
\end{align*}
Thus we have $g=\mathcal{A}(g)+\mathcal{C}(\mathcal{A}(g))$, where we note that $\mathcal{A}(g) \in H^s(\mathbb{R})$. For $f \in H^s(\mathbb{R})$, we have $\mathcal{C}(f) \in H^s(\mathbb{R})$. Indeed,  by Lemma \ref{lemB.7}, we have 
\begin{align*}
	\|\mathcal{C}(f)\|_{H^s}^2
	&=\int_{\mathbb{R}} |(|\xi|^2+\lambda)^{s/2}\widehat{\mathcal{C}(f)}(\xi)|^2 d\xi
	\\
	&\lesssim \int_{\mathbb{R}} (|\xi|^2+\lambda)^{s}\left|\int_{0}^{\infty} t^{-\frac{s}{2}}  \frac{\sqrt{\lambda+t}}{ -\gamma +2\sqrt{\lambda+t}}
	c_f^0(t) \widehat{G_{i\sqrt{\lambda+t}}}(\xi) dt\right|^2 d\xi
	\\
	&\lesssim \int_{\mathbb{R}} (|\xi|^2+\lambda)^{s}\left|\int_{0}^{\infty} t^{-\frac{s}{2}}  \frac{\sqrt{\lambda+t}}{ -\gamma +2\sqrt{\lambda+t}}
	c_f^0(t) \frac{1}{(|\xi|^2+\lambda+t)} dt\right|^2 d\xi
	\\
	&\lesssim \int_{\mathbb{R}} (|\xi|^2+\lambda)^{s}  \left|\int_{0}^{\infty} t^{-\frac{s}{2}}  \frac{\sqrt{\lambda+t}}{ -\gamma +2\sqrt{\lambda+t}}
	c_f^0(t) \frac{1}{(|\xi|^2+\lambda)^a(\lambda+t)^{1-a}} dt\right|^2 d\xi
	\\
	&\lesssim \int_{\mathbb{R}} (|\xi|^2+\lambda)^{s-2a} d\xi\left|\int_{0}^{\infty} t^{-\frac{s}{2}} \frac{\sqrt{\lambda+t}}{ -\gamma +2\sqrt{\lambda+t}}
	c_f^0(t) (\lambda+t)^{-1+a}dt\right|^2
	\\
	&\lesssim  \int_{\mathbb{R}} (|\xi|^2+\lambda)^{s-2a} d\xi \left|\int_{0}^{\infty} t^{-\frac{s}{2}}c_f^0(t)(\lambda+t)^{-1+a} dt\right|^2 
	\\
	&\lesssim  \int_{\mathbb{R}} (|\xi|^2+\lambda)^{s-2a} d\xi \left|\int_{0}^{\infty} t^{-\frac{s}{2}}(1+t)^{-\frac{3}{4}-1+a} dt\right|^2 \|f\|_{H^s}^2,
\end{align*}
where $0\leq a\leq 1$. 
Take $a=\frac{2s+9}{12}$, which is in the interval $(3/4,1)$ for $s\in (0,3/2)$. Then we have
\begin{align*}
	2s-4a
	= \frac{4}{3}s -3 < -1,
	\quad
	-\frac{s}{2}-\frac{3}{4}-1+a 
	=-\frac{1}{3}s -1 <-1,
	\text{ and }
	 -\frac{s}{2} > -1
\end{align*}
for any $s\in (0,3/2)$ and thus the integrals for $\xi$ and $t$ are finite. This implies 
\begin{align*}
	\|\mathcal{C}(f)\|_{H^s}^2 \lesssim \|f\|_{H^s}^2.
\end{align*}
Therefore, we get $g = \mathcal{A}(g) +\mathcal{C}(\mathcal{A}(g))  \in H^s$. 
This means $ \mathcal{D}((-\Delta_\gamma + \lambda)^{s/2}) \subset H^s$.


At last, we show the norm equivalency. 
Let $g\in \mathcal{D}((-\Delta_\gamma + \lambda)^{s/2}) = H^s$. 
Since we have $g=\mathcal{A}(g)+\mathcal{C}(\mathcal{A}(g))$ and $\|\mathcal{C}(f)\|_{H^s} \lesssim \|f\|_{H^s}$, we obtain
\begin{align*}
	\|g\|_{H^s} \leq \|\mathcal{A}(g)\|_{H^s} + \|\mathcal{C}(\mathcal{A}(g))\|_{H^s} \lesssim  \|\mathcal{A}(g)\|_{H^s}.
\end{align*}
We have 
\begin{align*}
	\|\mathcal{A}(g)\|_{H^s}
	=\|(-\Delta_0 + \lambda)^{s/2}\mathcal{A}(g)\|_{L^2}
	=\|(-\Delta_\gamma + \lambda)^{s/2}g\|_{L^2}
	=\|g\|_{H_\gamma^s}
\end{align*}
and thus we get $\|g\|_{H^s} \lesssim \|g\|_{H_\gamma^s}$. 
Moreover, by Lemma \ref{lemC.8}, we also have
\begin{align*}
	\|g\|_{H_\gamma^s}
	=\|(-\Delta_\gamma + \lambda)^{s/2}g\|_{L^2}
	\leq \|(-\Delta_0 + \lambda)^{s/2}g\|_{L^2} + \|I_g\|_{L^2}
	\lesssim  \|g\|_{H^s}.
\end{align*}
This completes the proof. 
\end{proof}

\subsection{Sketch of the proof of local well-posedness}

We give a sketch proof of the local well-posedness of \eqref{deltaNLS} in $H^{s}$ for $s\in (1/2,1)$. 

By Duhamel's formula, we have
\begin{align*}
	\|u(t)\|_{H^s}
	\lesssim \|e^{it\Delta_\gamma}u_0\|_{H^s} + \|\int_{0}^{t} e^{i(t-\tau)\Delta_\gamma}(|u|^{p-1}u)(\tau)d\tau\|_{H^s}
\end{align*}
By the equivalency, i.e., Theorem \ref{thmC.1}, we get
\begin{align*}
	\|u(t)\|_{H^s}
	&\lesssim \|e^{it\Delta_\gamma}u_0\|_{H_\gamma^s} + \|\int_{0}^{t} e^{i(t-\tau)\Delta_\gamma}(|u|^{p-1}u)(\tau)d\tau\|_{H_\gamma^s}
	\\
	&\lesssim \|e^{it\Delta_\gamma}(-\Delta_\gamma + \lambda)^{s/2}u_0\|_{L^2} + \|\int_{0}^{t} e^{i(t-\tau)\Delta_\gamma}(-\Delta_\gamma + \lambda)^{s/2}(|u|^{p-1}u)(\tau)d\tau\|_{L^2}
	\\
	&\lesssim \|u_0\|_{H^s} + \|(-\Delta_\gamma + \lambda)^{s/2}(|u|^{p-1}u)\|_{L_t^1L^2}
	\\
	&\lesssim \|u_0\|_{H^s} + \||u|^{p-1}u\|_{L_t^1H^s}.
\end{align*}
By the fractional chain rule (see e.g. \cite[Lemma A.9]{Tao06} 
and \cite[Lemma 2.3 (2.15)]{HiWa19}) and the Sobolev embedding $H^s \subset L^\infty$ for $s>1/2$, 
\begin{align*}
	\|u(t)\|_{H^s}&\lesssim \|u_0\|_{H^s} + T\|u\|_{L_t^\infty L^\infty}^{p-1}\|u\|_{L_t^\infty H^s}
	\\
	&\lesssim \|u_0\|_{H^s} + T\|u\|_{L_t^\infty H^s}^{p}.
\end{align*}
This gives the local well-posedness for $H^s$, where $1/2<s<1$. 

\section{$v_{k}\neq0$ for any $k$}
\label{appC}

As stated in Remark \ref{rmk3.1}, we need a small modification in the case that $v_{k}\neq0$ for any $k$. Now, let $K\geq 1$ and $v_{k}\neq0$ for any $k \in \llbracket 1,K\rrbracket$. We assume that 
\begin{align*}
	v_1<...<v_k<0<v_{k+1}<...<v_K. 
\end{align*}
To the velocity family $\{v_1,...,v_K\}$, we add virtual velocity $0$. 
By renumbering, we may assume that $K \geq 2$ and $v_1<...<v_{k_0-1}<v_{k_0}=0<v_{k_0+1}<...<v_K$. Then we apply the same cut-off argument with the velocity family $\{v_1,...,v_{k_0},...,v_K\}$. 
By this trick, we do not need to calculate the time derivative of $\mathcal{P}_
{k_0}$ as in the case that there exists $k_0$ such that $v_{k_0}=0$. In the coercivity argument we modify the proof as follows: Now, we have 
\begin{align*}
	\mathcal{H}_{\gamma}(w)
	& =\sum_{k\neq k_{0}} \mathcal{I}_{k} 
	-\gamma |w(t,0)|^{2} 
	+\int_{\mathbb{R}} \psi_{k_{0}} (|\partial_{x}w(t,x)|^{2} + |w(t,x)|^{2} )dx + O(e^{-3c_0t}\|w\|_{H^1}^2)
	\\
	& \geq \sum_{k\neq k_{0}} \mathcal{I}_{k} 
	+\int_{\mathbb{R}} \psi_{k_{0}} (|\partial_{x}w(t,x)|^{2} + |w(t,x)|^{2} )dx+ O(e^{-3c_0t}\|w\|_{H^1}^2),
\end{align*}
where $\mathcal{I}_k$ is same as in the proof of Lemma \ref{lem3.12}. Since $\mathcal{I}_k$ can be estimated in the same way, we get
\begin{align*}
	\mathcal{I}_k \geq c \int \psi_k(|\partial_x w|^2+|w|^2) dx - \frac{C}{L}\|w\|_{L^2}^2 -c^{-1} \text{Ortho}_k. 
\end{align*}
Therefore, by $\sum_{k}\psi_k=1$, we obtain 
\begin{align*}
	\mathcal{H}_{\gamma}(w)
	&\geq c\sum_{k\neq k_{0}}\int \psi_k(|\partial_x w|^2+|w|^2) dx - \frac{C}{L}\|w\|_{L^2}^2 -c^{-1} \sum_{k\neq k_{0}}\text{Ortho}_k 
	\\
	&\quad + \int_{\mathbb{R}} \psi_{k_{0}} (|\partial_{x}w(t,x)|^{2} + |w(t,x)|^{2} )dx+ O(e^{-3c_0t}\|w\|_{H^1}^2)
	\\
	&\geq  c\|w\|_{H^1}^2 - \frac{C}{L}\|w\|_{L^2}^2 +O(e^{-c_0t}\|w\|_{H^1}) + O(e^{-3c_0t}\|w\|_{H^1}^2).
\end{align*}
This means that we have
\begin{align*}
	c\|w\|_{H^1}^2 \leq \mathcal{H}_{\gamma}(w) +  O(e^{-3c_0t}). 
\end{align*}
The rest of the arguments are same as in the case that there exists $k_0$ such that $v_{k_0}=0$.



\section*{Acknowledgement}

Research of the first author is partially supported
by an NSERC Discovery Grant. 
The second author is supported by JSPS KAKENHI Grant-in-Aid for Early-Career Scientists No. JP18K13444. He also expresses deep appreciation toward JSPS Overseas Research Fellowship and the University of British Columbia for stay by the fellowship. 
The third author is supported by Grant-in-Aid for JSPS Fellows JP23KJ1416.



\end{document}